\documentclass{article}

\usepackage[margin=1.18in]{geometry}
\usepackage{setspace}

\usepackage[T1]{fontenc}
\usepackage[utf8]{inputenc}
\usepackage{microtype}
\usepackage{xcolor}
\usepackage{float}
\usepackage{placeins}
\usepackage{caption}

\usepackage{booktabs}
\usepackage{
  amsmath,
  amssymb,
  amsthm,
  mathtools,
  mathrsfs
}

\numberwithin{equation}{section}

\newtheorem{theorem}{Theorem}[section]
\newtheorem{proposition}[theorem]{Proposition}
\newtheorem{lemma}[theorem]{Lemma}
\newtheorem{corollary}[theorem]{Corollary}

\theoremstyle{definition}
\newtheorem{definition}[theorem]{Definition}

\theoremstyle{remark}
\newtheorem{remark}[theorem]{Remark}

\usepackage{enumitem}

\usepackage{graphicx}
\usepackage{placeins}
\usepackage{flafter}


\DeclareMathOperator{\diag}{diag}

\newcommand{\Sph}{\mathbb{S}}
\newcommand{\R}{\mathbb{R}}
\newcommand{\Cbb}{\mathbb{C}}

\newcommand{\Z}{\mathbb{Z}}

\newcommand{\dd}{\,\mathrm{d}}

\newcommand{\iso}{\mathrm{iso}}

\newcommand{\Nspace}{\mathcal{N}}
\newcommand{\Bfun}{\mathrm{B}}

\newcommand{\kap}{\kappa}

\newcommand{\SO}{\mathrm{SO}}

\usepackage{xurl}
\usepackage{hyperref}

\hypersetup{
  colorlinks = true,
  linkcolor  = black,
  citecolor  = black,
  urlcolor   = black
}

\DeclareRobustCommand{\doilink}[1]{%
  \href{https://doi.org/#1}{doi:\nolinkurl{#1}}%
}

\title{A Deformation Approach to Axially Symmetric Kernels on the Sphere:
Harmonic Coupling and Regularity}

\author{%
Janin Jäger,
Fabian Stallbauer\thanks{\texttt{fabian.stallbauer@ku.de}}\\[0.5em]
\small Catholic University of Eichstätt--Ingolstadt, Germany
}
\date{}
\begin{document}

\maketitle

%% ============================================================
%% Abstract
%% ============================================================

\begin{abstract}

In this paper, we construct axially symmetric kernels on the sphere.
We start by taking an isotropic kernel and transform it into an axially symmetric kernel via a specific deformation.
For a general deformation, an explicit spherical-harmonic
representation is not necessarily available. For our deformation, we can obtain
such a representation of the deformed kernel in terms of the expansion coefficients
of the original isotropic kernel. We derive this representation by explicitly describing the effect of the deformation on the spherical harmonic basis. From this, an explicit finite
sum formula for the coupling coefficients is deduced and we show that, for each fixed
input mode, the coefficients decay geometrically at a rate controlled by
our deformation parameter. We further note that if the native space of the
isotropic kernel is norm equivalent to a Sobolev-space, then the deformed kernel has the same native space with an equivalent norm. In our numerical experiments,
we show that our deformation can improve prediction when the underlying
random field has an axially symmetric covariance.

\end{abstract}

\medskip
\noindent\textbf{Keywords.}
Axial symmetry; positive definite kernels on the sphere;
spherical harmonics; kernel deformation; Sobolev spaces;
kernel interpolation.

\medskip
\noindent\textbf{Mathematics Subject Classification.}
Primary 42C10; Secondary 33C55, 41A05, 46E22.
\medskip

%% ============================================================
%% Section: Introduction
%% ============================================================
\section{Introduction}
\label{sec:introduction}

{
Positive definite kernels play a central role in harmonic analysis and
spherical approximation theory \cite{AtkinsonHan2012,DaiXu2013}, as well as
in statistics, where they serve as covariance functions, e.g. for global
spatial data \cite{JeongJunGenton2017}. In the latter setting, the curvature
of the domain under consideration is essential, since replacing Euclidean
distance by great-circle distance preserves positive definiteness for
some models but not for others. Examples are discussed in
\cite{HuangZhangRobeson2011}. A standard setting is to assume isotropy of the kernel.
Such kernels depend only on the spherical distance between their arguments.

For continuous isotropic kernels on \(\Sph^2\), positive definiteness
is equivalent to having nonnegative Legendre coefficients
\cite{Schoenberg1942}. For strict positive definiteness, additional
conditions on the positive coefficients are needed
\cite{ChenMenegattoSun2003,Gneiting2013}.
In this isotropic setting, the resulting coefficient structure in a
spherical-harmonic basis is diagonal. Furthermore, the decay of those
spectral coefficients also controls the regularity of the associated
native space, which for important classes of spherical kernels is
equivalent to a spherical Sobolev space \cite{HubbertLeGiaMorton2015}.
}

{
Nevertheless, full isotropy can be too restrictive when the data have
a preferred direction. An intermediate model class consists of axially symmetric
kernels, which are invariant under rotations about one axis. Their harmonic
description dates back to \cite{Jones1963}, where the diagonal isotropic
structure is replaced by a coefficient block structure. Related work
gives similar representations and formulations
\cite{HuangZhangRobeson2012,BuhmannJaeger2022IMA} and studies conditions
for strict positive definiteness and other properties of axially
symmetric kernels \cite{BissiriPeronPorcu2020}.

For particular model classes, explicit spherical-harmonic representations
are also available. Hitczenko and Stein
\cite{HitczenkoStein2012} consider Gaussian processes whose covariance is
invariant under shifts in longitude and which are obtained by applying
first-order differential operators to an isotropic process.
}

{
Another way to construct such kernels is to use a suitable deformation
of the sphere and evaluate an isotropic kernel at the transformed points
\cite{PorcuSenoussiMendozaBevilacqua2020}. This, however, does not generally
give us explicit formulas for the corresponding spherical-harmonic
coefficients, which are useful tools for truncated approximations and
field simulations \cite{PorcuCastruccioAlegriaCrippa2019}.
Our construction gives both: an explicit spherical-harmonic series for the deformed kernel and a closed-form expression whenever one is available for the original kernel.
}

{
Sampson and Guttorp \cite{SampsonGuttorp1992} used domain deformations to model nonstationary dependence. A similar principle appears in approximation theory, where transformations of points or bases underlie anisotropically transformed radial basis functions \cite{BeatsonDavydovLevesley2010}, as well as mapped bases and fake-node constructions \cite{DeMarchiMarchettiPerracchionePoggiali2020MappedBases,DeMarchiMarchettiPerracchionePoggiali2021FakeNodes}.
}
{
Here, we study a normalized axial deformation obtained by scaling points
along a chosen axis in \(\R^3\) and normalizing them back to \(\Sph^2\).
With this deformation, we can evaluate the kernel directly, derive its
spherical-harmonic representation, and study properties of the kernel
and its native space. The paper is organized as follows.

Section 2 introduces the definitions and background for isotropic and
axially symmetric kernels. In Section 3, we start with the basic properties of pullback kernels, using standard kernel theory to study positive definiteness and native spaces. Section 4 then introduces our deformation. We derive the harmonic coupling coefficients, estimate how quickly they decay, and extend the construction to an arbitrary axis. We also show that the deformation preserves the Sobolev regularity of the native space and explain how interpolation error bounds carry over from the original kernel. Section 5 gives examples of the deformed kernels and compares their predictive performance with that of isotropic kernels, both when the covariance model is correctly specified and when it is misspecified.
}

%% ============================================================
%% Section: Preliminaries
%% ============================================================
\section{Preliminaries}
\label{sec:framework}

This section introduces the notation required for the harmonic representations studied in this paper including spherical harmonics, Sobolev
spaces, and positive definite kernels with their associated native spaces.

%% ------------------------------------------------------------
%% Subsection: Spherical harmonics and Sobolev spaces
%% ------------------------------------------------------------
\subsection{Spherical harmonics and Sobolev spaces}
\label{subsec:harmonics-sobolev}

We use the definitions of the associated Legendre
functions and complex spherical harmonics as in
\cite[Eqs.~14.7.8 and~14.30.1]{NIST:DLMF}.
For integers \(\ell\ge0\) and \(0\le m\le\ell\), define
\[
 P_\ell^m(x)
 =
 (-1)^m(1-x^2)^{m/2}
 \frac{d^m}{dx^m}P_\ell(x),
 \qquad -1\leq x\leq 1,
\]
where \(P_\ell\) is the Legendre polynomial of degree \(\ell\).

{
We use the standard Cartesian realization
\[
 \Sph^2
 =
 \{s=(s_1,s_2,s_3)\in\R^3:\|s\|_2=1\}.
\]
For harmonic analysis, we use the spherical parametrization
\[
 s=\omega(L,\lambda)
 :=
 \bigl(
 \sin L\cos\lambda,\,
 \sin L\sin\lambda,\,
 \cos L
 \bigr),
\]
where \(L\in[0,\pi]\) is the polar angle and
\(\lambda\in[0,2\pi)\) is the longitude. For
\(s=\omega(L,\lambda)\) and \(0\le m\le\ell\), set
\begin{equation}
\label{eq:Ylm-def-positive}
 Y_{\ell m}(s)
 =
 \mathcal N_{\ell m}
 P_\ell^m(\cos L)e^{im\lambda},
\end{equation}
}
where
\begin{equation}
\label{eq:Ylm-normalization}
 \mathcal N_{\ell m}
 =
 \left(
 \frac{2\ell+1}{4\pi}
 \frac{(\ell-m)!}{(\ell+m)!}
 \right)^{1/2}.
\end{equation}
For negative orders, we use
\begin{equation}
\label{eq:Ylm-negative}
 Y_{\ell,-m}
 =
 (-1)^m\overline{Y_{\ell m}},
 \qquad 1\le m\le\ell.
\end{equation}

Let \(d\omega\) be the standard surface-area measure on \(\Sph^2\), so that
\[
\int_{\Sph^2} 1\,\dd\omega = 4\pi.
\]
The inner product on \(L^2(\Sph^2)\) is
\[
 \langle f,g\rangle_{L^2(\Sph^2)}
 =
 \int_{\Sph^2}
 f(s)\overline{g(s)}\,\dd\omega(s).
\]
With this normalization,
\[
 \{Y_{\ell m}:\ell\ge0,\ -\ell\le m\le\ell\}
\]
is an orthonormal basis of \(L^2(\Sph^2)\). Each \(Y_{\ell m}\) is an
eigenfunction of the Laplace--Beltrami operator:
\[
 -\Delta_{\Sph^2}Y_{\ell m}
 =
 \Lambda_\ell Y_{\ell m},
 \qquad
 \Lambda_\ell=\ell(\ell+1).
\]

For \(f\in L^2(\Sph^2)\), define the Fourier--Laplace coefficients by
\[
 \widehat f_{\ell m}
 =
 \langle f,Y_{\ell m}\rangle_{L^2(\Sph^2)}.
\]
For \(\tau\ge0\), the Sobolev space \(H^\tau(\Sph^2)\) consists of all
functions for which
\begin{equation}
\label{eq:sobolev-S2}
 \|f\|_{H^\tau(\Sph^2)}^2
 =
 \sum_{\ell=0}^{\infty}
 \sum_{m=-\ell}^{\ell}
 (1+\Lambda_\ell)^\tau
 |\widehat f_{\ell m}|^2
 <\infty.
\end{equation}
This is the standard spectral definition of Sobolev spaces on the sphere;
see \cite{HubbertLeGiaMorton2015}. For \(\tau>0\), the spaces
\(H^{-\tau}(\Sph^2)\) are defined by duality; see \cite{Taylor2011}.

%% ------------------------------------------------------------
%% Subsection: Isotropic kernels and native spaces
%% ------------------------------------------------------------
\subsection{Isotropic kernels and native spaces}
\label{subsec:isotropic-kernels}

Unless stated otherwise, all kernels are complex-valued and Hermitian:
\[
K(s,t)=\overline{K(t,s)}.
\]
A Hermitian kernel \(K:X\times X\to\Cbb\) is positive definite if, for every
\(N\in\mathbb N\), every \(s_1,\ldots,s_N\in X\), and every
\(c_1,\ldots,c_N\in\Cbb\),
\[
\sum_{i,j=1}^{N}
c_i\overline{c_j}\,K(s_i,s_j)\ge0.
\]
It is strictly positive definite if the inequality is strict whenever the
points \(s_1,\ldots,s_N\) are pairwise distinct and
\((c_1,\ldots,c_N)\neq0\).

Every positive definite kernel \(K\) induces a unique
reproducing kernel Hilbert space of functions on \(X\). We denote this space
by \(\Nspace(K)\) and refer to it as the native space of \(K\); see
\cite{WendlandSDA}.

For normed spaces \(X\) and \(Y\), we write \(X\equiv Y\) when the spaces coincide and their
norms are equivalent.

A continuous kernel
\(K_{\iso}:\Sph^2\times\Sph^2\to\Cbb\) is isotropic if it depends only on
the Euclidean inner product of its arguments:
\[
K_{\iso}(s,t)=\psi(s\cdot t)
\]
for a continuous function \(\psi:[-1,1]\to\R\). 
By Schoenberg's theorem \cite{Schoenberg1942},
every continuous isotropic positive definite kernel on \(\Sph^2\)
has a Legendre expansion with nonnegative coefficients;
see also \cite{Gneiting2013,BergPorcu2017}.

With the spherical-harmonic normalization from
Subsection~\ref{subsec:harmonics-sobolev}, this expansion takes the form
\begin{equation}
\label{eq:Kiso-expansion-S2}
K_{\iso}(s,t)
=
\sum_{\ell=0}^{\infty}
\kap_\ell
\sum_{m=-\ell}^{\ell}
Y_{\ell m}(s)\overline{Y_{\ell m}(t)},
\end{equation}
where \(\kap_\ell\ge0\) denotes the spectral coefficient. Using the spherical-harmonic addition
theorem \cite[Eq.~14.30.9]{NIST:DLMF}
\[
\sum_{m=-\ell}^{\ell}
Y_{\ell m}(s)\overline{Y_{\ell m}(t)}
=
\frac{2\ell+1}{4\pi}P_\ell(s\cdot t),
\]
we obtain the equivalent Legendre representation
\[
K_{\iso}(s,t)
=
\sum_{\ell=0}^{\infty}
\kap_\ell\frac{2\ell+1}{4\pi}P_\ell(s\cdot t),
\]
with the summability condition
\begin{equation}
\label{eq:schoenberg-summability-kappa}
\sum_{\ell=0}^{\infty}(2\ell+1)\kap_\ell<\infty.
\end{equation}

This condition yields absolute and uniform convergence of the Legendre
expansion.
We use the following standard spectral criterion; see
\cite{SloanSommariva2008,WendlandSDA}.

\begin{proposition}
\label{prop:isotropic-native-space-spectral}
Let \(K_{\iso}\) be given by
\eqref{eq:Kiso-expansion-S2}, and assume that
\(\kap_\ell>0\) for every \(\ell\ge0\). Suppose that there are constants
\(0<c_-\le c_+<\infty\) and \(\tau>1\) such that
\begin{equation}
\label{eq:two-sided-spectral-assumption}
c_-(1+\Lambda_\ell)^{-\tau}
\le
\kap_\ell
\le
c_+(1+\Lambda_\ell)^{-\tau},
\qquad \ell\ge0.
\end{equation}
Then
{
\[
\Nspace(K_{\iso})\equiv H^\tau(\Sph^2).
\]
}
More precisely,
\[
\frac{1}{c_+}
\|g\|_{H^\tau(\Sph^2)}^2
\le
\|g\|_{\Nspace(K_{\iso})}^2
\le
\frac{1}{c_-}
\|g\|_{H^\tau(\Sph^2)}^2.
\]
\end{proposition}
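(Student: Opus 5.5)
The plan is to identify the native space of $K_{\iso}$ explicitly through its spherical-harmonic expansion, and then compare its norm termwise with the Sobolev norm \eqref{eq:sobolev-S2}. The candidate space is
\[
\mathcal H=\Bigl\{g\in L^2(\Sph^2):\ \|g\|_{\mathcal H}^2:=\sum_{\ell=0}^\infty\sum_{m=-\ell}^{\ell}\frac{|\widehat g_{\ell m}|^2}{\kap_\ell}<\infty\Bigr\},
\]
with inner product $\langle f,g\rangle_{\mathcal H}=\sum_{\ell,m}\kap_\ell^{-1}\widehat f_{\ell m}\overline{\widehat g_{\ell m}}$. This is well defined because $\kap_\ell>0$ for every $\ell$. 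It is a Hilbert space, since it is isometric to a weighted $\ell^2$ space via $g\mapsto(\widehat g_{\ell m})$.

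The first step is to show that point evaluation is continuous on $\mathcal H$, so that each $g\in\mathcal H$ has a continuous representative given by its pointwise and uniformly convergent harmonic series. By Cauchy--Schwarz and the addition theorem, $|Y_{\ell m}(s)|$ summed in squares over $m$ equals $(2\ell+1)/(4\pi)$. This gives
\[
\sum_{\ell,m}|\widehat g_{\ell m}||Y_{\ell m}(s)|\le \|g\|_{\mathcal H}\Bigl(\sum_\ell \kap_\ell\tfrac{2\ell+1}{4\pi}\Bigr)^{1/2},
\]
which is finite by \eqref{eq:schoenberg-summability-kappa}.

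The second step is to verify the reproducing property. The $\ell^2$ argument above shows that $K_{\iso}(\cdot,t)\in\mathcal H$, with coefficients $\kap_\ell\overline{Y_{\ell m}(t)}$ and norm squared $\sum_\ell\kap_\ell(2\ell+1)/(4\pi)<\infty$. Then
\[
\langle g,K_{\iso}(\cdot,t)\rangle_{\mathcal H}=\sum_{\ell,m}\kap_\ell^{-1}\widehat g_{\ell m}\,\kap_\ell Y_{\ell m}(t)=g(t).
\]
By uniqueness of the reproducing kernel Hilbert space (as in \cite{WendlandSDA}), $\Nspace(K_{\iso})=\mathcal H$ with identical norms.

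The final step is a termwise comparison. From \eqref{eq:two-sided-spectral-assumption} we get
\[
c_+^{-1}(1+\Lambda_\ell)^\tau\le\kap_\ell^{-1}\le c_-^{-1}(1+\Lambda_\ell)^\tau .
\]
Multiplying by $|\widehat g_{\ell m}|^2$ and summing yields the stated two-sided inequality. In particular, the two norms are finite for the same functions, so the spaces coincide. Here $\tau>1$ makes $\sum(2\ell+1)(1+\Lambda_\ell)^{-\tau}$ converge, which is consistent with the summability condition and with continuity (the Sobolev embedding). The main point needing care is the identification $\Nspace(K_{\iso})=\mathcal H$, specifically that elements of $H^\tau$ are genuine continuous functions on which evaluation agrees with the series. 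The uniform-convergence estimate in the first step handles this.
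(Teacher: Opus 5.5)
Your proof is correct. You identify $\Nspace(K_{\iso})$ with the weighted coefficient space $\{g:\sum_{\ell,m}\kap_\ell^{-1}|\widehat g_{\ell m}|^2<\infty\}$ using the reproducing property and uniqueness of the reproducing kernel Hilbert space, then compare the weights $\kap_\ell^{-1}$ and $(1+\Lambda_\ell)^\tau$ termwise, which gives exactly the stated two-sided bound. The paper gives no proof and simply cites this as the standard spectral criterion from \cite{SloanSommariva2008,WendlandSDA}; your argument is essentially the standard expansion-based proof behind that citation, including the correct observation that $\tau>1$ yields the summability \eqref{eq:schoenberg-summability-kappa} needed for bounded point evaluation.
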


%% ------------------------------------------------------------
%% Subsection: Axial symmetry and Jones blocks
%% ------------------------------------------------------------
\subsection{Axial symmetry and Jones blocks}
\label{subsec:jones-blocks}

Axial symmetry requires invariance only under rotations about a chosen
axis. Let
\[
G_{e_3}
:=
\{R\in\SO(3):Re_3=e_3\}
\]
denote the subgroup of rotations that fixes the reference axis \(e_3\). A
kernel \(K:\Sph^2\times\Sph^2\to\Cbb\) is axially symmetric about \(e_3\) if
\[
K(Rs,Rt)=K(s,t)
\]
for all \(s,t\in\Sph^2\) and \(R\in G_{e_3}\).

Throughout this subsection, we assume that \(K\) is continuous.
We write
\[
s=\omega(L,\lambda),
\qquad
t=\omega(L',\lambda').
\]
If we rotate both points by an angle \(\alpha\) about \(e_3\), we add \(\alpha\) to both longitudes, so their difference \(\lambda-\lambda'\) stays the same. An axially symmetric kernel is unchanged by this rotation and depends on the longitudes only through their difference. We can therefore write
\[
K(s,t)=\Psi(L,L',\lambda-\lambda'),
\]
as in the classical coordinate form used in
\cite{Jones1963,HuangZhangRobeson2012}.

The corresponding spectral representation for axially symmetric processes on
the sphere was introduced in \cite{Jones1963}. We use the coefficient notation
of \cite{BissiriPeronPorcu2020}; a related reformulation is given in
\cite{HuangZhangRobeson2012}. In this notation, the representation takes the
form
\begin{equation}
\label{eq:jones-representation}
K(s,t)
=
\sum_{m\in\mathbb Z}
\sum_{n,n'\ge |m|}
e^{im(\lambda-\lambda')}
P_n^{|m|}(\cos L)
P_{n'}^{|m|}(\cos L')
c_m(n,n').
\end{equation}

To express the same structure in a spherical-harmonic basis, first define
the general coefficients
\begin{equation}
\label{eq:general-harmonic-coefficients}
a_{nn'}^{(m,m')}
:=
\int_{\Sph^2}\int_{\Sph^2}
K(s,t)
\overline{Y_{nm}(s)}
Y_{n'm'}(t)
\,\dd\omega(t)\,\dd\omega(s).
\end{equation}
In the spherical-harmonic basis, axial symmetry is characterized by 
\begin{equation}
\label{eq:axial-selection-rule}
a_{nn'}^{(m,m')}=0
\qquad
\text{whenever }m\neq m',
\end{equation}
as shown in \cite{BuhmannJaeger2022}. We therefore abbreviate the
remaining coefficients by
\begin{equation}
\label{eq:fixed-order-coefficients}
a_{nn'}^{(m)}
:=
a_{nn'}^{(m,m)}.
\end{equation}

The spherical-harmonic expansion of an axially symmetric kernel consequently
takes the form
\begin{equation}
\label{eq:axial-harmonic-expansion}
K(s,t)
=
\sum_{m\in\mathbb Z}
\sum_{n,n'\ge |m|}
a_{nn'}^{(m)}
Y_{nm}(s)
\overline{Y_{n'm}(t)},
\end{equation}
with convergence in \(L^2(\Sph^2\times\Sph^2)\).

The expansions
\eqref{eq:jones-representation} and
\eqref{eq:axial-harmonic-expansion}
describe the same harmonic structure with different normalizations. Setting
\(\mu=|m|\) and using the spherical-harmonic convention fixed above gives
\begin{equation}
\label{eq:jones-harmonic-relation}
c_m(n,n')
=
\mathcal N_{n\mu}\mathcal N_{n'\mu}
a_{nn'}^{(m)}.
\end{equation}

For convenience, throughout this paper we refer to the
coefficient matrix
\begin{equation}
\label{eq:jones-block}
A^{(m)}
:=
\bigl(a_{nn'}^{(m)}\bigr)_{n,n'\ge |m|}
\end{equation}
as the \(m\)-th Jones block. 
%% ------------------------------------------------------------
%% Subsection: Gegenbauer polynomials and associated Legendre functions
%% ------------------------------------------------------------
\subsection{Gegenbauer polynomials and associated Legendre functions}
\label{subsec:gegenbauer}

For \(q\in\mathbb N_0\) and \(\nu>0\), the Gegenbauer polynomial
\(C_q^{(\nu)}\) admits the finite expansion
\begin{equation}
\label{eq:Gegenbauer-finite-expansion}
C_q^{(\nu)}(x)
=
\sum_{k=0}^{\lfloor q/2\rfloor}
(-1)^k
\frac{(\nu)_{q-k}}
{k!(q-2k)!}
(2x)^{q-2k},
\end{equation}
as recorded in
\cite[Eq.~18.5.10]{NIST:DLMF}.
Here
\[
 (a)_r=\frac{\Gamma(a+r)}{\Gamma(a)},
 \qquad r\in\mathbb N_0,
\]
denotes the Pochhammer symbol. With the normalization recorded in
\cite[Table~18.6.1]{NIST:DLMF},
\begin{equation}
\label{eq:Gegenbauer-normalization}
C_q^{(\nu)}(1)
=
\frac{\Gamma(q+2\nu)}
{\Gamma(q+1)\Gamma(2\nu)}.
\end{equation}
For integers \(\mu\ge0\) and \(n\ge\mu\), the associated Legendre
functions satisfy
\begin{equation}
\label{eq:associated-Legendre-Gegenbauer}
 P_n^\mu(x)
 =
 d_\mu(1-x^2)^{\mu/2}
 C_{n-\mu}^{(\mu+\frac12)}(x),
 \qquad
 d_\mu
 =
 (-1)^\mu\frac{(2\mu)!}{2^\mu\mu!},
 \qquad -1\le x\le1.
\end{equation}
This identity is given in
\cite[Eq.~18.11.1]{NIST:DLMF}.

For integers \(a\ge\mu\ge0\), define
\begin{subequations}
\label{eq:associated-Legendre-finite-data}
\begin{align}
 R_{a,\mu}
 &=
 \left\lfloor\frac{a-\mu}{2}\right\rfloor,
 \qquad
 \gamma_{a,\mu,k}
 =
 \frac{(-1)^{\mu+k}(2a-2k)!}
 {2^a k!(a-k)!(a-\mu-2k)!},
 \quad
 0\le k\le R_{a,\mu},
 \label{eq:gamma-def-main}
 \\
 P_a^\mu(x)
 &=
 (1-x^2)^{\mu/2}
 \sum_{k=0}^{R_{a,\mu}}
 \gamma_{a,\mu,k}x^{a-\mu-2k},
 \qquad -1\le x\le1.
 \label{eq:associated-Legendre-finite}
\end{align}
\end{subequations}
The finite representation
\eqref{eq:associated-Legendre-finite} follows by substituting
\(q=a-\mu\) and \(\nu=\mu+\frac12\) into
\eqref{eq:Gegenbauer-finite-expansion} and then using
\eqref{eq:associated-Legendre-Gegenbauer}. Appendix~\ref{app:explicit-S2}
gives the short calculation.

%% ============================================================
%% Section: Kernel pullbacks
%% ============================================================
\section{Kernel pullbacks}
\label{sec:general-pullbacks}

{
In this section, we present a general framework for kernel pullbacks,
covering positive definiteness, symmetry, native spaces, and Sobolev
regularity. Although most of these results are consequences of standard
kernel theory, we include short proofs for completeness, as we will
use these results in the later analysis.
}

%% ------------------------------------------------------------
%% Subsection: Positive definiteness under pullback
%% ------------------------------------------------------------
\subsection{Positive definiteness under pullback}
\label{subsec:positive-definite-pullbacks}

Let \(X\) and \(S\) be nonempty sets, let
\(K:X\times X\to\Cbb\) be a kernel, and let \(\Phi:S\to X\) be any map.
The pullback of \(K\) by \(\Phi\) is
\begin{equation}
\label{eq:abstract-pullback-kernel}
 K_\Phi(s,t)
 :=
 K\bigl(\Phi(s),\Phi(t)\bigr),
 \qquad s,t\in S.
\end{equation}

\begin{proposition}
\label{prop:pd-pullback}
Let \(K:X\times X\to\Cbb\) be positive definite and let
\(\Phi:S\to X\) be any map. Then \(K_\Phi\) is positive definite on \(S\).
If \(K\) is strictly positive definite and \(\Phi\) is injective, then
\(K_\Phi\) is strictly positive definite.
\end{proposition}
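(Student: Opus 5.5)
The plan is to check both claims straight from the definition of (strict) positive definiteness in Subsection~\ref{subsec:isotropic-kernels}, by pushing a finite configuration in \(S\) forward to \(X\) through \(\Phi\).

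\textbf{Hermitian property and positive definiteness.} First, \(K_\Phi\) is Hermitian, since
\[
K_\Phi(s,t)=K(\Phi(s),\Phi(t))=\overline{K(\Phi(t),\Phi(s))}=\overline{K_\Phi(t,s)}.
\]
Now fix \(N\in\mathbb N\), points \(s_1,\ldots,s_N\in S\) and coefficients \(c_1,\ldots,c_N\in\Cbb\). Set \(x_i:=\Phi(s_i)\in X\). Then
\[
\sum_{i,j=1}^N c_i\overline{c_j}\,K_\Phi(s_i,s_j)=\sum_{i,j=1}^N c_i\overline{c_j}\,K(x_i,x_j)\ge0 .
\]
The definition of positive definiteness allows arbitrary points, so repeated points among the \(x_i\) cause no difficulty. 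This proves the first claim.

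\textbf{Strict positive definiteness.} Assume now that \(K\) is strictly positive definite and \(\Phi\) is injective. Take pairwise distinct \(s_1,\ldots,s_N\) and \((c_1,\ldots,c_N)\neq0\). By injectivity, the points \(x_i=\Phi(s_i)\) are pairwise distinct. Strict positive definiteness of \(K\) then makes the same quadratic form strictly positive.

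The only point needing care is exactly this use of injectivity. Without it, two distinct \(s_i\) could map to the same \(x\), the form could vanish for nonzero \(c\), and strictness would fail. Otherwise the argument is routine, and I expect no real obstacle.
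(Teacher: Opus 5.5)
Your proposal is correct and matches the paper's proof: you rewrite the quadratic form of \(K_\Phi\) as the quadratic form of \(K\) at the image points \(\Phi(s_i)\), and in the strict case you use injectivity to keep those points pairwise distinct. Your explicit check that \(K_\Phi\) is Hermitian, which the paper's definition of positive definiteness requires but its proof leaves implicit, is a small welcome addition.
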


\begin{proof}
For \(s_1,\ldots,s_N\in S\) and \(c_1,\ldots,c_N\in\Cbb\),
\[
 \sum_{i,j=1}^{N}c_i\overline{c_j}\,K_\Phi(s_i,s_j)
 =
 \sum_{i,j=1}^{N}c_i\overline{c_j}\,
 K\bigl(\Phi(s_i),\Phi(s_j)\bigr)
 \ge0.
\]
If \(s_1,\ldots,s_N\) are pairwise distinct, their
images under the injective map \(\Phi\) are also pairwise distinct.
Since \(K\) is strictly positive definite, the sum is strictly
positive whenever the coefficient vector is nonzero.
\end{proof}

%% ------------------------------------------------------------
%% Subsection: Native spaces under pullback
%% ------------------------------------------------------------
\subsection{Native spaces under pullback}
\label{subsec:general-native-space-pullback}

For an arbitrary map \(\Phi:S\to X\), the standard RKHS pullback theorem
gives
\begin{equation}
\label{eq:general-rkhs-pullback-set}
 \Nspace(K_\Phi)
 =
 \bigl\{g\circ\Phi:g\in\Nspace(K)\bigr\},
\end{equation}
where the norm is the minimum \(\Nspace(K)\)-norm over all representations
of a function as \(g\circ\Phi\); see
\cite[Ch.~5]{PaulsenRaghupathi2016}. If \(\Phi\) is bijective, this
representation is unique.

\begin{corollary}
\label{cor:bijective-rkhs-pullback}
Let \(K:X\times X\to\Cbb\) be positive definite and let
\(\Phi:S\to X\) be bijective. Then
\[
 C_\Phi:\Nspace(K)\to\Nspace(K_\Phi),
 \qquad
 C_\Phi g=g\circ\Phi,
\]
is an isometric isomorphism. In particular,
\begin{equation}
\label{eq:bijective-rkhs-pullback-isometry}
 \|g\circ\Phi\|_{\Nspace(K_\Phi)}
 =
 \|g\|_{\Nspace(K)},
\end{equation}
and
\[
 C_\Phi^{-1}=C_{\Phi^{-1}}.
\]
\end{corollary}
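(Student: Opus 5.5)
The corollary follows from the pullback description \eqref{eq:general-rkhs-pullback-set} together with the fact that for a bijection the representation \(f=g\circ\Phi\) is unique. I would proceed in three steps.

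\emph{Step 1: \(C_\Phi\) is a well-defined bijection.} By \eqref{eq:general-rkhs-pullback-set}, every \(g\in\Nspace(K)\) gives an element \(g\circ\Phi\in\Nspace(K_\Phi)\), so \(C_\Phi\) maps into the right space. It is onto by the same set identity. For injectivity, suppose \(g_1\circ\Phi=g_2\circ\Phi\). Since \(\Phi\) is surjective, every \(x\in X\) can be written as \(x=\Phi(s)\), so \(g_1(x)=g_2(x)\). Linearity of \(C_\Phi\) is immediate from pointwise operations.

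\emph{Step 2: isometry.} The norm on \(\Nspace(K_\Phi)\) is the minimum of \(\|g\|_{\Nspace(K)}\) over all \(g\) with \(f=g\circ\Phi\). By Step 1 this set contains exactly one element, so the minimum equals \(\|g\|_{\Nspace(K)}\). This gives \eqref{eq:bijective-rkhs-pullback-isometry}. A bijective linear isometry is an isometric isomorphism.

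\emph{Step 3: the inverse.} For \(f\in\Nspace(K_\Phi)\) with \(f=g\circ\Phi\), we have \(g=f\circ\Phi^{-1}=C_{\Phi^{-1}}f\). Hence \(C_\Phi^{-1}=C_{\Phi^{-1}}\). Alternatively, apply \eqref{eq:general-rkhs-pullback-set} to \(K_\Phi\) and \(\Phi^{-1}\): since \((K_\Phi)_{\Phi^{-1}}=K\), this map lands in \(\Nspace(K)\), and it inverts \(C_\Phi\).

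The only point needing care is Step 2: one must check that the cited pullback norm really is the infimum over representations, and that uniqueness collapses it. If one prefers a self-contained argument, I would verify directly that \(\langle g_1\circ\Phi,g_2\circ\Phi\rangle:=\langle g_1,g_2\rangle_{\Nspace(K)}\) makes \(\{g\circ\Phi\}\) a Hilbert space with reproducing kernel \(K_\Phi\). Indeed, \(K_\Phi(\cdot,t)=K(\cdot,\Phi(t))\circ\Phi\), and
\[
\langle g\circ\Phi,K_\Phi(\cdot,t)\rangle=\langle g,K(\cdot,\Phi(t))\rangle_{\Nspace(K)}=g(\Phi(t)).
\]
Uniqueness of the RKHS then gives the claim.
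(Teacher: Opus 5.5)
Your proposal is correct and follows the paper's proof: the set identity \eqref{eq:general-rkhs-pullback-set} gives surjectivity of \(C_\Phi\), and bijectivity of \(\Phi\) makes the representation \(f=g\circ\Phi\) unique, so the minimum-norm formula reduces to \eqref{eq:bijective-rkhs-pullback-isometry}; composition with \(\Phi^{-1}\) then gives the inverse. Your optional self-contained check that \(\{g\circ\Phi\}\), with the transported inner product, has reproducing kernel \(K_\Phi\) is also valid and avoids citing the general pullback theorem, but the paper does not need it.
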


\begin{proof}
The general pullback theorem gives the set identity. Since \(\Phi\) is
bijective, \(h=g\circ\Phi\) determines
\(g=h\circ\Phi^{-1}\) uniquely. The quotient norm therefore reduces to
\eqref{eq:bijective-rkhs-pullback-isometry}. The set identity also gives
surjectivity, and composition with \(\Phi^{-1}\) gives the inverse.
\end{proof}

%% ------------------------------------------------------------
%% Subsection: Symmetry under pullback
%% ------------------------------------------------------------
\subsection{Symmetry under pullback}
\label{subsec:symmetry-equivariance}

We use the definitions of group actions and equivariant
maps from \cite{Lee2003}.

\begin{definition}[\(G\)-invariant kernels and \(G\)-equivariant maps]
\label{def:invariance-equivariance}
Let a group \(G\) act on sets \(S\) and \(X\). A kernel
\(K:X\times X\to\Cbb\) is called \(G\)-invariant if
\[
 K(gx,gy)=K(x,y)
 \qquad
 \text{for all }g\in G\text{ and }x,y\in X.
\]
A map \(\Phi:S\to X\) is called \(G\)-equivariant if
\[
 \Phi(gs)=g\Phi(s)
 \qquad
 \text{for all }g\in G\text{ and }s\in S.
\]
\end{definition}

For the axial symmetry introduced in
Subsection~\ref{subsec:jones-blocks}, the relevant group is
\(G_{e_3}\), the rotations that fix the reference axis \(e_3\). Thus
\(G_{e_3}\)-invariance is exactly axial symmetry about \(e_3\).

\begin{proposition}
\label{prop:symmetry-equivariant-pullback}
Let \(G\) act on \(S\) and \(X\). If \(K:X\times X\to\Cbb\) is
\(G\)-invariant and \(\Phi:S\to X\) is \(G\)-equivariant, then the
pullback kernel \(K_\Phi\) is \(G\)-invariant on \(S\).
\end{proposition}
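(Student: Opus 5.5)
The plan is a direct one-line verification from the definitions; no earlier result is needed beyond the definition of the pullback kernel \eqref{eq:abstract-pullback-kernel} and Definition~\ref{def:invariance-equivariance}. Fix \(g\in G\) and \(s,t\in S\). By the definition of \(K_\Phi\), we have
\[
K_\Phi(gs,gt)=K\bigl(\Phi(gs),\Phi(gt)\bigr).
\]

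The first step uses the \(G\)-equivariance of \(\Phi\). It lets us rewrite \(\Phi(gs)=g\Phi(s)\) and \(\Phi(gt)=g\Phi(t)\), which gives
\[
K_\Phi(gs,gt)=K\bigl(g\Phi(s),g\Phi(t)\bigr).
\]

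The second step applies the \(G\)-invariance of \(K\) at the points \(x=\Phi(s)\) and \(y=\Phi(t)\) of \(X\). This turns the right-hand side into \(K(\Phi(s),\Phi(t))=K_\Phi(s,t)\). Since \(g\), \(s\) and \(t\) were arbitrary, this is exactly \(G\)-invariance of \(K_\Phi\) on \(S\).

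There is no real obstacle here; the only point to check is that the two actions are used consistently. Equivariance transfers the action on \(S\) to the action on \(X\), and invariance is then applied to the action on \(X\). No continuity, positive definiteness or bijectivity of \(\Phi\) is required.

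For the later application with \(G=G_{e_3}\), the remark will be that the result reduces axial symmetry of the deformed kernel to checking that the deformation commutes with rotations about \(e_3\). The isotropic kernel \(K_{\iso}(s,t)=\psi(s\cdot t)\) is invariant under all of \(\SO(3)\), and in particular under \(G_{e_3}\).
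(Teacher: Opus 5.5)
Your proposal is correct and matches the paper's proof: the paper also substitutes $\Phi(gs)=g\Phi(s)$ and $\Phi(gt)=g\Phi(t)$ by equivariance and then applies $G$-invariance of $K$ at $\Phi(s),\Phi(t)$. Your remark that no continuity, positive definiteness or bijectivity is needed is accurate, and the paper uses this proposition in exactly the way you anticipate for $G_{e_3}$ in Corollary~\ref{cor:pullback-axial}.
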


\begin{proof}
For \(g\in G\) and \(s,t\in S\),
\[
 \begin{aligned}
 K_\Phi(gs,gt)
 &=
 K\bigl(\Phi(gs),\Phi(gt)\bigr)\\
 &=
 K\bigl(g\Phi(s),g\Phi(t)\bigr)\\
 &=
 K\bigl(\Phi(s),\Phi(t)\bigr)\\
 &=
 K_\Phi(s,t).
 \end{aligned}
\]
\end{proof}

%% ------------------------------------------------------------
%% Subsection: Sobolev stability
%% ------------------------------------------------------------
\subsection{Sobolev stability}
\label{subsec:sobolev-stability-general}

{
We now consider \(\Sph^2\). On a compact manifold, composition with a
smooth diffeomorphism preserves Sobolev regularity
\cite[Ch.~4]{Taylor2011}. The next proposition states this result for the Sobolev spaces defined in
Subsection~\ref{subsec:harmonics-sobolev}. We give a short proof using
charts in Appendix~\ref{app:native-transport-proof}.
}

\begin{proposition}
\label{prop:sobolev-stability-diffeomorphism}
Let \(\Phi:\Sph^2\to\Sph^2\) be a \(C^\infty\)-diffeomorphism. Then, for
every \(\tau\ge0\),
\[
 C_\Phi:H^\tau(\Sph^2)\to H^\tau(\Sph^2),
 \qquad
 C_\Phi f=f\circ\Phi,
\]
is a bounded isomorphism with inverse \(C_{\Phi^{-1}}\). Equivalently,
\begin{equation}
\label{eq:sobolev-composition-equivalence}
 \|f\circ\Phi\|_{H^\tau(\Sph^2)}
 \asymp_{\Phi,\tau}
 \|f\|_{H^\tau(\Sph^2)}.
\end{equation}
\end{proposition}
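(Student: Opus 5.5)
We will prove the statement by passing from the spectral definition \eqref{eq:sobolev-S2} to an equivalent chart-based (local) Sobolev norm. On that norm, composition with a diffeomorphism can be handled by the Euclidean result on open subsets of \(\R^2\). Since \(C_{\Phi^{-1}}\) is a left and right inverse of \(C_\Phi\), and \(\Phi^{-1}\) is again a \(C^\infty\)-diffeomorphism, it suffices to prove the one-sided bound
\[
\|f\circ\Phi\|_{H^\tau(\Sph^2)}\le C_{\Phi,\tau}\|f\|_{H^\tau(\Sph^2)}.
\]
Applying the same bound to \(\Phi^{-1}\) and \(f\circ\Phi\) gives the reverse inequality, and hence the isomorphism together with \eqref{eq:sobolev-composition-equivalence}. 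We work with \(f\in C^\infty(\Sph^2)\), which is dense in \(H^\tau(\Sph^2)\) because spherical-harmonic truncations are dense. The bound then extends to all of \(H^\tau(\Sph^2)\) by continuity.

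\textbf{Step 1.} Fix a finite atlas \((U_j,x_j)_{j=1}^J\) of \(\Sph^2\), for instance stereographic projections or the six hemisphere graph charts, together with a smooth partition of unity \((\chi_j)\) subordinate to it. Define the local norm
\[
\|f\|_{\tau,\mathrm{loc}}^2:=\sum_j\|(\chi_jf)\circ x_j^{-1}\|_{H^\tau(\R^2)}^2 .
\]
We then show that \(\|\cdot\|_{\tau,\mathrm{loc}}\) is equivalent to the spectral norm. For integer \(\tau=k\), we use the fact that \(\sum_\ell(1+\Lambda_\ell)^k|\widehat f_{\ell m}|^2=\langle(1-\Delta_{\Sph^2})^kf,f\rangle\). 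Since \(1-\Delta_{\Sph^2}\) is an elliptic second-order operator with smooth coefficients in each chart, integration by parts shows this quantity is comparable to \(\sum_{|\alpha|\le k}\|\partial^\alpha(\ldots)\|_{L^2}^2\) in charts. The upper bound is direct; the lower bound uses elliptic regularity (Gårding) for \((1-\Delta_{\Sph^2})^{k/2}\) or \(^{(k-1)/2}\). For non-integer \(\tau\ge0\), both scales are exact complex interpolation scales. The spectral scale interpolates because it is a weighted \(\ell^2\) scale. The local scale interpolates by the standard retraction/coretraction argument given by \(f\mapsto((\chi_jf)\circ x_j^{-1})_j\) and its left inverse \((g_j)\mapsto\sum_j\tilde\chi_j\,g_j\circ x_j\), where \(\tilde\chi_j\equiv1\) on \(\operatorname{supp}\chi_j\). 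Interpolation then gives equivalence for all \(\tau\ge0\).

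\textbf{Step 2.} Estimate \(f\circ\Phi\) in the local norm. We write \((\chi_j\,(f\circ\Phi))\circ x_j^{-1}=\sum_i (\chi_j\,(\chi_i f)\circ\Phi)\circ x_j^{-1}\). Each summand equals \(g_i\circ\theta_{ij}\) multiplied by a smooth compactly supported cutoff, where \(g_i=(\chi_if)\circ x_i^{-1}\) and \(\theta_{ij}=x_i\circ\Phi\circ x_j^{-1}\) is a smooth diffeomorphism between open sets of \(\R^2\). Only the region where the supports overlap matters, so the cutoff has compact support inside that domain. By the Euclidean facts that multiplication by \(C_c^\infty\) functions and composition with diffeomorphisms (restricted to compact sets) are bounded on \(H^\tau(\R^2)\), each summand is bounded by \(C\|g_i\|_{H^\tau}\). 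These facts follow from the chain rule for integer \(\tau\) and by interpolation otherwise. Summing over \(i,j\) gives the one-sided bound. Step 2 is routine.

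The main obstacle is Step 1: the equivalence of the spectral and chart norms, particularly the lower bound for integer orders and the interpolation for fractional \(\tau\). If a citation is acceptable, we would simply invoke \cite[Ch.~4]{Taylor2011} for the identification \(H^\tau(\Sph^2)=\mathcal D((1-\Delta_{\Sph^2})^{\tau/2})\) with the chart-defined spaces, and present Step 2 in full.
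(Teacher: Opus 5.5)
Your proof is correct, but it is organized differently from the paper's.

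\textbf{The diffeomorphism step.} The paper never estimates $f\circ\Phi$ in a fixed atlas. Instead it transports the atlas by the diffeomorphism, setting $V_j=\Phi^{-1}(U_j)$, $\widetilde\kappa_j=\kappa_j\circ\Phi$ and $\widetilde\chi_j=\chi_j\circ\Phi$. Then $(\widetilde\chi_j(f\circ\Phi))\circ\widetilde\kappa_j^{-1}=(\chi_jf)\circ\kappa_j^{-1}$, so the two chart norms agree exactly. The bound follows from the cited fact that the spectral norm is equivalent to the chart norm of \emph{every} finite smooth atlas. Your Step~2, with the maps $\theta_{ij}=x_i\circ\Phi\circ x_j^{-1}$ and the Euclidean cutoff/composition lemma, is essentially the computation hidden inside that atlas-independence claim. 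Your version shows explicitly where the diffeomorphism enters. The paper's is shorter because it delegates that analysis to the citation.

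\textbf{Fractional orders.} The paper proves only integer orders. It then interpolates the operator $C_\Phi$ between $H^0(\Sph^2)$ and $H^r(\Sph^2)$ on the spectral scale, which is an exact complex interpolation scale for elementary weighted-$\ell^2$ reasons. No fractional chart norm is ever needed. You instead establish the spectral/chart equivalence for all $\tau\ge0$, which requires the retraction--coretraction argument and fractional Euclidean composition bounds. That is valid but more than necessary. Once you have the integer-order bound in the spectral norm, you can interpolate $C_\Phi$ directly and drop the fractional half of Step~1. What remains is only the integer-order spectral/chart equivalence, which is the same input the paper takes from \cite{Taylor2011}.
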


\begin{proof}
The integer-order case is proved in
Theorem~\ref{thm:sobolev-stability-appendix}, and interpolation gives the
fractional-order case in
Corollary~\ref{cor:fractional-sobolev-stability-appendix}. Applying the same
argument to \(\Phi^{-1}\) gives the bounded inverse and the two-sided norm
estimate.
\end{proof}

Here \(\asymp_{\Phi,\tau}\) denotes two-sided bounds
with positive constants depending only on \(\Phi\) and \(\tau\),
but not on \(f\).

For later reference, we specialize the pullback in
\eqref{eq:abstract-pullback-kernel} to a positive definite kernel
\(K:\Sph^2\times\Sph^2\to\Cbb\) and write
\begin{equation}
\label{eq:general-pullback-kernel}
 K_\Phi(s,t)
 =
 K\bigl(\Phi(s),\Phi(t)\bigr).
\end{equation}
Combining the RKHS isometry with Sobolev stability gives the following
consequence.

\begin{corollary}
\label{cor:native-transport}
Let \(\Phi:\Sph^2\to\Sph^2\) be a \(C^\infty\)-diffeomorphism, and let
\(K_\Phi\) be defined by \eqref{eq:general-pullback-kernel}.
{
If, for some \(\tau\ge0\),
\[
 \Nspace(K)\equiv H^\tau(\Sph^2),
\]
then
\begin{equation}
\label{eq:native-space-transport-sobolev}
 \Nspace(K_\Phi)\equiv H^\tau(\Sph^2).
\end{equation}
}
\end{corollary}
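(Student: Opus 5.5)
The plan is to combine the RKHS isometry of Corollary~\ref{cor:bijective-rkhs-pullback} with the Sobolev stability of Proposition~\ref{prop:sobolev-stability-diffeomorphism}. A \(C^\infty\)-diffeomorphism \(\Phi\) of \(\Sph^2\) is in particular a bijection. Corollary~\ref{cor:bijective-rkhs-pullback} therefore gives
\[
 \Nspace(K_\Phi)=\{g\circ\Phi:\ g\in\Nspace(K)\},
 \qquad
 \|g\circ\Phi\|_{\Nspace(K_\Phi)}=\|g\|_{\Nspace(K)}.
\]
By hypothesis \(\Nspace(K)=H^\tau(\Sph^2)\) as sets, with constants \(0<a\le b\) such that \(a\|g\|_{H^\tau}\le\|g\|_{\Nspace(K)}\le b\|g\|_{H^\tau}\). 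It follows that \(\Nspace(K_\Phi)=C_\Phi\bigl(H^\tau(\Sph^2)\bigr)\).

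The first step is the set identity. Proposition~\ref{prop:sobolev-stability-diffeomorphism} says that \(C_\Phi\) maps \(H^\tau(\Sph^2)\) bijectively onto itself, with inverse \(C_{\Phi^{-1}}\). Hence \(C_\Phi(H^\tau(\Sph^2))=H^\tau(\Sph^2)\) and \(\Nspace(K_\Phi)=H^\tau(\Sph^2)\). Concretely, for \(h\in H^\tau\) we set \(g=h\circ\Phi^{-1}\). This \(g\) lies in \(H^\tau=\Nspace(K)\), and \(h=g\circ\Phi\). The reverse inclusion is immediate from the same proposition.

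The second step is the norm equivalence. For \(h\in H^\tau\), take \(g=h\circ\Phi^{-1}\) as above. Then
\[
 \|h\|_{\Nspace(K_\Phi)}=\|h\circ\Phi^{-1}\|_{\Nspace(K)}
 \asymp \|h\circ\Phi^{-1}\|_{H^\tau}
 \asymp_{\Phi,\tau}\|h\|_{H^\tau}.
\]
The first \(\asymp\) comes from the hypothesis \(\Nspace(K)\equiv H^\tau\). The second comes from \eqref{eq:sobolev-composition-equivalence} applied to the diffeomorphism \(\Phi^{-1}\), or equivalently from boundedness of both \(C_\Phi\) and \(C_{\Phi^{-1}}\). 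The resulting constants are products of \(a,b\) with the constants of Proposition~\ref{prop:sobolev-stability-diffeomorphism}, so they are independent of \(h\).

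There is no substantial obstacle here. The only point needing care is the bookkeeping for functions versus equivalence classes. Elements of \(H^\tau\) are \(L^2\)-classes, while native-space elements are genuine functions. The identification \(\Nspace(K)\equiv H^\tau\) implicitly picks the (continuous, for \(\tau>1\)) representative. Composition with a diffeomorphism respects this choice, because \(\Phi\) maps null sets to null sets, so \(C_\Phi\) is well defined on classes and agrees with pointwise composition on representatives. I would state this remark briefly and otherwise keep the proof to the two displayed chains above.
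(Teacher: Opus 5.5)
Your proposal is correct and matches the paper's proof: the paper likewise combines the isometry $\Nspace(K_\Phi)=C_\Phi\Nspace(K)$ from Corollary~\ref{cor:bijective-rkhs-pullback} with $C_\Phi H^\tau(\Sph^2)=H^\tau(\Sph^2)$ and equivalent norms from Proposition~\ref{prop:sobolev-stability-diffeomorphism}. Your explicit norm chain through $g=h\circ\Phi^{-1}$, and your remark on representatives, fill in details that the paper summarizes as ``these two results imply''.
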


\begin{proof}
{
By Corollary~\ref{cor:bijective-rkhs-pullback},
\[
\Nspace(K_\Phi)=C_\Phi\Nspace(K),
\]
and \(C_\Phi\) is an isometry between the two native spaces.
Proposition~\ref{prop:sobolev-stability-diffeomorphism} gives
\[
C_\Phi H^\tau(\Sph^2)=H^\tau(\Sph^2)
\]
with equivalent Sobolev norms. These two results imply
\eqref{eq:native-space-transport-sobolev}.
}
\end{proof}

%% ============================================================
%% Section: Normalized axial deformation
%% ============================================================
\section{Normalized axial deformation}
\label{sec:canonical-axial}

We now introduce \(T_\beta\), our $G_{e_3}$-equivariant transformation of the sphere.
We then study its effect on the spherical-harmonic representation
of an isotropic kernel.

Fix \(\beta>0\), and define
\begin{equation}
\label{eq:Abeta-Tbeta-def}
A_\beta=\diag(1,1,\beta),
\qquad
T_\beta(s)=\frac{A_\beta s}{\|A_\beta s\|},
\qquad s\in\Sph^2.
\end{equation}
The map \(T_\beta\) stretches or compresses the third
coordinate by a factor of \(\beta\) and then normalizes the vector
back onto \(\Sph^2\).

Let \(K_{\iso}\) be a continuous isotropic positive definite kernel, and
define the pullback kernel
\begin{equation}
\label{eq:Kbeta-def}
 K_\beta(s,t)
 :=
 K_{\iso}\bigl(T_\beta(s),T_\beta(t)\bigr).
\end{equation}

{
Specializing \eqref{eq:general-harmonic-coefficients} to \(K_\beta\), we define
\begin{equation}
\label{eq:canonical-pullback-coefficients}
a_{nn'}^{(m,m')}(\beta)
:=
\int_{\Sph^2}\int_{\Sph^2}
K_\beta(s,t)
\overline{Y_{nm}(s)}
Y_{n'm'}(t)
\,\dd\omega(t)\,\dd\omega(s),
\end{equation}
for \(n,n'\ge0\), \(|m|\le n\), and \(|m'|\le n'\).
With these coefficients, the expansion of \(K_\beta\) in
\(L^2(\Sph^2\times\Sph^2)\) is
\begin{equation}
\label{eq:Kbeta-general-harmonic-expansion}
K_\beta(s,t)
=
\sum_{n=0}^{\infty}\sum_{m=-n}^{n}
\sum_{n'=0}^{\infty}\sum_{m'=-n'}^{n'}
a_{nn'}^{(m,m')}(\beta)
Y_{nm}(s)\overline{Y_{n'm'}(t)}.
\end{equation}

On the other hand, inserting the spherical-harmonic expansion
\eqref{eq:Kiso-expansion-S2} of \(K_{\iso}\) into
\eqref{eq:Kbeta-def} gives
\begin{equation}
\label{eq:Kbeta-composed-series}
 K_\beta(s,t)
 =
 \sum_{\ell=0}^{\infty}
 \kap_\ell
 \sum_{m=-\ell}^{\ell}
 Y_{\ell m}\bigl(T_\beta(s)\bigr)
 \overline{Y_{\ell m}\bigl(T_\beta(t)\bigr)}.
\end{equation}

In this series, we keep the coefficients \(\kap_\ell\) and
evaluate the harmonics at the transformed points. To find the
coefficients in \eqref{eq:Kbeta-general-harmonic-expansion}, we need
to express these transformed harmonics in the original spherical-harmonic
basis. This will show how the deformation couples different harmonic
degrees. We begin by looking at the geometry of \(T_\beta\).
}
%% ------------------------------------------------------------
%% Subsection: Geometry of the deformation
%% ------------------------------------------------------------
\subsection{Geometry of the deformation}
\label{subsec:canonical-deformation}

We first show that \(T_\beta\) is a smooth diffeomorphism
that commutes with rotations about \(e_3\), so the general pullback results
apply.

\begin{proposition}
\label{prop:diffeo-equivariance}
The map \(T_\beta:\Sph^2\to\Sph^2\) is a smooth diffeomorphism with inverse
\begin{equation}
\label{eq:Tbeta-inverse}
 T_\beta^{-1}(s)
 =
 \frac{A_\beta^{-1}s}{\|A_\beta^{-1}s\|},
 \qquad
 A_\beta^{-1}=\diag(1,1,\beta^{-1}).
\end{equation}
In particular, \(T_\beta^{-1}=T_{\beta^{-1}}\). Moreover, if
\(R\in\SO(3)\) satisfies \(Re_3=e_3\), then
\begin{equation}
\label{eq:Tbeta-equivariance}
 T_\beta(Rs)=R\,T_\beta(s).
\end{equation}
\end{proposition}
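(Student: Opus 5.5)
The plan is to view \(T_\beta\) as the composition of the linear map \(A_\beta\), restricted to \(\Sph^2\), with the radial projection
\[
N:\R^3\setminus\{0\}\to\Sph^2,\qquad N(x)=x/\|x\|.
\]
Since \(\beta>0\), \(A_\beta\) is invertible. Hence \(A_\beta s\neq0\) for every \(s\in\Sph^2\), and \(\|A_\beta s\|\) is bounded below by \(\min(1,\beta)>0\) on the sphere. The map
\[
x\mapsto A_\beta x/\|A_\beta x\|
\]
is then a quotient of smooth functions with nonvanishing denominator on an open neighbourhood of \(\Sph^2\) in \(\R^3\). Its restriction to the embedded submanifold \(\Sph^2\) is therefore smooth, and it takes values in \(\Sph^2\). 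The same reasoning with \(\beta^{-1}\) in place of \(\beta\) shows that \(T_{\beta^{-1}}\) is smooth.

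The key step is to verify \(T_{\beta^{-1}}\circ T_\beta=\mathrm{id}\) and \(T_\beta\circ T_{\beta^{-1}}=\mathrm{id}\); this yields bijectivity, the inverse formula \eqref{eq:Tbeta-inverse}, and the diffeomorphism property all at once. For \(s\in\Sph^2\), put \(c=\|A_\beta s\|^{-1}>0\), so that \(T_\beta(s)=cA_\beta s\). Linearity gives
\[
A_\beta^{-1}T_\beta(s)=c\,s,
\]
and positive homogeneity of the norm gives \(\|A_\beta^{-1}T_\beta(s)\|=c\) because \(\|s\|=1\). Hence \(T_{\beta^{-1}}(T_\beta(s))=s\). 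The other composition follows by exchanging \(\beta\) and \(\beta^{-1}\), using \(A_{\beta^{-1}}=A_\beta^{-1}\). The only point needing care is that the normalization commutes with positive scalars, i.e.\ \(N(cx)=N(x)\) for \(c>0\).

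For the equivariance \eqref{eq:Tbeta-equivariance}, I would first describe \(R\in\SO(3)\) with \(Re_3=e_3\). Since \(R\) is orthogonal, it preserves \(e_3^\perp\) as well as \(e_3\). Therefore \(R\) is block diagonal, \(\diag(R',1)\), with \(R'\in\SO(2)\) (indeed \(\det R'=\det R=1\)). Since \(A_\beta=\diag(I_2,\beta)\) has the same block structure with a scalar block on \(e_3^\perp\), the two matrices commute: \(A_\beta R=RA_\beta\). Using \(\|Rx\|=\|x\|\), we get
\[
T_\beta(Rs)=\frac{RA_\beta s}{\|RA_\beta s\|}=R\,\frac{A_\beta s}{\|A_\beta s\|}=R\,T_\beta(s).
\]
No step is a real obstacle; the mildest one is the block-diagonal description of the stabilizer, which follows directly from orthogonality.
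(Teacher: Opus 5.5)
Your proposal is correct and follows essentially the same route as the paper: check $T_{\beta^{-1}}\circ T_\beta=\mathrm{id}$ and $T_\beta\circ T_{\beta^{-1}}=\mathrm{id}$ by direct substitution, note that both maps are smooth, and deduce equivariance from $RA_\beta=A_\beta R$ together with $\|Rx\|=\|x\|$. The only cosmetic difference is in how you get the commutation: the paper writes $A_\beta=I_3+(\beta-1)e_3e_3^\top$ and observes that $R$ commutes with $e_3e_3^\top$, whereas you use the block form $\diag(R',1)$; both arguments rest on $R^\top e_3=e_3$.
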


\begin{proof}
{
Since \(\beta>0\), the diagonal matrix \(A_\beta\) is invertible with the
inverse displayed in \eqref{eq:Tbeta-inverse}, and \(A_\beta s\neq0\) for
every \(s\in\Sph^2\). Direct substitution gives
\[
 T_{\beta^{-1}}\bigl(T_\beta(s)\bigr)=s,
 \qquad
 T_\beta\bigl(T_{\beta^{-1}}(s)\bigr)=s.
\]
Both maps are smooth, so \(T_\beta\) is a smooth diffeomorphism.
}

If \(Re_3=e_3\), then \(R\) commutes with the projection
\(e_3e_3^\top\), and therefore with
\[
 A_\beta=I_3+(\beta-1)e_3e_3^\top.
\]
Since \(R\) is orthogonal,
\[
 T_\beta(Rs)
 =
 \frac{RA_\beta s}{\|RA_\beta s\|}
 =
 R\,T_\beta(s).
\]
\end{proof}

\begin{corollary}
\label{cor:pullback-axial}
If \(K_{\iso}\) is positive definite,
then \(K_\beta\) is positive definite and axially symmetric about
\(e_3\). If \(K_{\iso}\) is strictly positive definite, then
\(K_\beta\) is strictly positive definite.
\end{corollary}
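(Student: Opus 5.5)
The plan is to combine the three general pullback results of Section~\ref{sec:general-pullbacks} with the geometric facts about \(T_\beta\) from Proposition~\ref{prop:diffeo-equivariance}. By \eqref{eq:Kbeta-def}, \(K_\beta\) is exactly the pullback \(K_{\Phi}\) of \eqref{eq:abstract-pullback-kernel}, with \(X=S=\Sph^2\), \(K=K_{\iso}\) and \(\Phi=T_\beta\). Each claim should then follow by checking the hypotheses of the appropriate earlier proposition.

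\textbf{Positive definiteness.} Proposition~\ref{prop:pd-pullback} applies with any map \(\Phi\), so it immediately gives positive definiteness of \(K_\beta\). For strict positive definiteness, the same proposition also requires \(\Phi\) to be injective. Proposition~\ref{prop:diffeo-equivariance} shows that \(T_\beta\) is a diffeomorphism with inverse \(T_{\beta^{-1}}\), hence in particular injective. Therefore, if \(K_{\iso}\) is strictly positive definite, so is \(K_\beta\).

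\textbf{Axial symmetry.} Take \(G=G_{e_3}\), acting on \(\Sph^2\) by matrix multiplication. First, \(K_{\iso}\) is \(G\)-invariant: for \(R\in\SO(3)\), orthogonality gives \((Rs)\cdot(Rt)=s\cdot t\), so \(\psi(Rs\cdot Rt)=\psi(s\cdot t)\). Second, \(T_\beta\) is \(G\)-equivariant by \eqref{eq:Tbeta-equivariance}. Proposition~\ref{prop:symmetry-equivariant-pullback} then shows that \(K_\beta\) is \(G_{e_3}\)-invariant. As noted in Subsection~\ref{subsec:symmetry-equivariance}, this is precisely axial symmetry about \(e_3\).

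No step presents a real obstacle; the only care needed is to use the group action on both copies of \(\Sph^2\) consistently.
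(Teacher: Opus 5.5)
Your proposal is correct and matches the paper's proof: positive and strict positive definiteness come from Proposition~\ref{prop:pd-pullback} together with injectivity of \(T_\beta\), and axial symmetry comes from the rotation invariance of \(K_{\iso}\), the equivariance \eqref{eq:Tbeta-equivariance}, and Proposition~\ref{prop:symmetry-equivariant-pullback}. Your check that \((Rs)\cdot(Rt)=s\cdot t\) just makes explicit the invariance of \(K_{\iso}\) that the paper uses without comment.
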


\begin{proof}
Positive definiteness follows from
Proposition~\ref{prop:pd-pullback}. If \(K_{\iso}\) is strictly positive
definite, strict positive definiteness is preserved because \(T_\beta\) is
injective. Finally, \(K_{\iso}\) is invariant under rotations, and
\(T_\beta\) is equivariant under rotations fixing \(e_3\) by
Proposition~\ref{prop:diffeo-equivariance}. Axial symmetry therefore follows
from Proposition~\ref{prop:symmetry-equivariant-pullback}.
\end{proof}

We next express \(T_\beta\) in spherical coordinates to
make its action on the polar angle and longitude explicit.

Write
\[
 \omega(L,\lambda)
 =
 \begin{pmatrix}
 \sin L\cos\lambda\\
 \sin L\sin\lambda\\
 \cos L
 \end{pmatrix},
 \qquad
 0\le L\le\pi,
 \quad
 0\le\lambda<2\pi.
\]
Then
\[
 A_\beta\omega(L,\lambda)
 =
 \begin{pmatrix}
 \sin L\cos\lambda\\
 \sin L\sin\lambda\\
 \beta\cos L
 \end{pmatrix},
 \qquad
 \|A_\beta\omega(L,\lambda)\|
 =
 \sqrt{\sin^2L+\beta^2\cos^2L}.
\]
Therefore,
\[
 T_\beta\bigl(\omega(L,\lambda)\bigr)
 =
 \frac{1}{\sqrt{\sin^2L+\beta^2\cos^2L}}
 \begin{pmatrix}
 \sin L\cos\lambda\\
 \sin L\sin\lambda\\
 \beta\cos L
 \end{pmatrix}.
\]

Set \(x=\cos L\), and define
\begin{equation}
\label{eq:xtilde-def}
 \widetilde x_\beta(x)
 =
 \frac{\beta x}{\sqrt{1+(\beta^2-1)x^2}},
 \qquad -1\le x\le1.
\end{equation}
The transformed polar coordinate satisfies
\[
 \cos\Theta_\beta(L)
 =
 \frac{\beta\cos L}{\sqrt{\sin^2L+\beta^2\cos^2L}}
 =
 \widetilde x_\beta(\cos L).
\]
Hence, with
\begin{equation}
\label{eq:Theta-def}
 \Theta_\beta(L)
 =
 \arccos\!\bigl(\widetilde x_\beta(\cos L)\bigr),
\end{equation}
we obtain
\begin{equation}
\label{eq:Tbeta-canonical-coordinate}
 T_\beta\bigl(\omega(L,\lambda)\bigr)
 =
 \omega\bigl(\Theta_\beta(L),\lambda\bigr).
\end{equation}
For \(\beta=1\), the map \(T_\beta\) is the identity. If \(\beta>1\),
points away from the poles and the equator move toward the pole in
their hemisphere. If \(0<\beta<1\), they move toward the equator.
The poles and the equator remain fixed.

%% ------------------------------------------------------------
%% Subsection: Harmonic mode coupling
%% ------------------------------------------------------------
\subsection{Harmonic mode coupling}
\label{subsec:no-order-mixing}

We now expand a transformed spherical harmonic in the
original basis. The following theorem gives an integral formula for the expansion coefficients.
We use them later to express the Jones blocks of the kernel.

\begin{theorem}[Connection coefficients]
\label{thm:no-mixing-S2}
For every \(\ell\ge0\) and \(|m|\le\ell\), the pulled-back harmonic has the
\(L^2(\Sph^2)\)-expansion
\begin{equation}
\label{eq:pullback-harmonic-S2}
 Y_{\ell m}\bigl(T_\beta(s)\bigr)
 =
 \sum_{n\ge |m|}
 B_n^{(\ell,m)}(\beta)Y_{nm}(s).
\end{equation}
No terms of order \(m'\neq m\) occur. With
\(\mu=|m|\) and \(\widetilde x_\beta\) as defined in
\eqref{eq:xtilde-def}, the connection coefficients are
\begin{equation}
\label{eq:B-integral-S2}
 B_n^{(\ell,m)}(\beta)
 =
 2\pi\mathcal N_{\ell\mu}\mathcal N_{n\mu}
 \int_{-1}^{1}
 P_\ell^\mu\bigl(\widetilde x_\beta(x)\bigr)
 P_n^\mu(x)\,\dd x.
\end{equation}
\end{theorem}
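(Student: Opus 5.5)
We compute the Fourier--Laplace coefficients of \(f:=Y_{\ell m}\circ T_\beta\) directly. By Proposition~\ref{prop:diffeo-equivariance}, \(f\) is smooth, hence in \(L^2(\Sph^2)\). It therefore has the expansion \(f=\sum_{n,m'}\langle f,Y_{nm'}\rangle Y_{nm'}\) in \(L^2(\Sph^2)\). The argument reduces to two things: showing \(\langle f,Y_{nm'}\rangle=0\) for \(m'\neq m\), and evaluating the remaining coefficient.

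First we express \(f\) in coordinates. By \eqref{eq:Tbeta-canonical-coordinate} we have \(T_\beta(\omega(L,\lambda))=\omega(\Theta_\beta(L),\lambda)\). For \(m\ge0\), definition \eqref{eq:Ylm-def-positive} then gives
\[
f(\omega(L,\lambda))=\mathcal N_{\ell m}P_\ell^m\bigl(\widetilde x_\beta(\cos L)\bigr)e^{im\lambda},
\]
since \(\cos\Theta_\beta(L)=\widetilde x_\beta(\cos L)\). For \(m<0\), relation \eqref{eq:Ylm-negative} together with the fact that \(\widetilde x_\beta\) is real yields the same form with \(\mu=|m|\). Explicitly,
\[
f=(-1)^\mu\,\overline{\mathcal N_{\ell\mu}P_\ell^\mu(\widetilde x_\beta(\cos L))e^{i\mu\lambda}}=(-1)^\mu\mathcal N_{\ell\mu}P_\ell^\mu(\widetilde x_\beta(\cos L))e^{-i\mu\lambda}.
\]
In both cases \(f(\omega(L,\lambda))=c_m\,g(\cos L)e^{im\lambda}\), where \(g(x)=\mathcal N_{\ell\mu}P_\ell^\mu(\widetilde x_\beta(x))\). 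The factor \(c_m\) is \(1\) for \(m\ge0\) and \((-1)^\mu\) for \(m<0\). The basis function \(Y_{nm}\) carries exactly the same sign factor.

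Next we compute the inner product in coordinates, using \(\dd\omega=\sin L\,\dd L\,\dd\lambda\):
\[
\langle f,Y_{nm'}\rangle=\int_0^\pi\!\!\int_0^{2\pi} f\,\overline{Y_{nm'}}\,\sin L\,\dd\lambda\,\dd L .
\]
The \(\lambda\)-integral contains the factor \(\int_0^{2\pi}e^{i(m-m')\lambda}\dd\lambda=2\pi\delta_{mm'}\). This proves that no orders \(m'\neq m\) occur. For \(m'=m\), the sign factors \(c_m\) and \(\overline{c_m}\) multiply to \((\pm1)^2=1\). All Legendre factors are real, and the substitution \(x=\cos L\) turns \(\sin L\,\dd L\) on \([0,\pi]\) into \(\dd x\) on \([-1,1]\). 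This gives
\[
\langle f,Y_{nm}\rangle=2\pi\mathcal N_{\ell\mu}\mathcal N_{n\mu}\int_{-1}^1P_\ell^\mu(\widetilde x_\beta(x))P_n^\mu(x)\,\dd x,
\]
which is \eqref{eq:B-integral-S2}. Finally, \(Y_{nm}\) exists only for \(n\ge|m|\), so the sum runs over \(n\ge\mu\).

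The only point needing care, and the main potential obstacle, is the sign and conjugation bookkeeping for negative \(m\). One has to confirm that the \((-1)^\mu\) factors from \eqref{eq:Ylm-negative} appear identically in \(f\) and in \(Y_{nm}\), so that they cancel and the coefficient is \(m\)-symmetric as stated. Everything else is routine: integrability is automatic, since \(|\widetilde x_\beta|\le1\) and all functions involved are continuous.
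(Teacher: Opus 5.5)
Your proof is correct and follows the paper's argument: write the pulled-back harmonic in coordinates via $T_\beta(\omega(L,\lambda))=\omega(\Theta_\beta(L),\lambda)$, kill the orders $m'\neq m$ with $\int_0^{2\pi}e^{i(m-m')\lambda}\,\dd\lambda=2\pi\delta_{mm'}$, and substitute $x=\cos L$ to obtain \eqref{eq:B-integral-S2}. Your explicit check that the $(-1)^\mu$ factors from \eqref{eq:Ylm-negative} appear in both $Y_{\ell m}\circ T_\beta$ and $Y_{nm}$, and therefore cancel for negative $m$, fills in a step the paper only asserts by reference to that convention.
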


\begin{proof}
{
Write \(s=\omega(L,\lambda)\). For \(m\ge0\),
\eqref{eq:Tbeta-canonical-coordinate} gives
\begin{equation}
\label{eq:transformed-harmonic-coordinate}
 Y_{\ell m}\bigl(T_\beta(\omega(L,\lambda))\bigr)
 =
 \mathcal N_{\ell m}
 P_\ell^m\bigl(\widetilde x_\beta(\cos L)\bigr)
 e^{im\lambda}.
\end{equation}
Thus the longitude dependence is \(e^{im\lambda}\). For negative \(m\),
the same conclusion follows from the convention
\eqref{eq:Ylm-negative}.

Since \(Y_{\ell m}\circ T_\beta\in L^2(\Sph^2)\), it has the expansion
\[
 Y_{\ell m}\circ T_\beta
 =
 \sum_{n=0}^{\infty}
 \sum_{m'=-n}^{n}
 \left\langle
 Y_{\ell m}\circ T_\beta,
 Y_{nm'}
 \right\rangle_{L^2(\Sph^2)}
 Y_{nm'}.
\]
{
The integral over \(\lambda\) in each coefficient vanishes unless
\(m'=m\), since
\[
\int_0^{2\pi}
e^{i(m-m')\lambda}\,\dd\lambda
=
2\pi\delta_{mm'}.
\]
Hence only terms with \(m'=m\) remain, and necessarily \(n\ge|m|\),
which gives \eqref{eq:pullback-harmonic-S2}.
}

For \(m\ge0\), substituting the spherical-harmonic representations gives
\[
\begin{aligned}
 B_n^{(\ell,m)}(\beta)
 &=
 \int_0^{2\pi}\int_0^\pi
 Y_{\ell m}\bigl(\omega(\Theta_\beta(L),\lambda)\bigr)
 \overline{Y_{nm}\bigl(\omega(L,\lambda)\bigr)}
 \sin L\,\dd L\,\dd\lambda
 \\
 &=
 2\pi\mathcal N_{\ell m}\mathcal N_{nm}
 \int_0^\pi
 P_\ell^m\bigl(\cos\Theta_\beta(L)\bigr)
 P_n^m(\cos L)
 \sin L\,\dd L.
\end{aligned}
\]
Since
\[
 \cos\Theta_\beta(L)=\widetilde x_\beta(\cos L),
\]
the substitution
\[
 x=\cos L,
 \qquad
 \dd x=-\sin L\,\dd L,
\]
gives \eqref{eq:B-integral-S2}.
}
\end{proof}

%% ------------------------------------------------------------
%% Subsection: Jones blocks of the deformed kernel
%% ------------------------------------------------------------
\subsection{Jones blocks of the deformed kernel}
\label{subsec:pullback-jones-coefficients}

We use the connection coefficients from the previous
theorem and the isotropic coefficients \(\kap_\ell\) to derive a formula
for the entries of the Jones blocks of \(K_\beta\).

\begin{theorem}[Jones coefficients]
\label{thm:coefficient-jones-blocks}
Let \(K_{\iso}\) be the isotropic kernel in
\eqref{eq:Kiso-expansion-S2}, where \(\kap_\ell\ge0\) and
\eqref{eq:schoenberg-summability-kappa} holds, and let \(K_\beta\) be
defined by \eqref{eq:Kbeta-def}. Then
\[
 a_{nn'}^{(m,m')}(\beta)=0
 \qquad\text{whenever }m\neq m'.
\]
For \(m=m'\), write
\[
 a_{nn'}^{(m)}(\beta)
 :=
 a_{nn'}^{(m,m)}(\beta).
\]
The kernel consequently has the Jones-block expansion
\begin{equation}
\label{eq:pullback-jones-block-expansion}
 K_\beta(s,t)
 =
 \sum_{m\in\Z}
 \sum_{n,n'\ge |m|}
 a_{nn'}^{(m)}(\beta)
 Y_{nm}(s)
 \overline{Y_{n'm}(t)},
\end{equation}
with convergence in \(L^2(\Sph^2\times\Sph^2)\).
Its block coefficients are given by
\begin{equation}
\label{eq:jones-block-coefficients}
 a_{nn'}^{(m)}(\beta)
 =
 \sum_{\ell\ge |m|}
 \kap_\ell
 B_n^{(\ell,m)}(\beta)
 \overline{B_{n'}^{(\ell,m)}(\beta)}.
\end{equation}
\end{theorem}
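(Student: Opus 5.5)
The plan is to compute the coefficients \(a_{nn'}^{(m,m')}(\beta)\) directly from the composed series \eqref{eq:Kbeta-composed-series}. We first check that this series may be integrated term by term against \(\overline{Y_{nm}(s)}Y_{n'm'}(t)\). The addition theorem gives, for all \(s,t\),
\[
\sum_{m=-\ell}^{\ell}\bigl|Y_{\ell m}(T_\beta s)\bigr|^2=\frac{2\ell+1}{4\pi}.
\]
Cauchy--Schwarz then bounds the \(\ell\)-th inner sum in \eqref{eq:Kbeta-composed-series} by \(\kap_\ell(2\ell+1)/(4\pi)\), uniformly in \(s,t\). By \eqref{eq:schoenberg-summability-kappa}, the series over \(\ell\) converges absolutely and uniformly on \(\Sph^2\times\Sph^2\). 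Uniform convergence on a finite measure space justifies exchanging \(\sum_\ell\) with the double integral in \eqref{eq:canonical-pullback-coefficients}. Each \(\ell\)-term involves only finitely many \(m\), so the inner sum can be exchanged as well.

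Next we evaluate a single term. By Fubini, the double integral factors as
\[
\int_{\Sph^2} Y_{\ell k}(T_\beta s)\overline{Y_{nm}(s)}\dd\omega(s)\cdot
\overline{\int_{\Sph^2} Y_{\ell k}(T_\beta t)\overline{Y_{n'm'}(t)}\dd\omega(t)},
\]
where \(k\) is the summation index over orders. Theorem~\ref{thm:no-mixing-S2} identifies each factor: the first equals \(\delta_{km}B_n^{(\ell,m)}(\beta)\) and the second equals \(\delta_{km'}\overline{B_{n'}^{(\ell,m')}(\beta)}\). The Kronecker deltas come from the vanishing longitude integral. Summing over \(k\) therefore gives zero when \(m\neq m'\). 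When \(m=m'\), it leaves \(\kap_\ell B_n^{(\ell,m)}\overline{B_{n'}^{(\ell,m)}}\) for each \(\ell\ge|m|\), since \(Y_{\ell m}\) exists only for \(|m|\le\ell\). This proves \eqref{eq:jones-block-coefficients} and the selection rule. As a cross-check, the selection rule also follows from axial symmetry (Corollary~\ref{cor:pullback-axial}) together with \eqref{eq:axial-selection-rule}.

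Finally, the expansion \eqref{eq:pullback-jones-block-expansion} follows from \eqref{eq:Kbeta-general-harmonic-expansion}. The products \(Y_{nm}(s)\overline{Y_{n'm'}(t)}\) form an orthonormal basis of \(L^2(\Sph^2\times\Sph^2)\), and the continuous kernel \(K_\beta\) lies in that space. Its expansion in this basis therefore converges in \(L^2\), and deleting the vanishing terms with \(m\neq m'\) yields the Jones-block form.

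The only delicate step is justifying the interchange of summation and integration. The uniform addition-theorem bound settles it, so no convergence question about the series \eqref{eq:jones-block-coefficients} itself arises. That series converges absolutely because \(|B_n^{(\ell,m)}|\le\|Y_{\ell m}\circ T_\beta\|_{L^2}\) and \(\|Y_{\ell m}\circ T_\beta\|_{L^2}^2\le 4\pi\cdot\frac{2\ell+1}{4\pi}\), which is summable against \(\kap_\ell\).
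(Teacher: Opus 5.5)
Your proposal is correct and follows the paper's proof. You substitute the composed series \eqref{eq:Kbeta-composed-series}, use Theorem~\ref{thm:no-mixing-S2} to produce the Kronecker deltas, bound $|B_n^{(\ell,m)}(\beta)|\le(2\ell+1)^{1/2}$ (you by $L^2$ Cauchy--Schwarz, the paper by an $L^\infty$--$L^1$ estimate), and read off the Jones-block form from the $L^2$ expansion; your up-front uniform-convergence argument also explicitly justifies the term-by-term integration, which the paper does only as a ``reordering'' and supports afterwards with an absolute-convergence bound on the coefficient series rather than on the exchange itself.
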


\begin{proof}
{
Substituting \eqref{eq:Kbeta-composed-series} into the definition
\eqref{eq:canonical-pullback-coefficients} and reordering yields 
\begin{equation*}
a_{nn'}^{(m,m')}(\beta)
:= \sum_{\ell=0}^{\infty}\kap_\ell \sum_{r=-\ell}^{\ell}
\int_{\Sph^2}\int_{\Sph^2}
Y_{\ell r}\bigl(T_\beta(s)\bigr)
\overline{Y_{\ell r}\bigl(T_\beta(t)\bigr)}
\overline{Y_{nm}(s)}
Y_{n'm'}(t)
\,\dd\omega(t)\,\dd\omega(s).
\end{equation*}
For each \(\ell\) and \(r\),
the integral with respect to \(s\) is
\[
 \int_{\Sph^2}
 Y_{\ell r}\bigl(T_\beta(s)\bigr)
 \overline{Y_{nm}(s)}
 \,\dd\omega(s)
 =
 B_n^{(\ell,r)}(\beta)\delta_{rm},
\]
by Theorem~\ref{thm:no-mixing-S2}. Similarly, the integral with respect
to \(t\) is
\[
 \int_{\Sph^2}
 \overline{Y_{\ell r}\bigl(T_\beta(t)\bigr)}
 Y_{n'm'}(t)
 \,\dd\omega(t)
 =
 \overline{B_{n'}^{(\ell,r)}(\beta)}\delta_{rm'}.
\]
It follows that
\[
 a_{nn'}^{(m,m')}(\beta)
 =
 \sum_{\ell=0}^{\infty}\kap_\ell
 \sum_{r=-\ell}^{\ell}
 B_n^{(\ell,r)}(\beta)
 \overline{B_{n'}^{(\ell,r)}(\beta)}
 \delta_{rm}\delta_{rm'}.
\]
Hence \(a_{nn'}^{(m,m')}(\beta)=0\) for \(m\neq m'\), while for
\(m=m'\),
\[
 a_{nn'}^{(m)}(\beta)
 =
 \sum_{\ell\ge|m|}
 \kap_\ell
 B_n^{(\ell,m)}(\beta)
 \overline{B_{n'}^{(\ell,m)}(\beta)}.
\]
This proves \eqref{eq:jones-block-coefficients}.

It remains to justify the series in
\eqref{eq:jones-block-coefficients}. By the definition of the connection
coefficients,
\[
 \bigl|B_n^{(\ell,m)}(\beta)\bigr|
 \le
 \|Y_{\ell m}\circ T_\beta\|_{L^\infty(\Sph^2)}
 \|Y_{nm}\|_{L^1(\Sph^2)}.
\]
The addition theorem and Cauchy--Schwarz give
\[
 \|Y_{\ell m}\circ T_\beta\|_{L^\infty(\Sph^2)}
 \le
 \left(\frac{2\ell+1}{4\pi}\right)^{1/2},
 \qquad
 \|Y_{nm}\|_{L^1(\Sph^2)}
 \le
 (4\pi)^{1/2}.
\]
Thus
\[
 \bigl|B_n^{(\ell,m)}(\beta)\bigr|
 \le
 (2\ell+1)^{1/2},
\]
and therefore
\[
 \sum_{\ell\ge|m|}
 \kap_\ell
 \bigl|B_n^{(\ell,m)}(\beta)\bigr|
 \bigl|B_{n'}^{(\ell,m)}(\beta)\bigr|
 \le
 \sum_{\ell\ge|m|}
 (2\ell+1)\kap_\ell
 <\infty.
\]

Finally, \(K_\beta\) is continuous on the compact space
\(\Sph^2\times\Sph^2\), so it belongs to
\(L^2(\Sph^2\times\Sph^2)\). Its spherical-harmonic expansion therefore
converges in \(L^2\). Since all coefficients with \(m\neq m'\) vanish,
the general expansion \eqref{eq:Kbeta-general-harmonic-expansion}
reduces to \eqref{eq:pullback-jones-block-expansion}.
} 
\end{proof}

%% ------------------------------------------------------------
%% Subsection: Explicit formula for the connection coefficients
%% ------------------------------------------------------------
\subsection{Explicit formula for the connection coefficients}
\label{subsec:explicit-formula}

We use \eqref{eq:gamma-def-main} and Euler's integral representation
to write the connection coefficients as a finite sum involving
Beta and Gauss hypergeometric functions.

We recall the standard Beta-function identity
\begin{equation}
\label{eq:Euler-beta-function}
 \Bfun(a,b)
 =
 \int_0^1 t^{a-1}(1-t)^{b-1}\,\dd t
 =
 \frac{\Gamma(a)\Gamma(b)}{\Gamma(a+b)},
 \qquad a,b>0.
\end{equation}
The Gauss hypergeometric function is initially defined by
\begin{equation}
\label{eq:Gauss-hypergeometric-series}
 {}_2F_1(a,b;c;z)
 =
 \sum_{r=0}^{\infty}
 \frac{(a)_r(b)_r}{(c)_r\,r!}z^r,
 \qquad |z|<1,
\end{equation}
and extended by analytic continuation. For
\(c>b>0\) and real \(z<1\), Euler's integral representation is
\begin{equation}
\label{eq:Euler-hypergeometric-integral}
 \Bfun(b,c-b)\,{}_2F_1(a,b;c;z)
 =
 \int_0^1
 t^{b-1}(1-t)^{c-b-1}(1-zt)^{-a}\,\dd t;
\end{equation}
see \cite[Eq.~15.6.1]{NIST:DLMF}. In our application,
\(z=1-\beta^2<1\), so \eqref{eq:Euler-hypergeometric-integral} applies for
every \(\beta>0\).

\begin{theorem}[Explicit connection coefficients]
\label{thm:explicit-S2}
Let \(\ell\ge0\), \(|m|\le\ell\), and \(n\ge|m|\), and set
\(\mu=|m|\). If \(\ell+n\) is odd, then
\(B_n^{(\ell,m)}(\beta)=0\). If \(\ell+n\) is even, then
\begin{align}
\label{eq:explicit-B-S2}
 B_n^{(\ell,m)}(\beta)
 &=
 2\pi\mathcal N_{\ell\mu}\mathcal N_{n\mu}
 \sum_{k=0}^{R_{\ell,\mu}}
 \sum_{j=0}^{R_{n,\mu}}
 \gamma_{\ell,\mu,k}\gamma_{n,\mu,j}
 \beta^{\ell-\mu-2k}
 \notag\\
 &\quad\times
 \Bfun\left(p_{k,j}+\frac12,\mu+1\right)
 {}_2F_1\left(
 s_k,p_{k,j}+\frac12;
 p_{k,j}+\mu+\frac32;
 1-\beta^2
 \right),
\end{align}
where \(R_{a,\mu}\) and
\(\gamma_{a,\mu,k}\), for \(a\in\{\ell,n\}\), are defined in
\eqref{eq:gamma-def-main}, and
\begin{equation}
\label{eq:sk-pkj-S2}
 s_k=\frac{\ell-2k}{2},
 \qquad
 p_{k,j}
 =
 \frac{\ell+n-2\mu-2(k+j)}{2}
 \in\mathbb N_0.
\end{equation}
\end{theorem}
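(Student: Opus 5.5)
Starting from the integral formula \eqref{eq:B-integral-S2} of Theorem~\ref{thm:no-mixing-S2}, we insert the finite representation \eqref{eq:associated-Legendre-finite} for both factors. Since \(1-\widetilde x_\beta(x)^2=(1-x^2)/(1+(\beta^2-1)x^2)\), we have
\[
P_\ell^\mu\bigl(\widetilde x_\beta(x)\bigr)
=\sum_{k=0}^{R_{\ell,\mu}}\gamma_{\ell,\mu,k}\,
\frac{(1-x^2)^{\mu/2}\,\beta^{\ell-\mu-2k}x^{\ell-\mu-2k}}
{\bigl(1+(\beta^2-1)x^2\bigr)^{(\ell-2k)/2}},
\]
where the exponent in the denominator is \(\mu/2+(\ell-\mu-2k)/2=s_k\). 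Multiplying by \(P_n^\mu(x)=(1-x^2)^{\mu/2}\sum_j\gamma_{n,\mu,j}x^{n-\mu-2j}\) expresses \(B_n^{(\ell,m)}(\beta)\) as \(2\pi\mathcal N_{\ell\mu}\mathcal N_{n\mu}\) times a finite double sum of integrals
\[
I_{k,j}=\int_{-1}^{1}\frac{x^{\ell+n-2\mu-2k-2j}(1-x^2)^{\mu}}{\bigl(1+(\beta^2-1)x^2\bigr)^{s_k}}\,\dd x .
\]

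Next comes the parity step. The integrand of \(I_{k,j}\) equals \(x^{\ell+n-2\mu-2k-2j}\) times an even function, so it is odd precisely when \(\ell+n\) is odd, and then every \(I_{k,j}\) vanishes, giving \(B_n^{(\ell,m)}(\beta)=0\). (Equivalently, \(\widetilde x_\beta\) is odd and \(P_a^\mu(-x)=(-1)^{a-\mu}P_a^\mu(x)\).) When \(\ell+n\) is even, the exponent is \(2p_{k,j}\) with \(p_{k,j}\in\mathbb N_0\), because \(k\le(\ell-\mu)/2\) and \(j\le(n-\mu)/2\). We then write \(I_{k,j}=2\int_0^1\cdots\) and substitute \(t=x^2\), \(\dd x=\tfrac12 t^{-1/2}\dd t\), which gives
\[
I_{k,j}=\int_0^1 t^{p_{k,j}-1/2}(1-t)^{\mu}\bigl(1-(1-\beta^2)t\bigr)^{-s_k}\,\dd t .
\]
This matches Euler's representation \eqref{eq:Euler-hypergeometric-integral} with \(a=s_k\), \(b=p_{k,j}+\tfrac12\), \(c-b=\mu+1\) (so \(c=p_{k,j}+\mu+\tfrac32\)) and \(z=1-\beta^2<1\). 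The hypotheses \(c>b>0\) hold, so \(I_{k,j}=\Bfun(p_{k,j}+\tfrac12,\mu+1)\,{}_2F_1(s_k,p_{k,j}+\tfrac12;p_{k,j}+\mu+\tfrac32;1-\beta^2)\). Collecting the factors \(\gamma_{\ell,\mu,k}\gamma_{n,\mu,j}\beta^{\ell-\mu-2k}\) yields \eqref{eq:explicit-B-S2}.

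Two points need care. The first is the algebra of the composed associated Legendre function: the half-integer powers from \((1-\widetilde x^2)^{\mu/2}\) and from \(\widetilde x^{\ell-\mu-2k}\) must combine into the single denominator power \(s_k=(\ell-2k)/2\). This works because \(\sqrt{1+(\beta^2-1)x^2}>0\) on \([-1,1]\) for every \(\beta>0\), so no branch issues arise. The second is the exchange of the finite sums with the integral, which is trivial because the sums are finite. The case \(m<0\) reduces to \(\mu=|m|\) through \eqref{eq:B-integral-S2}, which already depends only on \(\mu\).
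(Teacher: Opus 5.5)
Your proposal is correct and follows essentially the same route as the paper's proof in Appendix~\ref{app:explicit-S2}. Both arguments insert the finite associated-Legendre expansions, using the identities for $1-\widetilde x_\beta^2$ and $\widetilde x_\beta^{\,q}$. Both then dispose of the case $\ell+n$ odd by parity, and evaluate each remaining even integral via $t=x^2$ and Euler's representation with $a=s_k$, $b=p_{k,j}+\tfrac12$, $c=p_{k,j}+\mu+\tfrac32$, $z=1-\beta^2$.

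There are two small differences. You apply the parity argument termwise to each $I_{k,j}$ rather than to the full integrand. You also take \eqref{eq:associated-Legendre-finite} as given, whereas the paper re-derives it from the Gegenbauer expansion.
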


\begin{proof}
See Appendix~\ref{app:explicit-S2}. It proves the parity rule, derives the
finite associated-Legendre expansions, and evaluates the remaining integrals using \eqref{eq:Euler-hypergeometric-integral}.
\end{proof}

%% ------------------------------------------------------------
%% Subsection: Geometric decay for a fixed input mode
%% ------------------------------------------------------------
\subsection{Geometric decay for a fixed input mode}
\label{subsec:fixed-mode-decay}

We now fix an input mode \((\ell,m)\) and study
\(B_n^{(\ell,m)}(\beta)\) as \(n\to\infty\). We show that the
coefficients decay geometrically, with constants that may depend
on \((\ell,m)\). However, this estimate alone does not give a
uniform bound for an entire Jones block.

Fix \(m\in\Z\), set \(\mu=|m|\), and let \(\ell\ge\mu\). If
\(\beta=1\), then \(T_\beta\) is the identity and
\(B_n^{(\ell,m)}(1)=\delta_{n\ell}\). We therefore assume
\(\beta\neq1\).

By \eqref{eq:associated-Legendre-finite}, together with
\[
 \widetilde x_\beta(x)
 =
 \frac{\beta x}{\sqrt{1+(\beta^2-1)x^2}},
 \qquad
 1-\widetilde x_\beta(x)^2
 =
 \frac{1-x^2}{1+(\beta^2-1)x^2},
\]
we obtain
\begin{equation}
\label{eq:H-factor-main}
 P_\ell^\mu\bigl(\widetilde x_\beta(x)\bigr)
 =
 (1-x^2)^{\mu/2}H_{\ell,\mu,\beta}(x),
\end{equation}
where
\begin{equation}
\label{eq:H-def-main}
 H_{\ell,\mu,\beta}(x)
 =
 \sum_{k=0}^{R_{\ell,\mu}}
 \gamma_{\ell,\mu,k}
 \beta^{\ell-\mu-2k}
 x^{\ell-\mu-2k}
 \bigl(1+(\beta^2-1)x^2\bigr)^{-(\ell-2k)/2}.
\end{equation}

By \eqref{eq:H-factor-main}, the integral in
\eqref{eq:B-integral-S2} becomes
\[
\int_{-1}^{1}
H_{\ell,\mu,\beta}(x)
P_n^\mu(x)
(1-x^2)^{\mu/2}\,\dd x.
\]
{
Using the relation between associated Legendre and Gegenbauer polynomials,
this integral is, up to explicit normalization factors, a Gegenbauer
coefficient of \(H_{\ell,\mu,\beta}\) of degree \(n-\mu\) and parameter
\(\mu+\frac12\). Therefore we can employ the geometric coefficient estimate introduced in Wang~\cite[Theorem~4.3]{Wang2016Gegenbauer} to bound our coefficients.

}

\begin{theorem}[Geometric decay for a fixed input mode]
\label{thm:fixed-mode-decay}
Fix \(m\in\Z\), set \(\mu=|m|\), and let \(\ell\ge\mu\). Suppose that
\(\beta>0\) and \(\beta\neq1\), and define
\begin{equation}
\label{eq:rho-beta-def}
 \rho_\beta
 =
 \sqrt{\frac{1+\beta}{|1-\beta|}},
 \qquad
 r_\beta
 =
 \rho_\beta^{-1}
 =
 \sqrt{\frac{|1-\beta|}{1+\beta}}.
\end{equation}
Then, for every \(1<\rho<\rho_\beta\), there exists
\(C_{\ell,m,\beta,\rho}>0\) such that
\begin{equation}
\label{eq:fixed-mode-geometric-decay}
 \bigl|B_n^{(\ell,m)}(\beta)\bigr|
 \le
 C_{\ell,m,\beta,\rho}\rho^{-n},
 \qquad n\ge\mu.
\end{equation}
Consequently,
\begin{equation}
\label{eq:limsup-decay}
 \limsup_{n\to\infty}
 \bigl|B_n^{(\ell,m)}(\beta)\bigr|^{1/n}
 \le r_\beta.
\end{equation}
\end{theorem}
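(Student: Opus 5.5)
We reduce the connection coefficient to a Gegenbauer coefficient of an analytic function and use a Bernstein-ellipse bound, either via Wang's estimate cited above or directly by the standard contour argument.

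\emph{Step 1: rewrite the integral as a Gegenbauer coefficient.} By \eqref{eq:H-factor-main} and \eqref{eq:associated-Legendre-Gegenbauer},
\[
\int_{-1}^1 P_\ell^\mu(\widetilde x_\beta(x))P_n^\mu(x)\,\dd x
= d_\mu\int_{-1}^1 H_{\ell,\mu,\beta}(x)\,C_{n-\mu}^{(\mu+\frac12)}(x)(1-x^2)^{\mu}\,\dd x .
\]
This is, up to the factor \(d_\mu h_{n-\mu}\), where \(h_q\) is the squared norm of \(C_q^{(\mu+1/2)}\) in \(L^2((1-x^2)^{\mu}\dd x)\), the Gegenbauer coefficient \(g_{n-\mu}\) of \(H_{\ell,\mu,\beta}\) for the parameter \(\lambda=\mu+\tfrac12\). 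With \(\mathcal N_{n\mu}\) included, the prefactor \(2\pi\mathcal N_{\ell\mu}\mathcal N_{n\mu}d_\mu h_{n-\mu}\) grows at most polynomially in \(n\). This follows from \(h_q\sim c\,\Gamma(q+2\lambda)/(q!\,(q+\lambda))\) and \(\mathcal N_{n\mu}\sim n^{1/2-\mu}\). It therefore suffices to show \(|g_q|\le C\rho'^{-q}\) for every \(\rho'<\rho_\beta\), and to absorb the polynomial factor by choosing \(\rho<\rho'<\rho_\beta\).

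\emph{Step 2: locate the singularities of \(H_{\ell,\mu,\beta}\).} Each term of \eqref{eq:H-def-main} is a polynomial times \((1+(\beta^2-1)x^2)^{-(\ell-2k)/2}\). Such a term is analytic wherever \(1+(\beta^2-1)x^2\) avoids the branch cut. The only zeros are at \(x=\pm i/\sqrt{\beta^2-1}\) when \(\beta>1\), and at \(x=\pm 1/\sqrt{1-\beta^2}\), which is real with modulus greater than \(1\), when \(\beta<1\). We take the principal branch on the region where \(\operatorname{Re}(1+(\beta^2-1)x^2)>0\), or more carefully on the slit plane. We then check that the largest Bernstein ellipse \(E_R=\{(z+z^{-1})/2:|z|=R\}\) on which \(H\) is analytic has \(R=\rho_\beta\). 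For \(\beta>1\) the singular point \(x_0=i/\sqrt{\beta^2-1}\) gives \(R=|x_0|+\sqrt{|x_0|^2+1}=(1+\beta)/\sqrt{\beta^2-1}=\sqrt{(1+\beta)/(\beta-1)}\). For \(\beta<1\) the point \(x_0=1/\sqrt{1-\beta^2}\) gives \(R=x_0+\sqrt{x_0^2-1}=(1+\beta)/\sqrt{1-\beta^2}\). Both agree with \eqref{eq:rho-beta-def}. Note that for the largest analytic ellipse the formula \(|x_0|+\sqrt{x_0^2\pm1}\) must be applied to the nearest singularity.

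\emph{Step 3: decay of the Gegenbauer coefficients.} For \(1<\rho'<\rho_\beta\) the function \(H\) is analytic and bounded by \(M_{\rho'}=\max_{E_{\rho'}}|H|\). Wang's Theorem~4.3 then gives \(|g_q|\le C_{\lambda}M_{\rho'}\,q^{1-\lambda}\rho'^{-q}\) or a similar bound, with at most polynomial growth in \(q\). If one prefers to avoid the citation, write \(g_q=h_q^{-1}\int H C_q^{(\lambda)}(1-x^2)^{\lambda-1/2}\dd x\), use Rodrigues' formula, and integrate by parts. Alternatively, use the Chebyshev route: Chebyshev coefficients decay like \(2M_{\rho'}\rho'^{-q}\), and the Gegenbauer coefficients are finite combinations with polynomially bounded connection coefficients. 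Combining this bound with Step 1 and the choice \(\rho<\rho'\) yields \eqref{eq:fixed-mode-geometric-decay}. Taking \(n\)-th roots and letting \(\rho\uparrow\rho_\beta\) gives \eqref{eq:limsup-decay}.

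The main obstacle is Step 2 together with the branch bookkeeping: the half-integer powers \((\ell-2k)/2\) must have a single-valued analytic continuation inside \(E_{\rho'}\). When \(\ell-2k\) is odd, the square root's branch cut must be placed outside the ellipse. We will check that \(1+(\beta^2-1)z^2\) does not meet \((-\infty,0]\) inside \(E_{\rho'}\), by the preimage computation of that ray. The remaining work is tracking the polynomial growth of the normalizations.
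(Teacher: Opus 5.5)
Your proposal is correct and follows essentially the same route as the paper: you write the integral as a Gegenbauer coefficient of $H_{\ell,\mu,\beta}$ with parameter $\mu+\frac12$, apply Wang's Bernstein-ellipse estimate, and determine the admissible ellipses from the zeros of $1+(\beta^2-1)z^2$. The differences are cosmetic. You use the principal branch and check that the preimage of $(-\infty,0]$ lies outside $E_{\rho'}$, whereas the paper takes an abstract holomorphic square root on the simply connected interior of a slightly larger ellipse. You also absorb the polynomial prefactor by shrinking $\rho$, whereas the paper shows directly that $\mathcal N_{n\mu}h_q q^{1-\nu}=O(1)$.
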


\begin{proof}
See Appendix~\ref{app:decay-S2}.
\end{proof}

For each fixed input mode, the connection coefficients
decay geometrically as the output degree increases. The upper bound \(r_\beta\) on the asymptotic geometric decay rate
tends to zero as \(\beta\) approaches \(1\), which is consistent with
the absence of degree coupling when \(\beta=1\).

%% ------------------------------------------------------------
%% Subsection: Native spaces and Sobolev regularity
%% ------------------------------------------------------------
\subsection{Native spaces and Sobolev regularity}
\label{subsec:sobolev-consequences}

Since \(T_\beta\) is a smooth diffeomorphism, the general transport results
from Section~\ref{sec:general-pullbacks} apply directly.

\begin{corollary}[Native space of the pullback kernel]
\label{cor:nativespace-canonical-pullback}
Assume the hypotheses of
Proposition~\ref{prop:isotropic-native-space-spectral}. Then
{
\begin{equation}
\label{eq:canonical-native-space-equivalence}
 \Nspace(K_\beta)\equiv H^\tau(\Sph^2).
\end{equation}
}
\end{corollary}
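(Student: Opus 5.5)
The corollary follows by chaining three earlier results.

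First, Proposition~\ref{prop:isotropic-native-space-spectral} applies under the assumed two-sided spectral bounds \eqref{eq:two-sided-spectral-assumption} with \(\tau>1\). It gives \(\Nspace(K_{\iso})\equiv H^\tau(\Sph^2)\), with the explicit constants \(c_\pm\).

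Second, by Proposition~\ref{prop:diffeo-equivariance}, \(T_\beta\) is a \(C^\infty\)-diffeomorphism of \(\Sph^2\) with smooth inverse \(T_{\beta^{-1}}\). Moreover \(K_\beta\) is exactly the pullback \(K_{T_\beta}\) of \(K_{\iso}\) in the sense of \eqref{eq:general-pullback-kernel}. So the hypotheses of Corollary~\ref{cor:native-transport} hold with \(K=K_{\iso}\), \(\Phi=T_\beta\) and the same \(\tau\), and that corollary yields \eqref{eq:canonical-native-space-equivalence} directly.

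To make the norm equivalence explicit, I would write out the chain of estimates. For \(h\in\Nspace(K_\beta)\), Corollary~\ref{cor:bijective-rkhs-pullback} gives a unique representation \(h=g\circ T_\beta\) with
\[
\|h\|_{\Nspace(K_\beta)}=\|g\|_{\Nspace(K_{\iso})}\asymp\|g\|_{H^\tau(\Sph^2)}.
\]
Proposition~\ref{prop:sobolev-stability-diffeomorphism} then gives
\[
\|g\|_{H^\tau(\Sph^2)}=\|h\circ T_{\beta^{-1}}\|_{H^\tau(\Sph^2)}\asymp_{\beta,\tau}\|h\|_{H^\tau(\Sph^2)}.
\]
Conversely, any \(h\in H^\tau(\Sph^2)\) has \(g=h\circ T_{\beta^{-1}}\in H^\tau(\Sph^2)=\Nspace(K_{\iso})\), so \(h=g\circ T_\beta\in\Nspace(K_\beta)\). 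Combining the two directions gives set equality and two-sided bounds, with constants \(c_\pm^{-1}\) times constants depending only on \(\beta\) and \(\tau\).

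There is no genuine obstacle here, since all the analytic content lives in the earlier results. The one point needing care is that the Sobolev stability of Proposition~\ref{prop:sobolev-stability-diffeomorphism} covers fractional \(\tau>1\), which holds through the interpolation argument cited there. One should also check that smoothness of \(T_\beta\) is global, including at the poles and the equator. This is immediate because \(A_\beta s\neq0\) and \(T_\beta\) is the restriction of a smooth map on \(\R^3\setminus\{0\}\).
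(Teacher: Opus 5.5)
Your proposal is correct and matches the paper's proof, which likewise obtains $\Nspace(K_{\iso})\equiv H^\tau(\Sph^2)$ from Proposition~\ref{prop:isotropic-native-space-spectral} and then applies Corollary~\ref{cor:native-transport} with $\Phi=T_\beta$, using that $T_\beta$ is a $C^\infty$-diffeomorphism. Your explicit chain of estimates just unrolls the proof of Corollary~\ref{cor:native-transport}; one small correction is that, for norms rather than squared norms, the isotropic constants enter as $c_\pm^{-1/2}$, not $c_\pm^{-1}$.
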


\begin{proof}
Proposition~\ref{prop:isotropic-native-space-spectral} gives
\[
 \Nspace(K_{\iso})\equiv H^\tau(\Sph^2).
\]
Since \(T_\beta\) is a \(C^\infty\)-diffeomorphism, the result follows from
Corollary~\ref{cor:native-transport} with \(\Phi=T_\beta\).
\end{proof}

The native space has the same Sobolev order before
and after deformation, even though the covariance structure and
the harmonic coefficients can change. We now use Sobolev duality
to derive estimates for the Jones blocks.

\begin{corollary}
\label{cor:sobolev-jones-bound}
Assume \eqref{eq:two-sided-spectral-assumption} for some \(\tau>1\), and let
\(\beta>0\). Then there exist constants
\(c_{\beta,\tau},C_{\beta,\tau}>0\), independent of \(m\) and the harmonic
degrees, such that the following holds.

Fix \(m\in\Z\), set \(\mu=|m|\), and let \(N\ge\mu\).
Then, for every choice of coefficients
\(\widehat f_{nm}\), \(n=\mu,\ldots,N\),
\begin{align}
\label{eq:jones-block-two-sided-sobolev-bound}
c_{\beta,\tau}c_-
\sum_{n=\mu}^{N}
(1+\Lambda_n)^{-\tau}
\bigl|\widehat f_{nm}\bigr|^2
&\le
\sum_{n,n'=\mu}^{N}
a_{nn'}^{(m)}(\beta)
\widehat f_{n'm}
\overline{\widehat f_{nm}}
\notag\\
&\le
C_{\beta,\tau}c_+
\sum_{n=\mu}^{N}
(1+\Lambda_n)^{-\tau}
\bigl|\widehat f_{nm}\bigr|^2.
\end{align}
In particular,
\begin{equation}
\label{eq:jones-diagonal-two-sided-bound}
c_{\beta,\tau}c_-
(1+\Lambda_n)^{-\tau}
\le
a_{nn}^{(m)}(\beta)
\le
C_{\beta,\tau}c_+
(1+\Lambda_n)^{-\tau},
\qquad n\ge|m|,
\end{equation}
and
\begin{equation}
\label{eq:jones-off-diagonal-upper-bound}
\bigl|a_{nn'}^{(m)}(\beta)\bigr|
\le
C_{\beta,\tau}c_+
(1+\Lambda_n)^{-\tau/2}
(1+\Lambda_{n'})^{-\tau/2}.
\end{equation}
\end{corollary}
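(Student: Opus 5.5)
The plan is to express the Jones-block quadratic form as the action of the kernel integral operator on a band-limited test function, move to the undeformed kernel by a change of variables, and then use Sobolev duality together with the fact that composition with \(T_\beta^{-1}\) is bounded on negative-order Sobolev spaces.

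Fix \(m\) and coefficients \(\widehat f_{nm}\), \(n=\mu,\ldots,N\), and set \(f=\sum_{n=\mu}^N \widehat f_{nm}Y_{nm}\). By the definition \eqref{eq:canonical-pullback-coefficients} and Theorem~\ref{thm:coefficient-jones-blocks},
\[
Q:=\sum_{n,n'}a_{nn'}^{(m)}(\beta)\widehat f_{n'm}\overline{\widehat f_{nm}}
=\int\int K_\beta(s,t)\overline{f(s)}f(t)\,\dd\omega(t)\dd\omega(s),
\]
up to checking the placement of conjugates. I then substitute \(u=T_\beta(s)\) and \(v=T_\beta(t)\). Let \(J\) be the Jacobian of \(T_\beta^{-1}\); it is smooth and bounded above and below by positive constants depending only on \(\beta\). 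With \(g:=(f\circ T_\beta^{-1})J\) this gives
\[
Q=\int\int K_{\iso}(u,v)\overline{g(u)}g(v)\,\dd\omega\,\dd\omega
=\sum_\ell\kap_\ell\sum_r|\widehat g_{\ell r}|^2 .
\]
By \eqref{eq:two-sided-spectral-assumption}, \(Q\) is then comparable to \(\|g\|_{H^{-\tau}}^2\), with constants \(c_\pm\).

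The next step compares \(\|g\|_{H^{-\tau}}\) with \(\|f\|_{H^{-\tau}}=\bigl(\sum(1+\Lambda_n)^{-\tau}|\widehat f_{nm}|^2\bigr)^{1/2}\). By duality,
\[
\|g\|_{H^{-\tau}}=\sup_{\|h\|_{H^\tau}\le1}\Bigl|\int g\,\overline h\Bigr|=\sup_{\|h\|_{H^\tau}\le1}\Bigl|\int f\,\overline{h\circ T_\beta}\Bigr|,
\]
after undoing the substitution. Proposition~\ref{prop:sobolev-stability-diffeomorphism} says \(h\mapsto h\circ T_\beta\) is an isomorphism of \(H^\tau\). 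Hence \(\|g\|_{H^{-\tau}}\asymp_{\beta,\tau}\|f\|_{H^{-\tau}}\), and applying the same argument with \(T_\beta^{-1}\) gives the reverse inequality. This yields \eqref{eq:jones-block-two-sided-sobolev-bound}.

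The main obstacle is that the constants must be independent of \(m\), \(N\) and the degrees. This holds because the duality constants depend only on \(T_\beta\) and \(\tau\), and the argument never uses the band-limitation or the order \(m\). Two points need care. First, the Jacobian factor: for the duality step I must pair \(f\) against \(h\circ T_\beta\) exactly, so I should check that the weight \(J\) cancels rather than enters as a multiplier. If it does not cancel, the pairing is against \((h\circ T_\beta)\tilde J\) with \(\tilde J\) smooth, and multiplication by a smooth positive function is an isomorphism of \(H^\tau\) (a standard chart argument as in the appendix), so the conclusion is unchanged. Second, \(\tau>1\) ensures that \(K_{\iso}\) is continuous, which justifies the interchanges.

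The diagonal bound \eqref{eq:jones-diagonal-two-sided-bound} follows by taking \(f=Y_{nm}\). The off-diagonal bound \eqref{eq:jones-off-diagonal-upper-bound} follows from Cauchy--Schwarz for the positive semidefinite Hermitian matrix \(A^{(m)}\):
\[
|a_{nn'}|\le\sqrt{a_{nn}a_{n'n'}}\le C_{\beta,\tau}c_+(1+\Lambda_n)^{-\tau/2}(1+\Lambda_{n'})^{-\tau/2}.
\]
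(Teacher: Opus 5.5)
Your proof is correct, but it reaches the two-sided bound by a different route than the paper.

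The paper goes through the native space. It uses Corollary~\ref{cor:nativespace-canonical-pullback}, which rests on the RKHS pullback isometry and on Sobolev stability. It then uses Remark~\ref{rem:dual-norm-comparison} and the reproducing property to identify the double integral with $\|f\|^2_{\Nspace(K_\beta)^*}$, and reads off the $H^{-\tau}$ bounds from the native-space norm equivalence.

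You never touch the native space. You transport the quadratic form back to $K_{\iso}$, where it is diagonal and gives exactly $c_-\|g\|^2_{H^{-\tau}}\le Q\le c_+\|g\|^2_{H^{-\tau}}$. You then move the deformation onto the test function.

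The weight does cancel exactly, so your fallback via smooth multipliers is unnecessary. Since $J(T_\beta(s))$ times the surface Jacobian of $T_\beta$ at $s$ equals $1$, one has $\langle g,h\rangle_{L^2}=\langle f,h\circ T_\beta\rangle_{L^2}$. In other words, $g=C_{T_\beta}^*f$ is the $L^2$-adjoint of the composition operator applied to $f$, and the comparison $\|g\|_{H^{-\tau}}\asymp\|f\|_{H^{-\tau}}$ is just the statement that this adjoint is an isomorphism of $H^{-\tau}$.

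What your route buys:
\begin{itemize}
\item It is self-contained: it needs only Proposition~\ref{prop:sobolev-stability-diffeomorphism} and the elementary spectral duality pairing.
\item It gives explicit constants, $C_{\beta,\tau}=\|C_{T_\beta}\|^2_{H^\tau\to H^\tau}$ and $c_{\beta,\tau}=\|C_{T_\beta^{-1}}\|^{-2}_{H^\tau\to H^\tau}$.
\item It makes the independence of the constants from $m$, $N$ and the degrees transparent, and shows that $c_\pm$ enter without loss.
\end{itemize}
The paper's route is shorter, given the corollaries already in place.

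For the off-diagonal bound, you use the nonnegative $2\times2$ principal minor of the Hermitian positive semidefinite block. The paper instead applies Cauchy--Schwarz to the series \eqref{eq:jones-block-coefficients}. The two are equivalent.
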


\begin{proof}
{
For the coefficients in the statement, define \(f\) by
\[
f
:=
\sum_{n=\mu}^{N}
\widehat f_{nm}Y_{nm}.
\]
By Corollary~\ref{cor:nativespace-canonical-pullback},
Remark~\ref{rem:dual-norm-comparison}, and the reproducing property,
there exist constants \(c_{\beta,\tau},C_{\beta,\tau}>0\) such that
\begin{equation}
\label{eq:pullback-kernel-dual-sobolev-bound}
\begin{aligned}
c_{\beta,\tau}c_-
\lVert f\rVert_{H^{-\tau}(\Sph^2)}^2
&\le
\lVert f\rVert_{\mathcal N(K_\beta)^*}^2
\\
&=
\int_{\Sph^2}\int_{\Sph^2}
K_\beta(s,t)f(t)\overline{f(s)}
\,\dd\omega(t)\,\dd\omega(s)
\\
&\le
C_{\beta,\tau}c_+
\lVert f\rVert_{H^{-\tau}(\Sph^2)}^2.
\end{aligned}
\end{equation}
}

{
Expanding \(f(t)\) and \(\overline{f(s)}\), and using the definition of the
Jones coefficients, gives
\[
\begin{aligned}
&\int_{\Sph^2}\int_{\Sph^2}
K_\beta(s,t)f(t)\overline{f(s)}
\,\dd\omega(t)\,\dd\omega(s)
\\
&=
\int_{\Sph^2}\int_{\Sph^2}
K_\beta(s,t)
\left(
\sum_{n'=\mu}^{N}
\widehat f_{n'm}Y_{n'm}(t)
\right)
\left(
\sum_{n=\mu}^{N}
\overline{\widehat f_{nm}}\,
\overline{Y_{nm}(s)}
\right)
\,\dd\omega(t)\,\dd\omega(s)
\\
&=
\sum_{n,n'=\mu}^{N}
\widehat f_{n'm}
\overline{\widehat f_{nm}}
\int_{\Sph^2}\int_{\Sph^2}
K_\beta(s,t)
\overline{Y_{nm}(s)}
Y_{n'm}(t)
\,\dd\omega(t)\,\dd\omega(s)
\\
&=
\sum_{n,n'=\mu}^{N}
a_{nn'}^{(m)}(\beta)
\widehat f_{n'm}
\overline{\widehat f_{nm}}.
\end{aligned}
\]
}
Moreover,
\[
\lVert f\rVert_{H^{-\tau}(\Sph^2)}^2
=
\sum_{n=\mu}^{N}
(1+\Lambda_n)^{-\tau}
\bigl|\widehat f_{nm}\bigr|^2.
\]
Substitution into
\eqref{eq:pullback-kernel-dual-sobolev-bound} gives
\eqref{eq:jones-block-two-sided-sobolev-bound}. Taking \(f=Y_{nm}\) gives
\eqref{eq:jones-diagonal-two-sided-bound}.

Finally, \eqref{eq:jones-block-coefficients} and Cauchy--Schwarz give
\[
\begin{aligned}
\bigl|a_{nn'}^{(m)}(\beta)\bigr|^2
&\le
\left(
\sum_{\ell\ge|m|}
\kap_\ell
\bigl|B_n^{(\ell,m)}(\beta)\bigr|^2
\right)
\left(
\sum_{\ell\ge|m|}
\kap_\ell
\bigl|B_{n'}^{(\ell,m)}(\beta)\bigr|^2
\right)
\\
&=
a_{nn}^{(m)}(\beta)
a_{n'n'}^{(m)}(\beta).
\end{aligned}
\]
Applying the upper diagonal bound to both factors gives
\eqref{eq:jones-off-diagonal-upper-bound}.
\end{proof}

%% ------------------------------------------------------------
%% Subsection: Extension to an arbitrary axis
%% ------------------------------------------------------------
\subsection{Extension to an arbitrary axis}
\label{subsec:arbitrary-axis}

Let \(u\in\Sph^2\), choose \(Q_u\in\SO(3)\) with \(Q_ue_3=u\), and define
\begin{equation}
\label{eq:Aubeta-def}
 A_{u,\beta}
 =
 I_3+(\beta-1)uu^\top,
 \qquad
 T_{u,\beta}(s)
 =
 \frac{A_{u,\beta}s}{\|A_{u,\beta}s\|}.
\end{equation}
Then
\begin{equation}
\label{eq:conjugacy-S2}
 A_{u,\beta}=Q_uA_\beta Q_u^\top,
 \qquad
 T_{u,\beta}=Q_u\circ T_\beta\circ Q_u^\top.
\end{equation}
Hence \(T_{u,\beta}\) is a smooth diffeomorphism with inverse
\(T_{u,\beta^{-1}}\). Moreover,
\[
 A_{-u,\beta}=A_{u,\beta},
 \qquad
 T_{-u,\beta}=T_{u,\beta},
\]
so the deformation depends only on the axis \(\R u\), regardless of
its orientation.

To work with this axis, we define the rotated spherical harmonics by
\begin{equation}
\label{eq:Yu-def-framework}
 Y_{\ell m}^{u}(s)=Y_{\ell m}(Q_u^\top s).
\end{equation}
Using \eqref{eq:conjugacy-S2} and
\eqref{eq:pullback-harmonic-S2}, we obtain
\begin{align}
\label{eq:pullback-harmonic-arbitrary-axis}
 Y_{\ell m}^{u}\bigl(T_{u,\beta}(s)\bigr)
 &=
 Y_{\ell m}\bigl(T_\beta(Q_u^\top s)\bigr)
 \notag\\
 &=
 \sum_{n\ge|m|}
 B_n^{(\ell,m)}(\beta)Y_{nm}^{u}(s).
\end{align}
Thus rotating the axis changes the adapted basis
but not the connection coefficients.

For
\begin{equation}
\label{eq:Kubeta-def}
K_{u,\beta}(s,t)
=
K_{\iso}\bigl(T_{u,\beta}(s),T_{u,\beta}(t)\bigr),
\end{equation}
let \(R\in\SO(3)\) satisfy \(Ru=u\). Then \(Q_u^\top RQ_u\) fixes
\(e_3\), so the equivariance of \(T_\beta\) gives
\[
T_{u,\beta}(Rs)=R\,T_{u,\beta}(s).
\]
By isotropy of \(K_{\iso}\),
\[
K_{u,\beta}(Rs,Rt)=K_{u,\beta}(s,t).
\]
Hence \(K_{u,\beta}\) is axially symmetric about \(u\).

Rotations preserve surface measure and act isometrically
on \(L^2(\Sph^2)\) and every \(H^\tau(\Sph^2)\). Therefore the results
for the connection coefficients, Jones coefficients, native spaces,
and Sobolev norms transfer to \(T_{u,\beta}\) and \(K_{u,\beta}\)
in the axis-adapted basis. The Sobolev constants can be chosen
independently of \(u\).

A different choice of \(Q_u\) multiplies the adapted
harmonics of a fixed order \(m\) by a common phase factor.
This factor cancels in the definitions of the connection and Jones
coefficients, so these coefficients do not depend on the choice
of \(Q_u\).
%% ------------------------------------------------------------
%% Subsection: Interpolation and error estimates
%% ------------------------------------------------------------

{
\subsection{Interpolation and error estimates}
\label{subsec:interpolation}

Interpolation with \(K_{u,\beta}\) is equivalent to
interpolation with \(K_{\iso}\) at a transformed point set. We use this
relation to transfer the known isotropic error estimates to
\(K_{u,\beta}\).

Assume throughout that \(K_{\iso}\) satisfies the coefficient decay hypotheses of
Proposition~\ref{prop:isotropic-native-space-spectral} for some
\(\tau>1\). Let
\[
X=\{x_1,\ldots,x_N\}\subset\Sph^2
\]
be a set of pairwise distinct points, and define
\[
X_{u,\beta}
:=
T_{u,\beta}(X)
=
\{T_{u,\beta}(x_1),\ldots,T_{u,\beta}(x_N)\}.
\]
Denote by \(I_X^{u,\beta}f\) the interpolant of \(f\) on \(X\) with
kernel \(K_{u,\beta}\), and by \(I_Z^{\iso}g\) the interpolant of \(g\)
on a finite set \(Z\subset\Sph^2\) with kernel \(K_{\iso}\).

\begin{proposition}
\label{prop:interpolation-transport}
Let
\[
g:=f\circ T_{u,\beta}^{-1}.
\]
Then
\begin{equation}
\label{eq:interpolation-transport}
I_X^{u,\beta}f
=
\bigl(I_{X_{u,\beta}}^{\iso}g\bigr)\circ T_{u,\beta}.
\end{equation}
\end{proposition}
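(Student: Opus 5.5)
The plan is to verify directly that the right-hand side of \eqref{eq:interpolation-transport} lies in the interpolation space for \(K_{u,\beta}\) on \(X\) and matches \(f\) on \(X\). Uniqueness of kernel interpolants then gives equality. Uniqueness holds because \(K_{u,\beta}\) is strictly positive definite. Indeed, \(K_{\iso}\) is strictly positive definite, since all \(\kap_\ell>0\) under the hypotheses of Proposition~\ref{prop:isotropic-native-space-spectral}, and \(T_{u,\beta}\) is injective, so Proposition~\ref{prop:pd-pullback} applies. Hence the collocation matrix \(\bigl(K_{u,\beta}(x_i,x_j)\bigr)_{i,j}\) is invertible and the interpolant is unique.

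First, since \(T_{u,\beta}\) is injective, the points \(z_i:=T_{u,\beta}(x_i)\) are pairwise distinct. Therefore \(I^{\iso}_{X_{u,\beta}}g\) is well defined and has the form
\[
I^{\iso}_{X_{u,\beta}}g=\sum_{j=1}^N c_j K_{\iso}(\cdot,z_j),
\]
where \(c\) solves \(\bigl(K_{\iso}(z_i,z_j)\bigr)_{i,j}\,c=\bigl(g(z_i)\bigr)_i\). Next, compose with \(T_{u,\beta}\):
\[
\bigl(I^{\iso}_{X_{u,\beta}}g\bigr)\bigl(T_{u,\beta}(s)\bigr)=\sum_{j=1}^N c_j K_{\iso}\bigl(T_{u,\beta}(s),T_{u,\beta}(x_j)\bigr)=\sum_{j=1}^N c_j K_{u,\beta}(s,x_j),
\]
by the definition \eqref{eq:Kubeta-def}. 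This shows the function lies in \(\operatorname{span}\{K_{u,\beta}(\cdot,x_j)\}\). Moreover, \(K_{u,\beta}(x_i,x_j)=K_{\iso}(z_i,z_j)\), so the two collocation matrices coincide. The data also agree, because \(g(z_i)=f\bigl(T_{u,\beta}^{-1}(T_{u,\beta}(x_i))\bigr)=f(x_i)\). So the same coefficient vector \(c\) solves the interpolation system for \(K_{u,\beta}\) on \(X\) with data \(f|_X\). Uniqueness then gives \eqref{eq:interpolation-transport}.

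No step is a genuine obstacle; the only point needing care is making sure uniqueness is available, i.e.\ strict positive definiteness. If the interpolant is instead defined variationally as the minimum-norm interpolant in \(\Nspace(K_{u,\beta})\), I would argue via Corollary~\ref{cor:bijective-rkhs-pullback}. Composition \(C_{T_{u,\beta}}\) is an isometric bijection \(\Nspace(K_{\iso})\to\Nspace(K_{u,\beta})\), and it maps functions interpolating \(g\) on \(X_{u,\beta}\) exactly onto functions interpolating \(f\) on \(X\). Therefore it maps the minimum-norm interpolant to the minimum-norm interpolant, which gives the same identity.
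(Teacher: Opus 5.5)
Your proposal is correct and matches the paper's argument. Both note that the collocation matrices $\bigl(K_{u,\beta}(x_i,x_j)\bigr)_{i,j}$ and $\bigl(K_{\iso}(T_{u,\beta}(x_i),T_{u,\beta}(x_j))\bigr)_{i,j}$ coincide, and that the data vectors agree because $g(T_{u,\beta}(x_i))=f(x_i)$; hence the coefficients are the same, and substituting the definition of $K_{u,\beta}$ gives the identity. Your explicit check of well-posedness fills in a detail the paper leaves implicit: strict positive definiteness follows from positivity of all $\kap_\ell$ on $\Sph^2$ (a standard criterion you should cite), injectivity of $T_{u,\beta}$, and Proposition~\ref{prop:pd-pullback}.
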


\begin{proof}

The two interpolation systems have the same matrix and data vector,
so they give the same coefficients. Substituting the definition of
\(K_{u,\beta}\) into the interpolant then gives
\eqref{eq:interpolation-transport}.

\end{proof}

For a finite set
\(Z\subset\Sph^2\), let
\[
h_Z
:=
\sup_{s\in\Sph^2}
\min_{z\in Z}d_{\Sph^2}(s,z)
\]
denote its geodesic fill distance.

Set
\[
L_\beta:=\max\{\beta,\beta^{-1}\}.
\]
For \(v\in T_s\Sph^2\), differentiation of \(T_\beta\) gives
\[
D T_\beta(s)v
=
\frac{
\bigl(I-T_\beta(s)T_\beta(s)^\top\bigr)A_\beta v
}{
\lVert A_\beta s\rVert
}.
\]
Since the projection in the numerator has operator norm one,
\[
\lVert D T_\beta(s)v\rVert
\le
\frac{\max\{1,\beta\}}{\min\{1,\beta\}}\lVert v\rVert
=
L_\beta\lVert v\rVert.
\]
By \eqref{eq:conjugacy-S2}, the same derivative bound
holds for \(T_{u,\beta}\). For \(s,t\in\Sph^2\), let
\(\gamma:[0,1]\to\Sph^2\) be a minimizing geodesic from \(s\) to \(t\).
Then
\[
\begin{aligned}
d_{\Sph^2}\bigl(T_{u,\beta}(s),T_{u,\beta}(t)\bigr)
&\le
\operatorname{length}(T_{u,\beta}\circ\gamma)
\\
&=
\int_0^1
\bigl\lVert
D T_{u,\beta}\bigl(\gamma(r)\bigr)\gamma'(r)
\bigr\rVert
\,\dd r
\\
&\le
L_\beta\int_0^1\lVert\gamma'(r)\rVert\,\dd r
\\
&=
L_\beta d_{\Sph^2}(s,t).
\end{aligned}
\]
Thus \(T_{u,\beta}\) is \(L_\beta\)-Lipschitz with respect to the
geodesic distance. Using the bijectivity of \(T_{u,\beta}\), we obtain
\begin{equation}
\label{eq:fill-distance-deformation}
\begin{aligned}
h_{X_{u,\beta}}
&=
\sup_{s\in\Sph^2}
\min_{x\in X}
d_{\Sph^2}\bigl(T_{u,\beta}(s),T_{u,\beta}(x)\bigr)\le
L_\beta h_X.
\end{aligned}
\end{equation}

By \eqref{eq:two-sided-spectral-assumption}, the kernel \(K_{\iso}\)
satisfies the coefficient condition of Narcowich--Sun--Ward--Wendland
\cite[Theorem~5.5]{NarcowichSunWardWendland2007}. Applying their result with
both the kernel smoothness and the target smoothness equal to \(\tau\) gives
\begin{equation}
\label{eq:isotropic-error-estimate}
\bigl\lVert g-I_Z^{\iso}g\bigr\rVert_{H^\sigma(\Sph^2)}
\le
C_{\iso,\tau,\sigma}
h_Z^{\tau-\sigma}
\lVert g\rVert_{H^\tau(\Sph^2)},
\qquad
0\le\sigma\le\tau,
\end{equation}
for every \(g\in H^\tau(\Sph^2)\) and every finite set
\(Z\subset\Sph^2\) of pairwise distinct points.

\begin{corollary}
\label{cor:interpolation-error}
For every \(f\in H^\tau(\Sph^2)\) and \(0\le\sigma\le\tau\), there
exists a constant \(C_{\beta,\tau,\sigma}>0\), independent of \(u\),
\(X\), and \(f\), such that
\begin{equation}
\label{eq:deformed-error-estimate}
\begin{aligned}
\bigl\lVert f-I_X^{u,\beta}f\bigr\rVert_{H^\sigma(\Sph^2)}
&\le
C_{\beta,\tau,\sigma}
h_{X_{u,\beta}}^{\tau-\sigma}
\lVert f\rVert_{H^\tau(\Sph^2)}
\\
&\le
C_{\beta,\tau,\sigma}
L_\beta^{\tau-\sigma}
h_X^{\tau-\sigma}
\lVert f\rVert_{H^\tau(\Sph^2)}.
\end{aligned}
\end{equation}
\end{corollary}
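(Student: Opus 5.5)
The plan is to transport the isotropic estimate \eqref{eq:isotropic-error-estimate} through the deformation, using Proposition~\ref{prop:interpolation-transport} for the interpolants and Proposition~\ref{prop:sobolev-stability-diffeomorphism} for the norms. Fix \(f\in H^\tau(\Sph^2)\) and set \(g:=f\circ T_{u,\beta}^{-1}\). Since \(T_{u,\beta}^{-1}=T_{u,\beta^{-1}}\) is a \(C^\infty\)-diffeomorphism, Proposition~\ref{prop:sobolev-stability-diffeomorphism} gives \(g\in H^\tau(\Sph^2)\). By \eqref{eq:interpolation-transport}, \(f=g\circ T_{u,\beta}\), and linearity of composition,
\[
f-I_X^{u,\beta}f=\bigl(g-I_{X_{u,\beta}}^{\iso}g\bigr)\circ T_{u,\beta}.
\]

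Next we bound the \(H^\sigma\)-norm. Proposition~\ref{prop:sobolev-stability-diffeomorphism} with \(\Phi=T_{u,\beta}\) and order \(\sigma\) yields a constant \(C_1=C_1(\beta,\sigma)\) with \(\|h\circ T_{u,\beta}\|_{H^\sigma}\le C_1\|h\|_{H^\sigma}\). We apply this to \(h=g-I^{\iso}_{X_{u,\beta}}g\) and then invoke \eqref{eq:isotropic-error-estimate} with \(Z=X_{u,\beta}\). This set consists of pairwise distinct points because \(T_{u,\beta}\) is injective. We get
\[
\|f-I_X^{u,\beta}f\|_{H^\sigma}\le C_1C_{\iso,\tau,\sigma}h_{X_{u,\beta}}^{\tau-\sigma}\|g\|_{H^\tau}.
\]
Applying Proposition~\ref{prop:sobolev-stability-diffeomorphism} once more, now to \(T_{u,\beta}^{-1}\) at order \(\tau\), gives \(\|g\|_{H^\tau}\le C_2\|f\|_{H^\tau}\). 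Setting \(C_{\beta,\tau,\sigma}:=C_1C_2C_{\iso,\tau,\sigma}\) proves the first inequality in \eqref{eq:deformed-error-estimate}. The second inequality follows from the fill-distance bound \eqref{eq:fill-distance-deformation}, since \(\tau-\sigma\ge0\) makes \(h\mapsto h^{\tau-\sigma}\) nondecreasing.

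The main point requiring care is that the constant must be independent of \(u\). I would handle it through the conjugacy \eqref{eq:conjugacy-S2}: \(T_{u,\beta}=Q_u\circ T_\beta\circ Q_u^\top\) and \(T_{u,\beta}^{-1}=Q_u\circ T_{\beta^{-1}}\circ Q_u^\top\). Composition with a rotation is an isometry on every \(H^s(\Sph^2)\), because rotations preserve each harmonic eigenspace and the spectral norm \eqref{eq:sobolev-S2} depends only on the degree-wise \(L^2\) mass. Therefore \(C_1\) and \(C_2\) may be taken as the stability constants of the fixed maps \(T_\beta\) and \(T_{\beta^{-1}}\), which depend only on \(\beta,\sigma,\tau\). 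The constant \(C_{\iso,\tau,\sigma}\) does not depend on \(Z\), so it is uniform as well.

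There is one minor technical check. The estimate of \cite{NarcowichSunWardWendland2007} is usually stated under a smallness condition \(h_Z\le h_0\). If that condition is needed, the case \(h_{X_{u,\beta}}>h_0\) is treated separately using the stability bound \(\|g-I^{\iso}_Zg\|_{H^\sigma}\le C\|g\|_{H^\tau}\), which follows from the native space norm minimality of the interpolant together with Proposition~\ref{prop:isotropic-native-space-spectral}. One then enlarges the constant by a factor \(h_0^{\sigma-\tau}\). As stated, however, \eqref{eq:isotropic-error-estimate} holds for every \(Z\), so this case split is not required.
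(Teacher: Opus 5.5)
Your proposal is correct and follows the paper's proof: you transport the error through Proposition~\ref{prop:interpolation-transport}, then apply Proposition~\ref{prop:sobolev-stability-diffeomorphism} to \(T_{u,\beta}\) at order \(\sigma\) and to its inverse at order \(\tau\) on either side of the isotropic estimate \eqref{eq:isotropic-error-estimate}, and you finish with the fill-distance bound \eqref{eq:fill-distance-deformation}. Your conjugacy argument for the \(u\)-independence of the constants and your remark on a possible smallness condition \(h_Z\le h_0\) only spell out points that the paper either states briefly or absorbs into \eqref{eq:isotropic-error-estimate}.
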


\begin{proof}
Set \(T=T_{u,\beta}\) and \(g=f\circ T^{-1}\). By
Proposition~\ref{prop:interpolation-transport},
\[
f-I_X^{u,\beta}f
=
\bigl(g-I_{X_{u,\beta}}^{\iso}g\bigr)\circ T.
\]
Proposition~\ref{prop:sobolev-stability-diffeomorphism}, applied first
to \(T\) and then to \(T^{-1}\), together with
\eqref{eq:isotropic-error-estimate}, gives
\[
\begin{aligned}
\bigl\lVert f-I_X^{u,\beta}f\bigr\rVert_{H^\sigma(\Sph^2)}
&\le
C_{\beta,\sigma}
\bigl\lVert
g-I_{X_{u,\beta}}^{\iso}g
\bigr\rVert_{H^\sigma(\Sph^2)}
\\
&\le
C_{\beta,\sigma}C_{\iso,\tau,\sigma}
h_{X_{u,\beta}}^{\tau-\sigma}
\lVert g\rVert_{H^\tau(\Sph^2)}
\\
&\le
C_{\beta,\tau,\sigma}
h_{X_{u,\beta}}^{\tau-\sigma}
\lVert f\rVert_{H^\tau(\Sph^2)}.
\end{aligned}
\]
The constants can be chosen independently of \(u\), since rotations act
isometrically on every \(H^s(\Sph^2)\). The second inequality in
\eqref{eq:deformed-error-estimate} follows from
\eqref{eq:fill-distance-deformation}.
\end{proof}

These estimates relate the original and transformed fill distances
and give the same convergence order as in the isotropic case.
They do not, however, guarantee a smaller interpolation error. To demonstrate in which situation a transformed kernel can outperform the isotropic one, we have to adopt a stochastic point of view and will do so in the following numerical experiments. 
}

%% ============================================================
%% Section: Numerical experiments
%% ============================================================

\section{Numerical experiments}
\label{sec:numerics}

{
We now study how the deformation changes correlations on the sphere when the kernel is used as a spherical covariance function. We also examine whether the deformed model improves predictions compared to an
isotropic model, in case the assumed covariance is not isotropic. Further, we examine how the models perform when the
fitted radial family is misspecified or the data are generated
using a different axial deformation.
}

%% ------------------------------------------------------------
%% Subsection: Simulation setup
%% ------------------------------------------------------------
\subsection{Simulation setup}

{
For each scenario, we run \(100\) independent repetitions unless
stated otherwise. Each time, we draw \(180\) locations uniformly
on the unit sphere \(\mathbb S^2\). We use \(120\) locations for
fitting and \(60\) for testing. The models use the same training
data, and we compare their predictions with the same test values.
We do not, however, optimize the sampling locations for the deformation.

The exceptions are Figures~\ref{fig:numerics-geometry}
and~\ref{fig:mid-spatial}, which are not based on these \(100\)
repetitions and are included mainly for illustration.
}

{
For a radial family \(F\), let \(C_F(r;\rho_F)\) be its correlation
function, where \(r\) is the chordal distance and \(\rho_F\) is the
range or scale parameter. We normalize it such that
\[
C_F(0;\rho_F)=1.
\]
With variance \(\sigma^2\), the isotropic covariance is
\[
K_F(s,t)
=
\sigma^2 C_F(\lVert s-t\rVert;\rho_F).
\]
For our axial model, we use
\[
A_{u,\beta}=I+(\beta-1)uu^\top,
\qquad
T_{u,\beta}(s)
=
\frac{A_{u,\beta}s}{\lVert A_{u,\beta}s\rVert}
\]
and define the covariance by
\[
K_{u,\beta,F}(s,t)
=
\sigma^2
C_F\!\left(
\lVert T_{u,\beta}(s)-T_{u,\beta}(t)\rVert;\rho_F
\right).
\]
For \(\beta=1\), the map is the identity.
}

{
For each repetition, let \(K_0\) be the covariance kernel used to
generate the data. We set \(\sigma_0^2=1\) and use no nugget, so
\(K_0(s,s)=1\). Conditional on the sampled locations
\(s_1,\ldots,s_N\), we use the Gaussian model
\[
\begin{aligned}
\mathbf Z
&:=
\bigl(Z(s_1),\ldots,Z(s_N)\bigr)^\top,
\\
\mathbf Z\mid(s_1,\ldots,s_N)
&\sim
\mathcal N_N(0,\Sigma_0),
\qquad
(\Sigma_0)_{ij}=K_0(s_i,s_j).
\end{aligned}
\]
Each field value \(Z(s_i)\) therefore has marginal distribution
\(\mathcal N(0,1)\). Training and test values are generated jointly.
}

{
All fitted models have zero mean and no nugget. We estimate the
variance \(\sigma^2\) and the radial scale parameter in every model.
For our axial model, we also estimate the deformation strength
\(\beta\) and the unoriented axis \(u\). All free parameters are
estimated by Gaussian maximum likelihood.

We generate the data with \(u_0=e_3\), but treat the axis as unknown
in every axial fit. The choice \(u_0=e_3\) only fixes the coordinates,
since both the isotropic base covariance and the uniform sampling
design are rotationally invariant; see
Subsection~\ref{subsec:arbitrary-axis}.
}

{
For realization \(r\), we define the relative RMSE gain of a
fitted model \(M\) over the isotropic fit by
\[
G_{M,r}
=
100\,
\frac{\operatorname{RMSE}_{\mathrm{iso},r}
-\operatorname{RMSE}_{M,r}}
{\operatorname{RMSE}_{\mathrm{iso},r}}.
\]
Positive values mean that model \(M\) has a smaller RMSE than
the isotropic model. The win rate is the fraction of the
\(100\) repetitions with \(G_{M,r}>0\).
}

%% ------------------------------------------------------------
%% Subsection: Correlation geometry and radial families
%% ------------------------------------------------------------
\subsection{Correlation geometry and radial families}

{
In Figure~\ref{fig:numerics-geometry}, we first look at how the
deformation changes correlations on the sphere.
}

\begin{figure}[!htbp]
\centering
\includegraphics[height=0.35\textheight]{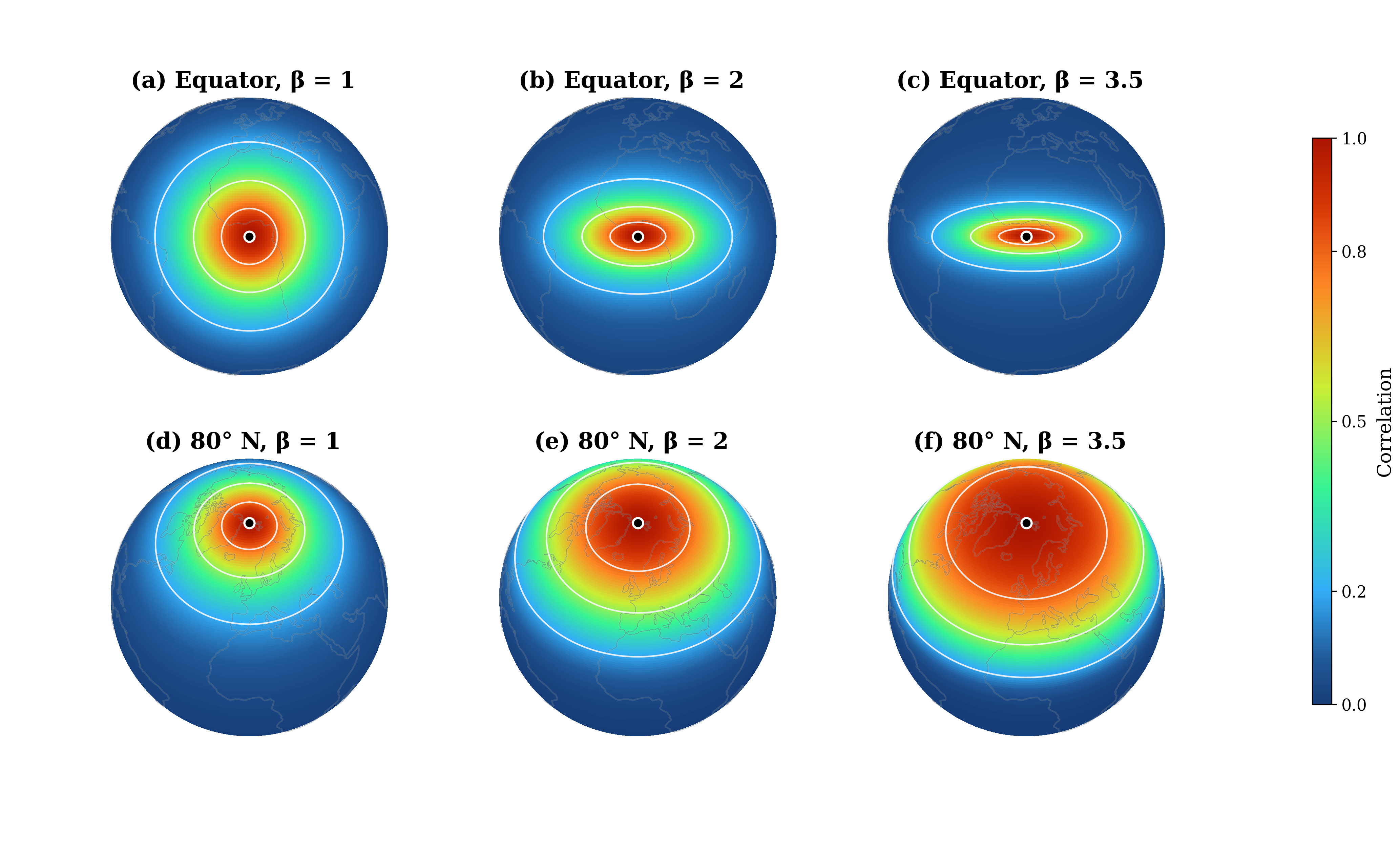}
\caption{Axial Mat\'ern correlation for source points
on the equator and at \(80^\circ\) N. Columns correspond to
\(\beta=1\), \(2\), and \(3.5\). The white curves show the correlation
levels \(0.2\), \(0.5\), and \(0.8\).}
\label{fig:numerics-geometry}
\end{figure}

{
For larger \(\beta\), the contours at the equator become narrower
from north to south and thus more elongated from east to west.
Near the pole, the region of high correlation grows. We get both
effects by changing \(\beta\) while keeping the axis fixed.
In contrast, an isotropic model cannot produce these differences
by changing its range.
}

{
We use three radial correlation families:
\[
\begin{array}{r@{\;:\qquad}l@{\;}c@{\;}l}
\text{Mat\'ern}
&
C_M(r;\rho)
&=&
\left(1+\dfrac{\sqrt3\,r}{\rho}\right)
\exp\left(-\dfrac{\sqrt3\,r}{\rho}\right),
\\[2mm]
\text{Gaussian}
&
C_G(r;\rho)
&=&
\exp\left(-\dfrac12\left(\dfrac r\rho\right)^2\right),
\\[2mm]
\text{Wendland}
&
C_W(r;R)
&=&
(1-r/R)_+^4(1+4r/R).
\end{array}
\]
The Mat\'ern smoothness is fixed at \(\nu=3/2\), and \(R\) is the
support radius of the Wendland function.

We choose the baseline values
\[
\rho_M=0.4,
\qquad
\rho_G=0.4,
\qquad
R_W=1.5
\]
so that the three isotropic models have approximately the same
mean correlation on \(\mathbb S^2\).
}

%% ------------------------------------------------------------
%% Subsection: Effect of deformation strength
%% ------------------------------------------------------------
\subsection{Effect of deformation strength}
\label{subsec:axial-strength}

{
We now generate data from our axial Mat\'ern model with
\[
\beta_0\in\{0.5,1,1.5,2.5,3.5\},
\qquad
\rho_0=0.4.
\]
In Figure~\ref{fig:axial-strength}, we compare an isotropic
Mat\'ern fit with our axial Mat\'ern fit. The axial fit uses
the same covariance family as the generating model, but the
axis and deformation strength must still be estimated from
the \(120\) training observations.
}

\begin{figure}[!htbp]
\centering
\includegraphics[height=0.35\textheight]{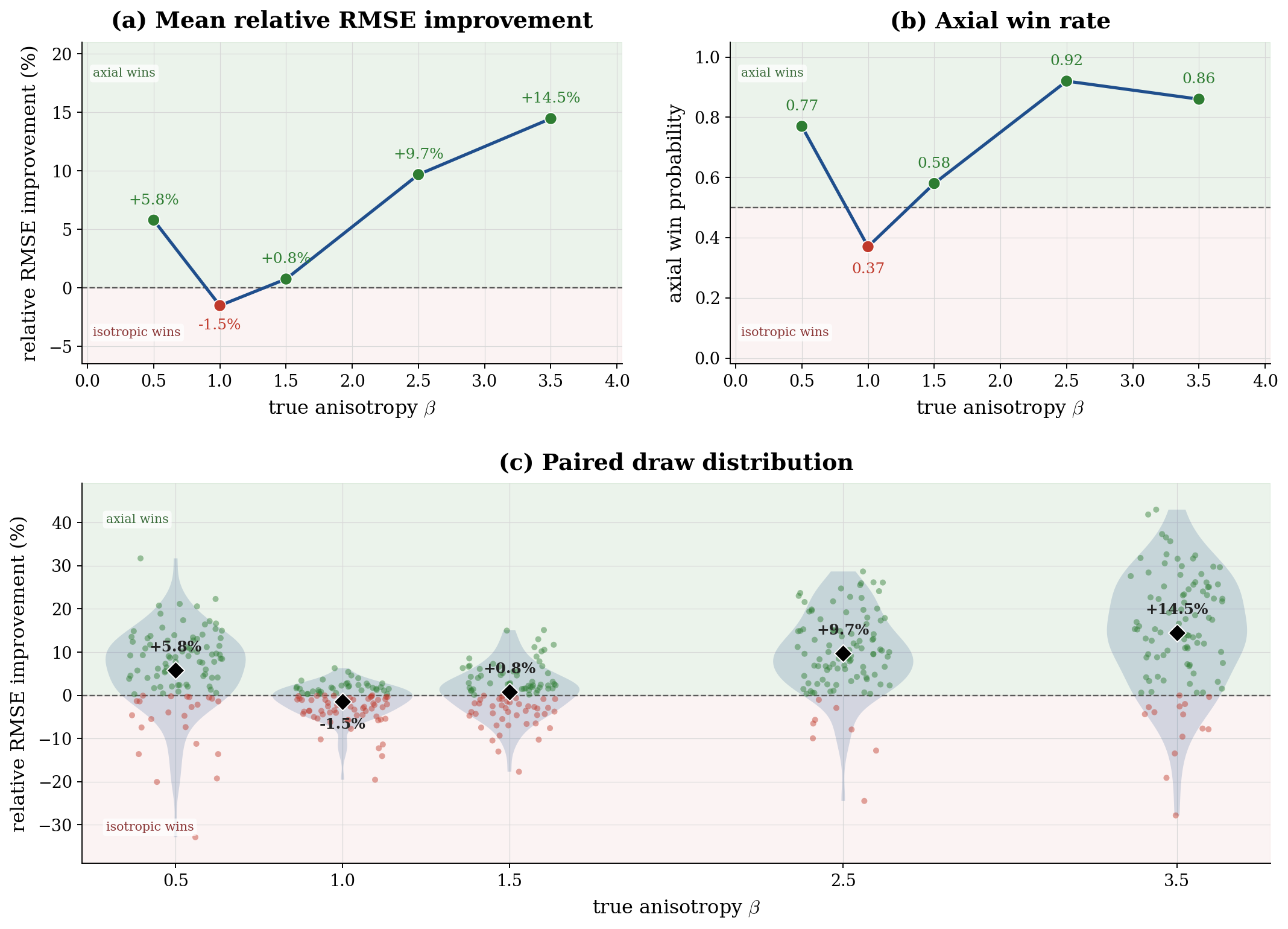}
\caption{Prediction under the proposed axial Mat\'ern deformation:
mean paired RMSE gain, win rate, and gain distributions across
deformation strengths.}
\label{fig:axial-strength}
\end{figure}

{
At \(\beta_0=1\), the generating covariance is isotropic and the
mean gain is \(-1.5\%\). The additional axial parameters do not
help on average in this case. For every tested non-isotropic
value of \(\beta_0\), the mean gain is positive. It is small at
\(\beta_0=1.5\), but reaches \(9.7\%\) and \(14.5\%\) at
\(\beta_0=2.5\) and \(3.5\), respectively. The axial model
performs worse than the isotropic model in some realizations.
On average, however, we observe larger gains as the deformation
becomes stronger for values \(\beta_0>1\).
}

%% ------------------------------------------------------------
%% Subsection: Prediction at matched mean correlation
%% ------------------------------------------------------------
\subsection{Prediction at matched mean correlation}
\label{subsec:matched-correlation}

{
Changing \(\beta\) changes both the shape of the covariance and
the mean correlation. In this experiment, we keep the mean
correlation fixed by adjusting the range.

For independent uniformly distributed points
\(S_1,S_2\in\mathbb S^2\), we define the mean pairwise correlation by
\[
M(\beta,\rho)
=
\mathbb E\!\left[
C_M\!\left(
\lVert T_{e_3,\beta}(S_1)-T_{e_3,\beta}(S_2)\rVert;\rho
\right)
\right].
\]
For each \(\beta_0\), we choose the generating range
\(\rho_{\beta_0}\) to match the mean correlation of the isotropic
model with range \(0.4\):
\[
M(\beta_0,\rho_{\beta_0})
=
M(1,0.4)
\approx0.0795.
\]
We still estimate the range freely in both fitted models.
}

\begin{figure}[!htbp]
\centering
\includegraphics[height=0.35\textheight]{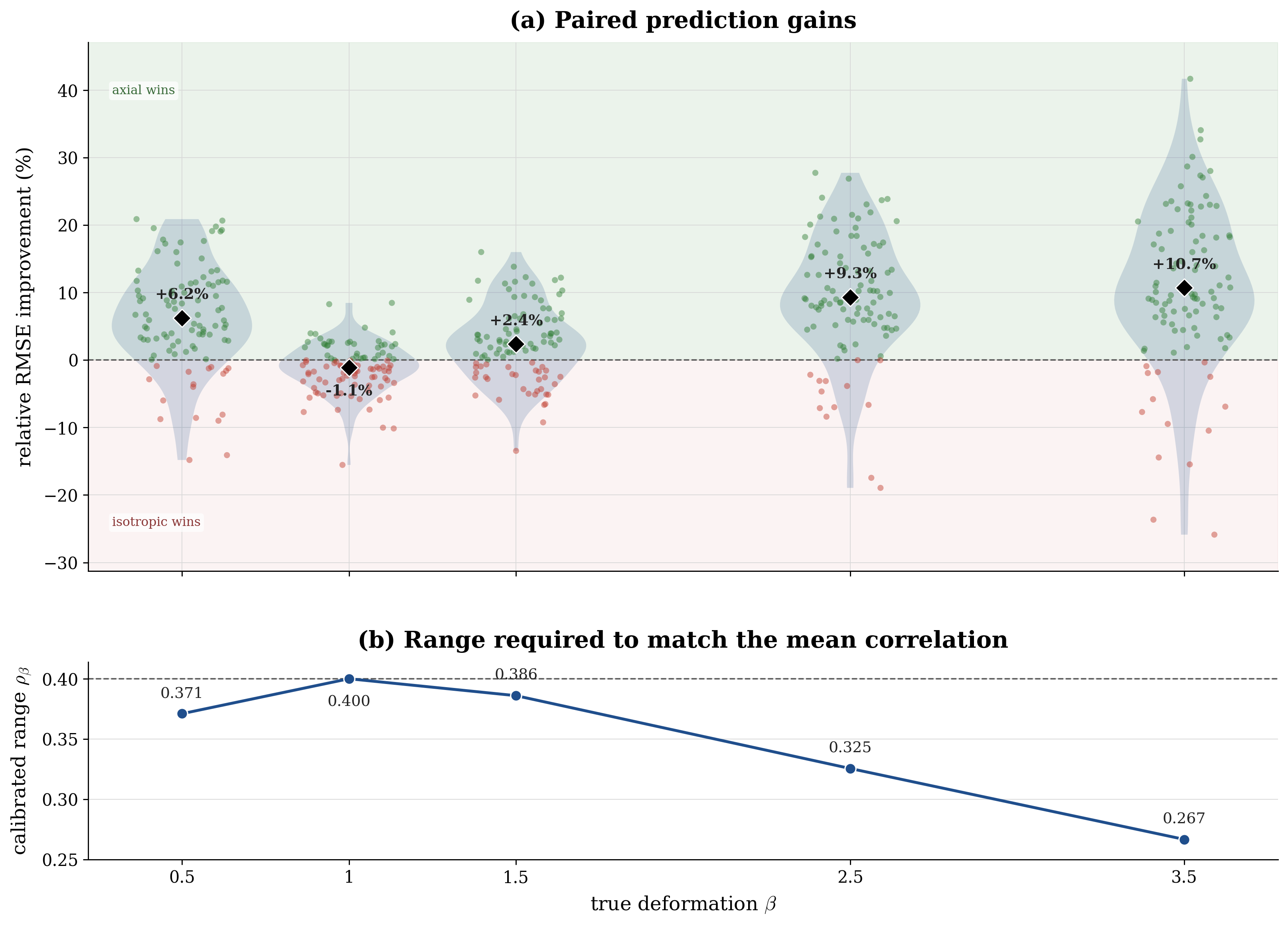}
\caption{Prediction after matching the mean pairwise correlation.
Panel~(a) shows the paired RMSE gain distributions for each
\(\beta_0\), with diamonds marking the means; panel~(b) shows the
corresponding calibrated generating ranges.}
\label{fig:matched-correlation}
\end{figure}

{
Figure~\ref{fig:matched-correlation} shows a mean gain of
\(-1.1\%\) at \(\beta_0=1\). For \(\beta_0=0.5\), \(2.5\),
and \(3.5\), the mean gains are \(6.2\%\), \(9.3\%\), and
\(10.7\%\), respectively. As in the previous experiment, the
axial model improves prediction on average for larger values of \(\beta_0\), even though the mean pairwise correlation is now held fixed.
}

%% ------------------------------------------------------------
%% Subsection: Cross-family comparison
%% ------------------------------------------------------------
\subsection{Cross-family comparison}
\label{subsec:cross-family}

{
We next test whether the prediction gain depends on the radial
family. We generate data with \(\beta_0=2.5\), using Mat\'ern,
Gaussian, or Wendland correlations, and fit all three radial
families in each case. For each combination in
Figure~\ref{fig:cross-family}, we compare an isotropic and an
axial fit from the same radial family.
}

\begin{figure}[!htbp]
\centering
\includegraphics[width=\textwidth]{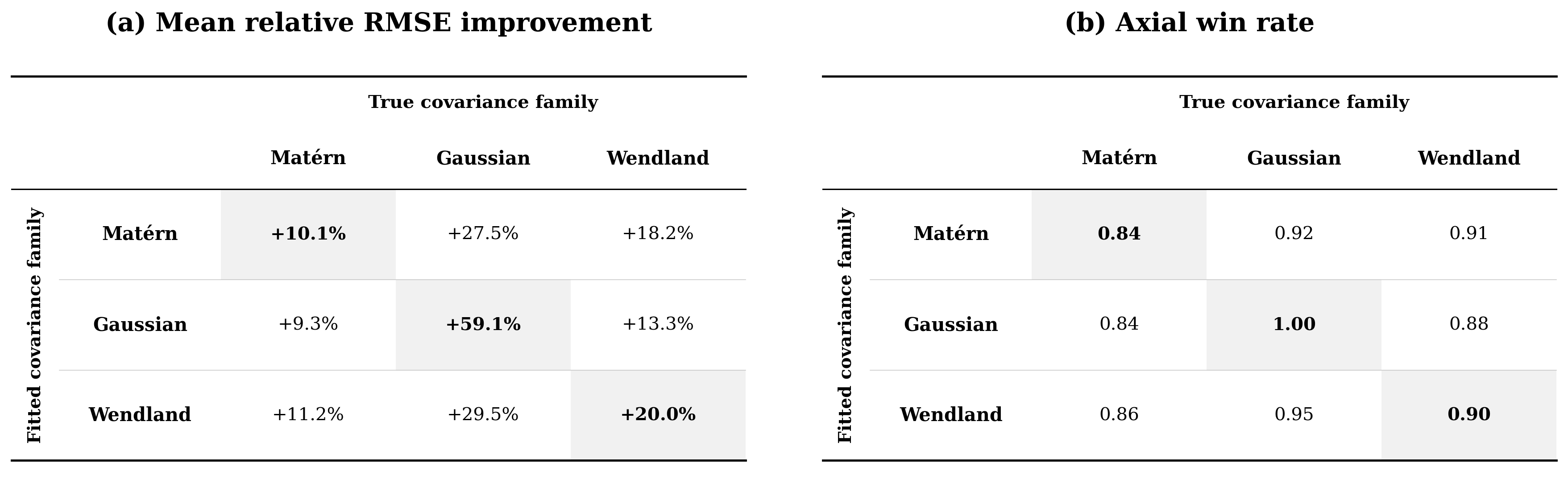}
\caption{Cross-family comparison under axial truth with \(\beta_0=2.5\).
The fitted radial families are listed on the left, and the generating
families across the top. Panel~(a) shows the mean relative RMSE
improvement, and panel~(b) the corresponding axial win rate.}
\label{fig:cross-family}
\end{figure}

{
All nine comparisons give a positive mean gain. The mean gains
range from \(9.3\%\) to \(59.1\%\), and the win rates range from
\(0.84\) to \(1.00\). The largest relative gains occur when the
data are generated with a Gaussian radial family, especially
when this family is also used for fitting. One possible
explanation is that these very smooth fields can be predicted
more accurately when the fitted model also captures the deformed
geometry.

The axial model also improves prediction on average when we
fit a different radial family from the one used to generate
the data.
}

%% ------------------------------------------------------------
%% Subsection: Robustness to alternative axial geometries
%% ------------------------------------------------------------
\subsection{Robustness to alternative axial geometries}
\label{subsec:alternative-warps}

{
We compare \(T_\beta\) with a midlatitude deformation
\(G_{\mathrm{mid}}\) and a flexible deformation \(G_{\mathrm{flex}}\).
Both take the form
\[
G\bigl(\omega(\theta,\lambda)\bigr)
=
\omega\bigl(g(\theta),\lambda\bigr),
\]
with
\[
\begin{aligned}
g_{\mathrm{mid}}(\theta;a)
&=
\theta-a\sin(2\theta)\bigl(1+\sin^2(2\theta)\bigr),
\\
g_{\mathrm{flex}}(\theta;c)
&=
\pi\,
\frac{\displaystyle\int_0^\theta e^{q_c(r)}\,\mathrm{d}r}
     {\displaystyle\int_0^\pi e^{q_c(r)}\,\mathrm{d}r},
\end{aligned}
\]
where \(q_c(\theta)=\sum_{j=1}^{3}c_j\cos(2j\theta)\).
The flexible family allows alternating bands of stretching
and compression.

For the parameter ranges used here, both maps are smooth and
strictly increasing, and satisfy
\[
g(0)=0,
\qquad
g(\pi)=\pi,
\qquad
g(\pi-\theta)=\pi-g(\theta).
\]
{
This means that the poles remain fixed and both
maps are symmetric about the equator. Both deformations also leave the longitude \(\lambda\)
unchanged, so the resulting covariance kernels are axially symmetric.

We use a Mat\'ern kernel and take \(T_\beta\)
with \(\beta=2\) as the reference generating deformation. The other two generating maps are
calibrated to the same root-mean-square angular displacement
over uniformly distributed points on the sphere. We also choose
the range of each generating model to give the mean pairwise
correlation \(M(1,0.4)\), as in
Subsection~\ref{subsec:matched-correlation}.
We fit all three axial families to data from each generating
geometry.
}

\begin{figure}[!htbp]
\centering
\includegraphics[width=0.72\textwidth]
{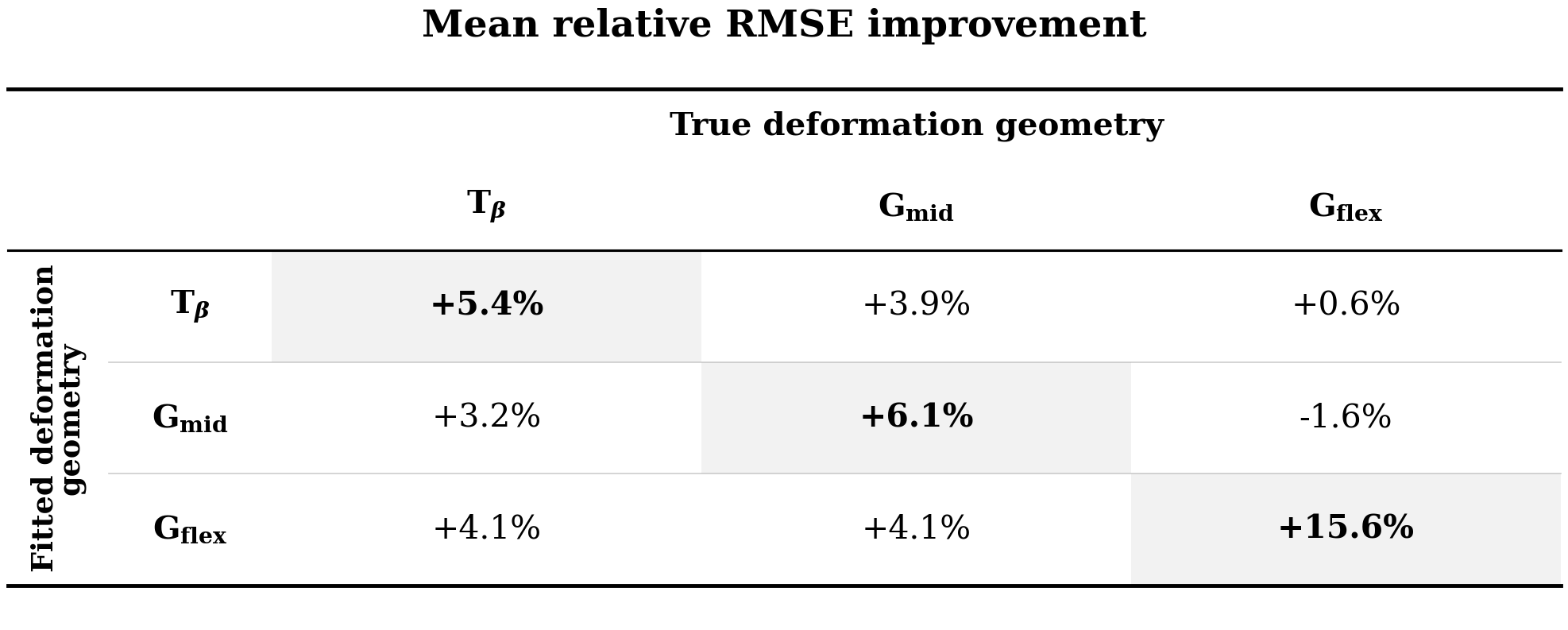}
\caption{Comparison of three axial deformation families. Each row corresponds
to a fitted deformation family, and each column to the family used
to generate the data. Entries show the mean relative RMSE improvement
over an isotropic model fitted to the same data.}
\label{fig:cross-deformation}
\end{figure}

{
In Figure~\ref{fig:cross-deformation}, the fit using the generating
deformation family gives the largest mean gain in each case.
Our \(T_\beta\) model also improves prediction on average for data
generated with \(G_{\mathrm{mid}}\), with a mean gain of \(3.9\%\).
It can therefore be useful even when the generating deformation
differs from the fitted family.

The banded example shows a limit of our model. Its mean
gain is only \(0.6\%\), compared with \(15.6\%\) for the flexible
fit. \(T\) is therefore not the best choice for every axial
covariance structure. However, the flexible family does not
perform best in every case either.

For illustration, Figure~\ref{fig:mid-spatial} shows a field
generated with $G_{\mathrm{mid}}$. To generate this field, we first
simulate an isotropic field using a spherical-harmonic expansion
truncated at degree $70$. We then evaluate the same realization
at the sample points and the points of a $90\times180$ grid
after applying $G_{\mathrm{mid}}$ to both sets of points.
Our $T_\beta$ model does not improve prediction everywhere.
Overall, however, its area-weighted grid RMSE is lower than
that of the isotropic model.
}

\begin{figure}[!htbp]
\centering
\includegraphics[width=\textwidth]
{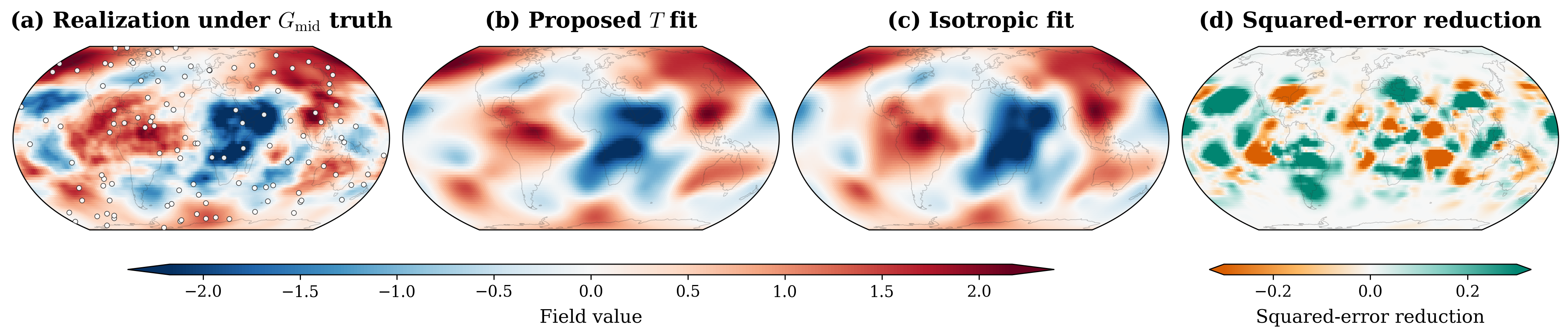}
\caption{From a separate set of ten realizations,
we select the one whose relative gain in area-weighted grid RMSE
is closest to the average gain. White circles mark the training
locations. Panel~(d) shows the difference in squared errors;
teal favours the proposed model and orange the isotropic model.}
\label{fig:mid-spatial}
\end{figure}
} %%============================================================
%% Section: Conclusion and outlook
%% ============================================================

\section{Conclusion and outlook}
\label{sec:summary}

{
Our construction uses a linear scaling along an axis, following
the idea in \cite{BeatsonDavydovLevesley2010}, but then normalizes
the result back to the sphere. Applying an isotropic kernel to
the transformed points gives an axially symmetric kernel.
We express the spherical-harmonic coefficients of this kernel
in terms of the original isotropic coefficients and give a finite
formula for the coupling coefficients. For each fixed input mode,
we prove that these coupling coefficients decay geometrically.
Furthermore, the Sobolev order of the native space and the
interpolation convergence order remain the same as in the
isotropic case under suitable spectral assumptions.
On average, our model predicts better than the isotropic model
in several of the tested axial cases. We also see this when
we fix the mean correlation or fit a different radial family
from the one used to generate the data. However, the banded
example shows that a single global deformation does not give
the best predictions for every axial covariance structure.
Our model is an intermediate choice between isotropy and more
flexible covariance families. There are several future directions to follow from our discussion. 
Future work could focus on estimation with larger datasets
and applications to global data, to improve predictions where currently isotropic kernels are used. We could also study how
accurately the axis and deformation strength can be estimated
from the observations.

A second important continuation would be to employ our results to derive explicit bounds on the $L_2$-error resulting from using a finite series expansion of the kernel for simulation, in the spirit of \cite{alegria2020karhunen}. 

Another question is how to build
deformations with more parameters while keeping an explicit
harmonic representation. 

We note that the results can also be viewed in a second way by focusing on the transformed point set introduced in section \ref{subsec:interpolation}. This way, the transformation can be used to distribute the points more favorably on the sphere as a variant of the fake nodes approach.
}

%% ============================================================
%% Section: Acknowledgements
%% ============================================================
\section*{Acknowledgements}

The geometric viewpoint developed in this paper was inspired in part by the
work of the late Professor Rick Beatson. In particular, the study by
Beatson, Davydov, and Levesley on anisotropically transformed radial basis
functions \cite{BeatsonDavydovLevesley2010} motivated the idea of generating
anisotropy by transforming the underlying geometry. Although their setting
differs from the normalized spherical deformation studied here, this general
perspective shaped the approach taken in this paper. We also
acknowledge Professor Beatson's lasting contributions to approximation
theory, radial basis functions, and kernel methods.

%% ============================================================
%% Section: Declarations
%% ============================================================
\section*{Declarations}

%% ------------------------------------------------------------
%% Subsection: Funding
%% ------------------------------------------------------------
\subsection*{Funding}
The work of Fabian Stallbauer and Janin J{\"a}ger has been supported
by the Deutsche Forschungsgemeinschaft (DFG, German Research Foundation)
under project number 461449252,
\emph{Kernel interpolation on Riemannian manifolds}.

%% ------------------------------------------------------------
%% Subsection: Author contributions
%% ------------------------------------------------------------
\subsection*{Author contributions}
Janin J{\"a}ger, as supervisor, contributed to the development of the
main ideas, proposed theorems and experiments, and contributed to the
structure and organization of the paper. Fabian Stallbauer developed
the proofs and conducted the numerical experiments. Both authors
discussed the results and contributed to the revision and finalization
of the manuscript.

%% ------------------------------------------------------------
%% Subsection: Competing interests
%% ------------------------------------------------------------
\subsection*{Competing interests}
The authors have no competing interests to declare.

%% ------------------------------------------------------------
%% Subsection: Data and code availability
%% ------------------------------------------------------------
\subsection*{Data and code availability}
All data used in the numerical experiments are simulated.
The code, numerical results, and scripts for reproducing the figures
are available at
\url{https://github.com/fabianstallbauer/axial-spherical-kernels}.

%% ============================================================
%% Bibliography
%% ============================================================
\bibliographystyle{plain}
\bibliography{references_ordered}

%% ============================================================
%% Appendices
%% ============================================================
\appendix

%%===========================================================
%% Section: Derivation of the explicit connection-coefficient formula
%% ============================================================
\section{Derivation of the explicit connection-coefficient formula}
\label{app:explicit-S2}

This appendix proves Theorem~\ref{thm:explicit-S2}. The proof proceeds in three steps: the finite associated-Legendre expansion, the parity rule, and the evaluation of the remaining scalar integrals by Euler's representation \eqref{eq:Euler-hypergeometric-integral}.

\begin{proof}[Proof of Theorem~\ref{thm:explicit-S2}]
Let \(a\ge\mu\ge0\) be integers. Substituting
\[
 q=a-\mu,
 \qquad
 \nu=\mu+\frac12
\]
into the finite Gegenbauer expansion
\eqref{eq:Gegenbauer-finite-expansion}, and then using
\eqref{eq:associated-Legendre-Gegenbauer}, gives
\[
 \begin{aligned}
 P_a^\mu(x)
 &=
 (-1)^\mu\frac{(2\mu)!}{2^\mu\mu!}
 (1-x^2)^{\mu/2}
 \\
 &\quad\times
 \sum_{k=0}^{\lfloor(a-\mu)/2\rfloor}
 (-1)^k
 \frac{\left(\mu+\frac12\right)_{a-\mu-k}}
 {k!(a-\mu-2k)!}
 (2x)^{a-\mu-2k}.
 \end{aligned}
\]
The identity
\[
 \left(\mu+\frac12\right)_{a-\mu-k}
 =
 \frac{(2a-2k)!\,\mu!}
 {4^{a-\mu-k}(a-k)!(2\mu)!}
\]
reduces the coefficient of \(x^{a-\mu-2k}\) to
\[
 \frac{(-1)^{\mu+k}(2a-2k)!}
 {2^a k!(a-k)!(a-\mu-2k)!}.
\]
Using the notation introduced in \eqref{eq:gamma-def-main}, we obtain
\begin{equation}
\label{eq:app-associated-Legendre-finite}
 P_a^\mu(x)
 =
 (1-x^2)^{\mu/2}
 \sum_{k=0}^{R_{a,\mu}}
 \gamma_{a,\mu,k}x^{a-\mu-2k}.
\end{equation}

Set \(\mu=|m|\). By Theorem~\ref{thm:no-mixing-S2},
\begin{equation}
\label{eq:app-B-one-dimensional}
 B_n^{(\ell,m)}(\beta)
 =
 2\pi\mathcal N_{\ell\mu}\mathcal N_{n\mu}
 \int_{-1}^{1}
 P_\ell^\mu\bigl(\widetilde x_\beta(x)\bigr)
 P_n^\mu(x)\,\dd x.
\end{equation}
The map \(\widetilde x_\beta\) is odd, and
\eqref{eq:app-associated-Legendre-finite} gives
\[
 P_a^\mu(-x)
 =
 (-1)^{a-\mu}P_a^\mu(x)
 =
 (-1)^{a+\mu}P_a^\mu(x).
\]
Consequently,
\[
 P_\ell^\mu\bigl(\widetilde x_\beta(-x)\bigr)P_n^\mu(-x)
 =
 (-1)^{\ell+n}
 P_\ell^\mu\bigl(\widetilde x_\beta(x)\bigr)P_n^\mu(x).
\]
The integrand in \eqref{eq:app-B-one-dimensional} is therefore odd when
\(\ell+n\) is odd. Hence
\begin{equation}
\label{eq:app-parity-rule}
 \ell+n\ \text{odd}
 \qquad\Longrightarrow\qquad
 B_n^{(\ell,m)}(\beta)=0.
\end{equation}

Suppose now that \(\ell+n\) is even. From the definition of
\(\widetilde x_\beta\),
\begin{equation}
\label{eq:app-pullback-identities}
 1-\widetilde x_\beta(x)^2
 =
 \frac{1-x^2}{1+(\beta^2-1)x^2},
 \qquad
 \widetilde x_\beta(x)^q
 =
 \beta^q x^q
 \bigl(1+(\beta^2-1)x^2\bigr)^{-q/2},
 \quad q\in\mathbb N_0.
\end{equation}
Applying \eqref{eq:app-associated-Legendre-finite} with \(a=\ell\) and
\(a=n\), respectively, yields
\begin{align*}
 P_\ell^\mu\bigl(\widetilde x_\beta(x)\bigr)
 &=
 (1-x^2)^{\mu/2}
 \sum_{k=0}^{R_{\ell,\mu}}
 \gamma_{\ell,\mu,k}
 \beta^{\ell-\mu-2k}
 x^{\ell-\mu-2k}
 \bigl(1+(\beta^2-1)x^2\bigr)^{-(\ell-2k)/2},
 \\
 P_n^\mu(x)
 &=
 (1-x^2)^{\mu/2}
 \sum_{j=0}^{R_{n,\mu}}
 \gamma_{n,\mu,j}x^{n-\mu-2j}.
\end{align*}
Substituting these expressions into
\eqref{eq:app-B-one-dimensional} gives
\begin{equation}
\label{eq:app-B-with-J}
 B_n^{(\ell,m)}(\beta)
 =
 2\pi\mathcal N_{\ell\mu}\mathcal N_{n\mu}
 \sum_{k=0}^{R_{\ell,\mu}}
 \sum_{j=0}^{R_{n,\mu}}
 \gamma_{\ell,\mu,k}\gamma_{n,\mu,j}
 \beta^{\ell-\mu-2k}
 J_{p_{k,j},s_k}^{(\mu)}(\beta),
\end{equation}
where
\[
 s_k=\frac{\ell-2k}{2},
 \qquad
 p_{k,j}
 =
 \frac{\ell+n-2\mu-2(k+j)}{2},
\]
and
\[
 J_{p,s}^{(\mu)}(\beta)
 :=
 \int_{-1}^{1}
 x^{2p}(1-x^2)^\mu
 \bigl(1+(\beta^2-1)x^2\bigr)^{-s}
 \,\dd x.
\]
The parity assumption gives \(p_{k,j}\in\mathbb Z\), while the summation
bounds imply
\[
 2p_{k,j}
 =
 \ell+n-2\mu-2(k+j)\ge0.
\]
Thus \(p_{k,j}\in\mathbb N_0\).

The integrand defining \(J_{p,s}^{(\mu)}(\beta)\) is even. With \(t=x^2\),
\[
 \begin{aligned}
 J_{p,s}^{(\mu)}(\beta)
 &=
 2\int_0^1
 x^{2p}(1-x^2)^\mu
 \bigl(1+(\beta^2-1)x^2\bigr)^{-s}
 \,\dd x
 \\
 &=
 \int_0^1
 t^{p-\frac12}(1-t)^\mu
 \bigl(1+(\beta^2-1)t\bigr)^{-s}
 \,\dd t.
 \end{aligned}
\]
To apply \eqref{eq:Euler-hypergeometric-integral}, set
\[
 a=s,
 \qquad
 b=p+\frac12,
 \qquad
 c=p+\mu+\frac32,
 \qquad
 z=1-\beta^2.
\]
Here
\[
 b>0,
 \qquad
 c-b=\mu+1>0,
 \qquad
 z<1.
\]
Moreover,
\[
 1-zt
 =
 1+(\beta^2-1)t>0,
 \qquad 0\le t\le1.
\]
Euler's formula therefore gives
\begin{equation}
\label{eq:app-J-hypergeometric}
 J_{p,s}^{(\mu)}(\beta)
 =
 \Bfun\left(p+\frac12,\mu+1\right)
 {}_2F_1\left(
 s,p+\frac12;
 p+\mu+\frac32;
 1-\beta^2
 \right).
\end{equation}
Substituting \eqref{eq:app-J-hypergeometric} into
\eqref{eq:app-B-with-J} gives \eqref{eq:explicit-B-S2}. Together with
\eqref{eq:app-parity-rule}, this proves the theorem.
\end{proof}

%% ============================================================
%% Section: Proof of geometric fixed-mode decay
%% ============================================================

\section{Proof of geometric fixed-mode decay}
\label{app:decay-S2}

This appendix proves Theorem~\ref{thm:fixed-mode-decay}. It combines Wang's coefficient estimate, stated in the Gegenbauer convention used here, with an associated-Legendre integral estimate and with the Bernstein ellipses on which the transformed profile \(H_{\ell,\mu,\beta}\) is holomorphic.

Throughout this appendix, the input mode \((\ell,m)\) is fixed and
\(\mu=|m|\). Constants may depend on \(\ell\), \(m\), \(\beta\), and the
chosen Bernstein ellipse, but not on the output degree \(n\).

%% ------------------------------------------------------------
%% Subsection: Gegenbauer coefficients
%% ------------------------------------------------------------
\subsection*{Gegenbauer coefficients}

For \(\nu>0\), define the Gegenbauer weight by
\begin{equation}
\label{eq:Gegenbauer-weight}
 w_\nu(x)
 =
 (1-x^2)^{\nu-\frac12}.
\end{equation}
The Gegenbauer polynomials satisfy
\begin{equation}
\label{eq:Gegenbauer-orthogonality}
 \int_{-1}^{1}
 C_q^{(\nu)}(x)
 C_r^{(\nu)}(x)
 w_\nu(x)\,\dd x
 =
 h_q^{(\nu)}\delta_{qr},
\end{equation}
where
\begin{equation}
\label{eq:Gegenbauer-norm}
 h_q^{(\nu)}
 =
 \frac{
 2^{1-2\nu}\pi\Gamma(q+2\nu)
 }{
 \Gamma(\nu)^2
 \Gamma(q+1)
 (q+\nu)
 }.
\end{equation}

For \(g\in L^2((-1,1),w_\nu(x)\,\dd x)\), define its \(q\)-th
Gegenbauer coefficient by
\begin{equation}
\label{eq:Gegenbauer-coefficient}
 \mathfrak a_q^{(\nu)}(g)
 :=
 \frac{1}{h_q^{(\nu)}}
 \int_{-1}^{1}
 g(x)C_q^{(\nu)}(x)w_\nu(x)\,\dd x,
 \qquad q\in\mathbb N_0.
\end{equation}

%% ------------------------------------------------------------
%% Subsection: Wang's estimate and its application
%% ------------------------------------------------------------
\subsection*{Wang's estimate and its application}

For \(\rho>1\), let \(\partial\mathcal E_\rho\) be the Bernstein ellipse
\begin{equation}
\label{eq:app-Bernstein-ellipse}
 \partial\mathcal E_\rho
 =
 \left\{
 \frac12\left(
 \rho e^{i\theta}+\rho^{-1}e^{-i\theta}
 \right):
 0\le\theta\le2\pi
 \right\},
\end{equation}
and let \(\mathcal E_\rho\) denote the compact region enclosed by this
curve. Equivalently, its boundary is parametrized by
\[
 z=\frac12(w+w^{-1}),
 \qquad |w|=\rho.
\]

For a function \(g\) holomorphic in a neighbourhood of
\(\mathcal E_\rho\), set
\[
 M_\rho(g)
 :=
 \max_{z\in\partial\mathcal E_\rho}|g(z)|.
\]

For \(q\ge1\), define
\begin{equation}
\label{eq:app-Wang-Upsilon}
 \Upsilon_q^{1,\nu}
 :=
 \exp\left(
 \frac{1-\nu}{2(q+\nu-1)}
 +
 \frac{1}{12q}
 \right),
\end{equation}
and
\begin{equation}
\label{eq:app-Wang-Xi}
 \Xi_{q,\rho,\nu}(g)
 :=
 \frac{2\Gamma(\nu)M_\rho(g)\Upsilon_q^{1,\nu}}{\pi}
 \left[
 (\rho+\rho^{-1})
 +
 \left(\frac{\pi}{2}-1\right)(\rho-\rho^{-1})
 \right].
\end{equation}

\begin{theorem}[Wang's estimate]
\label{thm:app-Wang-Gegenbauer}
Let \(\nu>0\), \(\rho>1\), and let \(g\) be holomorphic in a
neighbourhood of \(\mathcal E_\rho\). Then, for every \(q\ge1\),
\begin{equation}
\label{eq:app-Wang-full-estimate}
 \bigl|\mathfrak a_q^{(\nu)}(g)\bigr|
 \le
 \begin{cases}
 \Xi_{q,\rho,\nu}(g)
 (1-\rho^{-2})^{\nu-1}
 q^{1-\nu}\rho^{-(q+1)},
 & 0<\nu\le1,\\[0.8em]
 \Xi_{q,\rho,\nu}(g)
 (1+\rho^{-2})^{\nu-1}
 q^{1-\nu}\rho^{-(q+1)},
 & \nu>1.
 \end{cases}
\end{equation}
\end{theorem}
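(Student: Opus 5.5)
The plan is to reprove Wang's bound through a contour-integral representation of the Gegenbauer coefficient, with the Bernstein ellipse \(\partial\mathcal E_\rho\) as the contour. Since \(g\) is holomorphic in a neighbourhood of \(\mathcal E_\rho\), Cauchy's formula gives \(g(x)=\frac{1}{2\pi i}\oint_{\partial\mathcal E_\rho}\frac{g(z)}{z-x}\,\dd z\) for every \(x\in[-1,1]\). Inserting this into \eqref{eq:Gegenbauer-coefficient} and applying Fubini yields
\[
\mathfrak a_q^{(\nu)}(g)=\frac{1}{2\pi i}\oint_{\partial\mathcal E_\rho} g(z)\,\Phi_q^{(\nu)}(z)\,\dd z,
\qquad
\Phi_q^{(\nu)}(z):=\frac{1}{h_q^{(\nu)}}\int_{-1}^{1}\frac{C_q^{(\nu)}(x)w_\nu(x)}{z-x}\,\dd x .
\]
Here \(\Phi_q^{(\nu)}\) is, up to normalisation, the Gegenbauer function of the second kind.

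The first key step is an explicit series for \(\Phi_q^{(\nu)}\) in the Joukowski variable \(z=\frac12(w+w^{-1})\) with \(|w|=\rho\). I expect it to take the form
\[
\Phi_q^{(\nu)}(z)=c_{q,\nu}\,w^{-(q+1)}\,{}_2F_1\bigl(\nu,q+1;q+\nu+1;w^{-2}\bigr),
\]
with an explicit Gamma-ratio constant \(c_{q,\nu}\) that contains \(\Gamma(\nu)\,\Gamma(q+1)/\Gamma(q+\nu)\). To derive it, I would expand \(1/(z-x)\) through the generating function of the Chebyshev polynomials of the second kind, or equivalently use Rodrigues' formula and integrate by parts \(q\) times, and then evaluate the remaining moments as Beta integrals. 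Checking the normalisation against \eqref{eq:Gegenbauer-norm} is routine.

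Next come three separate estimates. For the Gamma ratio, a Stirling-type inequality gives \(\Gamma(q+1)/\Gamma(q+\nu)\le q^{1-\nu}\Upsilon_q^{1,\nu}\); the factor \(\Upsilon_q^{1,\nu}\) in \eqref{eq:app-Wang-Upsilon} has exactly the shape of such a ratio estimate. For the contour, the perimeter of the ellipse with semi-axes \(a=\frac12(\rho+\rho^{-1})\) and \(b=\frac12(\rho-\rho^{-1})\) is bounded by \(4a+(2\pi-4)b\). Dividing by \(2\pi\) produces the bracket in \eqref{eq:app-Wang-Xi}, and \(|g|\) on the contour is bounded by \(M_\rho(g)\). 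Finally, \(|w^{-(q+1)}|=\rho^{-(q+1)}\) supplies the geometric factor.

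The main obstacle is bounding the hypergeometric factor uniformly in \(q\) by \((1\mp\rho^{-2})^{\nu-1}\). I would first bound \(|{}_2F_1(\nu,q+1;q+\nu+1;u)|\) by the series with \(|u|=\rho^{-2}\), which is legitimate when all coefficients are positive. I would then use Euler's transformation \({}_2F_1(a,b;c;u)=(1-u)^{c-a-b}{}_2F_1(c-a,c-b;c;u)\), which here reads \((1-u)^{0}\,{}_2F_1(q+1,\nu;q+\nu+1;u)\) and therefore does not help directly. Instead I would apply Pfaff's transformation to extract a factor \((1-u)^{-\nu}\), or compare the coefficients \((\nu)_k(q+1)_k/((q+\nu+1)_k k!)\) with those of \((1-u)^{-(\nu-1)}\) and absorb the leftover factor into the constant. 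The sign of \(\nu-1\) determines the direction of the monotone comparison, which explains the case split. For \(\nu>1\) the coefficients alternate after a Pfaff transform, and this is where I would expect \((1+\rho^{-2})^{\nu-1}\) to appear. Combining the three estimates gives \eqref{eq:app-Wang-full-estimate}.
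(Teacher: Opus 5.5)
\paragraph{Assessment.} There is a genuine gap, and it sits in the hypergeometric factor, the one place where the case split comes from. The rest of your plan is sound and is the same route as Wang's Theorem~4.3, which the paper only cites rather than reproves: Cauchy's formula on $\partial\mathcal E_\rho$, the second-kind function $\Phi_q^{(\nu)}$ with prefactor $2\Gamma(\nu)\Gamma(q+1)/\Gamma(q+\nu)$, the Gamma-ratio bound, the perimeter bound $4a+(2\pi-4)b$, and $|w|^{-(q+1)}=\rho^{-(q+1)}$.

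\paragraph{The representation is wrong.} The formula you expect has the wrong first parameter. The correct one is
\[
\Phi_q^{(\nu)}(z)=\frac{2\Gamma(\nu)\Gamma(q+1)}{\Gamma(q+\nu)}\,w^{-(q+1)}\,{}_2F_1\bigl(1-\nu,q+1;q+\nu+1;w^{-2}\bigr),
\]
with $1-\nu$ in place of $\nu$. The two versions agree only at $\nu=\tfrac12$. You can check this at $\nu=1$. There $C_q^{(1)}=U_q$ and $h_q^{(1)}=\pi/2$. Also $\int_{-1}^{1}\sqrt{1-x^2}\,U_q(x)(z-x)^{-1}\dd x=\pi w^{-(q+1)}$ exactly, so the hypergeometric factor must equal $1$. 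Your ${}_2F_1(1,q+1;q+2;w^{-2})$ is not $1$.

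\paragraph{Why your last step stalls.} For your function the required bound is actually false. At $\nu=1$ and $w=\rho$ it exceeds $1=(1-\rho^{-2})^{0}$ for every $q$. For every $\nu>\tfrac12$ it tends, as $q\to\infty$, to $(1-\rho^{-2})^{-\nu}$, which exceeds both $(1-\rho^{-2})^{\nu-1}$ and $(1+\rho^{-2})^{\nu-1}$. None of your proposed remedies can rescue this: series majorisation, Pfaff, coefficient comparison, or absorbing the leftover ``into the constant''. The statement has completely explicit constants, so there is nowhere to absorb anything.

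\paragraph{How to close the gap.} With the correct parameter the step is immediate. Take $b=q+1$ and $c=q+\nu+1$, so $c-b=\nu>0$. Euler's integral representation, valid for complex $|u|<1$, gives
\[
{}_2F_1\bigl(1-\nu,q+1;q+\nu+1;u\bigr)=\frac{\Gamma(q+\nu+1)}{\Gamma(q+1)\Gamma(\nu)}\int_0^1 t^{q}(1-t)^{\nu-1}(1-ut)^{\nu-1}\dd t .
\]
For $|u|=\rho^{-2}$ and $0\le t\le1$ you have $1-\rho^{-2}\le|1-ut|\le1+\rho^{-2}$ and $|(1-ut)^{\nu-1}|=|1-ut|^{\nu-1}$. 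Bound this by $(1-\rho^{-2})^{\nu-1}$ when $\nu\le1$ and by $(1+\rho^{-2})^{\nu-1}$ when $\nu>1$. The remaining Beta integral is exactly the reciprocal of the prefactor. This yields precisely the two cases of \eqref{eq:app-Wang-full-estimate}.

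\paragraph{What to fix.} Derive $\Phi_q^{(\nu)}$ rather than guess it. You can do this through the Rodrigues computation you mention, or by matching the moments $\int_{-1}^1 x^kC_q^{(\nu)}(x)w_\nu(x)\dd x$ in the $1/z$-expansion. Also cite Wang's Lemma~4.2, or prove it, for the inequality $\Gamma(q+1)/\Gamma(q+\nu)\le\Upsilon_q^{1,\nu}q^{1-\nu}$ with this specific $\Upsilon_q^{1,\nu}$.
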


This is Wang's estimate
\cite[Theorem~4.3]{Wang2016Gegenbauer},
expressed in the coefficient convention fixed above. The factor
\(\Upsilon_q^{1,\nu}\) is obtained from
Wang~\cite[Lemma~4.2]{Wang2016Gegenbauer}
with \(a=1\) and \(b=\nu\).

In the application below, we fix the input mode \((\ell,m)\)
and the deformation parameter \(\beta\), so \(g\) and \(\nu\)
are fixed. We choose an admissible Bernstein ellipse and
simplify the estimate for these fixed choices.

For \(q\ge1\), we have
\[
 q+\nu-1\ge\nu,
 \qquad
 q^{-1}\le1.
\]
Therefore,
\[
 \Upsilon_q^{1,\nu}
 \le
 \exp\left(
 \frac{|1-\nu|}{2\nu}
 +
 \frac{1}{12}
 \right).
\]
Thus \(\Xi_{q,\rho,\nu}(g)\) is bounded independently of \(q\).
The factors involving \(1\pm\rho^{-2}\) are also fixed, and the
additional factor \(\rho^{-1}\) can be absorbed into the constant.
Therefore,
\begin{equation}
\label{eq:app-Wang-reduced}
 \bigl|\mathfrak a_q^{(\nu)}(g)\bigr|
 \le
 C_{g,\nu,\rho}
 q^{1-\nu}\rho^{-q},
 \qquad q\ge1,
\end{equation}
where \(C_{g,\nu,\rho}>0\) is independent of \(q\).

\begin{corollary}
\label{cor:app-associated-Legendre-projection}
Fix \(\mu\in\mathbb N_0\) and \(\rho>1\). If \(g\) is holomorphic in a
neighbourhood of \(\mathcal E_\rho\), then there exists
\(C_{g,\mu,\rho}>0\), independent of \(n\), such that
\begin{equation}
\label{eq:app-associated-Legendre-projection}
 \mathcal N_{n\mu}
 \left|
 \int_{-1}^{1}
 g(x)P_n^\mu(x)(1-x^2)^{\mu/2}\,\dd x
 \right|
 \le
 C_{g,\mu,\rho}\rho^{-n},
 \qquad n\ge\mu.
\end{equation}
\end{corollary}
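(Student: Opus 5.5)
We prove Corollary~\ref{cor:app-associated-Legendre-projection} by rewriting the integral as a Gegenbauer coefficient of \(g\) and then applying the reduced form \eqref{eq:app-Wang-reduced} of Theorem~\ref{thm:app-Wang-Gegenbauer} with \(\nu=\mu+\frac12\).

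The first step is to use \eqref{eq:associated-Legendre-Gegenbauer} to write \(P_n^\mu(x)=d_\mu(1-x^2)^{\mu/2}C_{n-\mu}^{(\mu+\frac12)}(x)\). Then
\[
 \int_{-1}^{1}g(x)P_n^\mu(x)(1-x^2)^{\mu/2}\,\dd x
 =
 d_\mu\int_{-1}^{1}g(x)C_{q}^{(\nu)}(x)w_\nu(x)\,\dd x
 =
 d_\mu h_q^{(\nu)}\mathfrak a_q^{(\nu)}(g),
\]
where \(q=n-\mu\) and \(\nu=\mu+\frac12\). The weight is correct, since \(w_\nu(x)=(1-x^2)^{\nu-\frac12}=(1-x^2)^\mu\). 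Because \(g\) is holomorphic near \(\mathcal E_\rho\supset[-1,1]\), it is bounded on \([-1,1]\) and therefore lies in the weighted \(L^2\) space. For \(q\ge1\), i.e.\ \(n\ge\mu+1\), the bound \eqref{eq:app-Wang-reduced} gives \(|\mathfrak a_q^{(\nu)}(g)|\le C_{g,\nu,\rho}q^{1-\nu}\rho^{-q}\). The single case \(n=\mu\) (\(q=0\)) is a fixed finite quantity and can be absorbed into the constant.

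The second step, which is the main bookkeeping, is to check that the prefactor \(\mathcal N_{n\mu}h_q^{(\nu)}q^{1-\nu}\) grows at most polynomially in \(n\). From \eqref{eq:Ylm-normalization}, \(\mathcal N_{n\mu}\asymp n^{1/2}(n-\mu)!^{1/2}/(n+\mu)!^{1/2}\asymp n^{1/2-\mu}\). From \eqref{eq:Gegenbauer-norm} with \(2\nu=2\mu+1\), \(h_q^{(\nu)}\asymp\Gamma(q+2\mu+1)/(\Gamma(q+1)q)\asymp q^{2\mu-1}\). Combining these with \(q^{1-\nu}=q^{1/2-\mu}\) gives a prefactor of order \(n^{1/2-\mu}\,n^{2\mu-1}\,n^{1/2-\mu}=O(1)\), using that \(n\asymp q\) for \(q\ge1\) with \(\mu\) fixed. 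Even if these exponents were miscounted, the growth would only be polynomial in \(n\).

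It then remains to convert \(\rho^{-q}=\rho^{\mu}\rho^{-n}\), absorbing \(\rho^\mu\) into the constant. The result is \(\mathcal N_{n\mu}|\int\cdots|\le C\rho^{-n}\). If the prefactor check had left a residual polynomial factor \(n^{k}\), we would remove it by applying the argument with a slightly larger \(\rho'\) for which \(g\) is still holomorphic (holomorphy on a neighbourhood of \(\mathcal E_\rho\) gives such a \(\rho'>\rho\)), using \(n^k\rho'^{-n}\le C\rho^{-n}\). I expect the only delicate point to be the Gamma-ratio asymptotics, and these can be made uniform by Stirling's formula or by the elementary bound \(\Gamma(q+a)/\Gamma(q+b)\le C q^{a-b}\) for fixed \(a,b\).
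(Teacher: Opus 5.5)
Your proposal is correct and follows the paper's proof essentially step for step. You rewrite the integral as \(d_\mu h_q^{(\nu)}\mathfrak a_q^{(\nu)}(g)\) with \(\nu=\mu+\frac12\) and \(q=n-\mu\), absorb the case \(q=0\) into the constant, show \(\mathcal N_{n\mu}h_q^{(\nu)}q^{1-\nu}=O(1)\), and absorb \(\rho^\mu\). The only cosmetic differences are that you use \(n\asymp q\) where the paper uses \(q\le n\) together with \(\mu-\frac12>0\), and that your fallback with a larger \(\rho'\) is never needed.
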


\begin{proof}
Set
\[
 \nu=\mu+\frac12,
 \qquad
 q=n-\mu.
\]
By \eqref{eq:associated-Legendre-Gegenbauer},
\[
 P_n^\mu(x)
 =
 d_\mu(1-x^2)^{\mu/2}C_q^{(\nu)}(x).
\]
Since
\[
 \nu-\frac12=\mu,
\]
the coefficient definition
\eqref{eq:Gegenbauer-coefficient} gives
\begin{equation}
\label{eq:app-Legendre-projection-as-Gegenbauer}
 \int_{-1}^{1}
 g(x)P_n^\mu(x)(1-x^2)^{\mu/2}\,\dd x
 =
 d_\mu h_q^{(\nu)}
 \mathfrak a_q^{(\nu)}(g).
\end{equation}

The case \(q=0\), equivalently \(n=\mu\), is covered by enlarging the
constant. We may therefore assume \(q\ge1\).

For fixed \(\mu\),
\[
 \frac{(n-\mu)!}{(n+\mu)!}
 =
 \left(
 \prod_{r=-\mu+1}^{\mu}(n+r)
 \right)^{-1}
 =
 O(n^{-2\mu}),
\]
where an empty product is interpreted as \(1\). Hence
\begin{equation}
\label{eq:app-normalization-growth}
 \mathcal N_{n\mu}
 =
 \left(
 \frac{2n+1}{4\pi}
 \frac{(n-\mu)!}{(n+\mu)!}
 \right)^{1/2}
 =
 O\bigl(n^{\frac12-\mu}\bigr).
\end{equation}

Using \(\nu=\mu+\frac12\) in
\eqref{eq:Gegenbauer-norm}, we obtain
\[
 h_q^{(\nu)}
 =
 \frac{2^{-2\mu}\pi}
 {\Gamma(\mu+\frac12)^2}
 \frac{\Gamma(q+2\mu+1)}
 {\Gamma(q+1)(q+\mu+\frac12)}.
\]
Since
\[
 \frac{\Gamma(q+2\mu+1)}{\Gamma(q+1)}
 =
 \prod_{r=1}^{2\mu}(q+r)
 =
 O(q^{2\mu}),
\]
it follows that
\begin{equation}
\label{eq:app-Gegenbauer-norm-growth}
 h_q^{(\nu)}
 =
 O(q^{2\mu-1}).
\end{equation}

Because
\[
 1-\nu=\frac12-\mu,
\]
the algebraic factor obtained from
\eqref{eq:app-Wang-reduced} satisfies
\[
 \mathcal N_{n\mu}
 h_q^{(\nu)}
 q^{1-\nu}
 =
 O\left(
 n^{\frac12-\mu}
 q^{\mu-\frac12}
 \right).
\]
If \(\mu=0\), then \(q=n\). If \(\mu\ge1\), then
\(0<q/n\le1\) and \(\mu-\frac12>0\). Thus
\begin{equation}
\label{eq:app-algebraic-factor-bounded}
 \mathcal N_{n\mu}
 h_q^{(\nu)}
 q^{1-\nu}
 =
 O(1).
\end{equation}

Because \(d_\mu\) is fixed, combining
\eqref{eq:app-Legendre-projection-as-Gegenbauer},
\eqref{eq:app-Wang-reduced}, and
\eqref{eq:app-algebraic-factor-bounded} gives
\[
 \mathcal N_{n\mu}
 \left|
 \int_{-1}^{1}
 g(x)P_n^\mu(x)(1-x^2)^{\mu/2}\,\dd x
 \right|
 \le
 C_{g,\mu,\rho}\rho^{-q}.
\]
Finally,
\[
 \rho^{-q}
 =
 \rho^\mu\rho^{-n},
\]
and the fixed factor \(\rho^\mu\) is absorbed into the constant.
\end{proof}

%% ------------------------------------------------------------
%% Subsection: Analyticity of the transformed profile
%% ------------------------------------------------------------
\subsection*{Analyticity of the transformed profile}

We now identify the Bernstein ellipses on which the transformed profile is
holomorphic. By
\eqref{eq:H-factor-main}--\eqref{eq:H-def-main},
\[
 P_\ell^\mu\bigl(\widetilde x_\beta(x)\bigr)
 =
 (1-x^2)^{\mu/2}H_{\ell,\mu,\beta}(x),
\]
where
\[
 H_{\ell,\mu,\beta}(x)
 =
 \sum_{k=0}^{R_{\ell,\mu}}
 \gamma_{\ell,\mu,k}
 \beta^{\ell-\mu-2k}
 x^{\ell-\mu-2k}
 \bigl(1+(\beta^2-1)x^2\bigr)^{-(\ell-2k)/2}.
\]
\begin{lemma}
\label{lem:app-H-analyticity}
Let \(\beta>0\), \(\beta\neq1\), and set
\[
 \rho_\beta
 :=
 \sqrt{\frac{1+\beta}{|1-\beta|}}.
\]
Then \(H_{\ell,\mu,\beta}\) extends holomorphically to a neighbourhood of
\(\mathcal E_\rho\) for every \(1<\rho<\rho_\beta\).
\end{lemma}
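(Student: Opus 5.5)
The plan is to treat $H_{\ell,\mu,\beta}$ as a finite sum of products of a monomial $x^{\ell-\mu-2k}$ and a power $\bigl(1+(\beta^2-1)x^2\bigr)^{-(\ell-2k)/2}$. The monomials are entire, so only the second factor needs attention. Its exponent may be a half-integer, so I will write it as $\bigl(q(z)^{1/2}\bigr)^{-(\ell-2k)}$ with $q(z):=1+(\beta^2-1)z^2$. I will use the principal branch of the square root, which is holomorphic and nonvanishing on $\Cbb\setminus(-\infty,0]$. Consequently each summand, and hence $H_{\ell,\mu,\beta}$, is holomorphic on the open set
\[
 \Omega_\beta:=\{z\in\Cbb:\ q(z)\notin(-\infty,0]\}.
\]
On $[-1,1]$ we have $q(x)>0$, so this extension agrees with the real profile in \eqref{eq:H-def-main}. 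The whole lemma therefore reduces to the geometric claim $\mathcal E_\rho\subset\Omega_\beta$ for $1<\rho<\rho_\beta$. Since $\mathcal E_\rho$ is compact and $\Omega_\beta$ is open, this inclusion automatically gives an open neighbourhood of $\mathcal E_\rho$ on which the extension is holomorphic.

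Next I will describe the complement $\Cbb\setminus\Omega_\beta$, i.e.\ the set where $q(z)\in(-\infty,0]$. Set $c=\beta^2-1\neq0$.
\begin{itemize}
\item If $0<\beta<1$, then $c<0$, and $q(z)\le 0$ forces $z^2$ to be real with $z^2\ge 1/|c|$. So the excluded set is the pair of real rays $|x|\ge x_0$, where $x_0=(1-\beta^2)^{-1/2}>1$.
\item If $\beta>1$, then $c>0$, and $q(z)\le0$ forces $z^2\le -1/c$. So the excluded set is the pair of imaginary rays $z=iy$ with $|y|\ge y_0$, where $y_0=(\beta^2-1)^{-1/2}$.
\end{itemize}

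Finally I will compare these rays with the Bernstein ellipses. The ellipse $\partial\mathcal E_\rho$ has real semi-axis $a_\rho=\tfrac12(\rho+\rho^{-1})$ and imaginary semi-axis $b_\rho=\tfrac12(\rho-\rho^{-1})$. Both are strictly increasing in $\rho$, and the regions $\mathcal E_\rho$ are nested.
\begin{itemize}
\item For $\beta<1$, the ray tip $x_0$ satisfies $a_{\rho_0}=x_0$ with $\rho_0=x_0+\sqrt{x_0^2-1}=(1+\beta)/\sqrt{1-\beta^2}=\sqrt{(1+\beta)/(1-\beta)}=\rho_\beta$.
\item For $\beta>1$, the tip $iy_0$ satisfies $b_{\rho_0}=y_0$ with $\rho_0=y_0+\sqrt{y_0^2+1}=(1+\beta)/\sqrt{\beta^2-1}=\rho_\beta$.
\end{itemize}
In both cases the excluded rays lie on the axis $\mathcal E_\rho$ meets only within distance $a_\rho$ (resp.\ $b_\rho$) of the origin, and they begin at distance $a_{\rho_\beta}>a_\rho$ (resp.\ $b_{\rho_\beta}>b_\rho$). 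Hence for $\rho<\rho_\beta$ the compact set $\mathcal E_\rho$ lies in $\Omega_\beta$.

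The only step needing care is this identification of the critical ellipse parameter with $\rho_\beta$, together with the choice of a branch that is single-valued off exactly those rays. Squaring $q^{1/2}$ instead would lose the half-integer powers, which is why the cut must be placed as described. I expect the rest to be routine.
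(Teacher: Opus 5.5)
Your proof is correct and has the same overall structure as the paper's. In both, the only obstruction is $q_\beta(z)=1+(\beta^2-1)z^2$, and $\rho_\beta$ comes from inverting the Joukowski map at the zeros $\pm(1-\beta^2)^{-1/2}$ (resp.\ $\pm i(\beta^2-1)^{-1/2}$), exactly as you compute.

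The difference is how the square root is obtained.
\begin{itemize}
\item The paper only locates the zeros of $q_\beta$ on $\partial\mathcal E_{\rho_\beta}$ and picks an intermediate $\rho<\rho_*<\rho_\beta$. It then uses the general fact that a nonvanishing holomorphic function on the simply connected domain $\operatorname{int}\mathcal E_{\rho_*}$ has a holomorphic square root $s_\beta$. The normalization $s_\beta(0)=1$ makes it agree with the positive root on $[-1,1]$.
\item You use the principal branch instead. This forces you to find the full preimage $q_\beta^{-1}\bigl((-\infty,0]\bigr)$, not just $q_\beta^{-1}(0)$. You do so correctly: it is two rays along the same axis as the zeros, starting at the zeros.
\end{itemize}

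What each route buys:
\begin{itemize}
\item Your route gives an explicit branch and a single open set $\Omega_\beta$, independent of $\rho$, on which the extension is holomorphic. You need neither the auxiliary $\rho_*$ nor the square-root theorem on simply connected domains.
\item The paper's route is shorter. It would apply verbatim to any profile whose singularities are zeros of a polynomial raised to a fractional power, as long as those zeros lie outside the ellipse, even when the preimage of the branch cut is not a tidy pair of rays.
\end{itemize}
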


\begin{proof}
Set
\[
 q_\beta(z):=1+(\beta^2-1)z^2.
\]
Possible singularities in the finite representation above occur only at the
zeros of \(q_\beta\).

The zeros satisfy
\[
 z^2=(1-\beta^2)^{-1}.
\]
If \(0<\beta<1\), the positive zero is
\(z_0=(1-\beta^2)^{-1/2}\). Its exterior preimage under
\(z=\frac12(w+w^{-1})\) is
\[
 w_0
 =
 z_0+\sqrt{z_0^2-1}
 =
 \sqrt{\frac{1+\beta}{1-\beta}}.
\]
If \(\beta>1\), one zero is
\(z_0=i(\beta^2-1)^{-1/2}\). The corresponding exterior preimage satisfies
\[
 |w_0|
 =
 \frac{1}{\sqrt{\beta^2-1}}
 +
 \sqrt{1+\frac{1}{\beta^2-1}}
 =
 \sqrt{\frac{1+\beta}{\beta-1}}.
\]
Thus the zeros of \(q_\beta\) lie on the Bernstein ellipse with parameter
\begin{equation}
\label{eq:rho-beta-from-ellipse}
 \rho_\beta
 =
 \sqrt{\frac{1+\beta}{|1-\beta|}}.
\end{equation}

Fix \(1<\rho<\rho_\beta\) and choose
\(\rho<\rho_*<\rho_\beta\). The function \(q_\beta\) is holomorphic and
nonvanishing on the simply connected domain
\(\operatorname{int}\mathcal E_{\rho_*}\). Hence it has a holomorphic
square root; see \cite[Ch.~13]{Rudin1987}. We choose \(s_\beta\) such that
\[
 s_\beta(z)^2=q_\beta(z),
 \qquad
 s_\beta(0)=1.
\]
For \(x\in[-1,1]\), we have \(q_\beta(x)>0\). Continuity and the
normalization at \(0\) therefore imply
\[
 s_\beta(x)=\sqrt{q_\beta(x)}.
\]
We define
\[
 q_\beta(z)^{-(\ell-2k)/2}
 :=
 s_\beta(z)^{-(\ell-2k)}.
\]
With this definition, the finite formula above gives a holomorphic extension
of \(H_{\ell,\mu,\beta}\) to
\(\operatorname{int}\mathcal E_{\rho_*}\). Since
\(\mathcal E_\rho\subset\operatorname{int}\mathcal E_{\rho_*}\), this
extension is holomorphic in a neighbourhood of \(\mathcal E_\rho\).
\end{proof}

\begin{proof}[Proof of Theorem~\ref{thm:fixed-mode-decay}]
Let \(1<\rho<\rho_\beta\). By
Lemma~\ref{lem:app-H-analyticity},
\(H_{\ell,\mu,\beta}\) is holomorphic in a neighbourhood of
\(\mathcal E_\rho\). Substituting \eqref{eq:H-factor-main} into
\eqref{eq:B-integral-S2} gives
\[
 B_n^{(\ell,m)}(\beta)
 =
 2\pi\mathcal N_{\ell\mu}\mathcal N_{n\mu}
 \int_{-1}^{1}
 H_{\ell,\mu,\beta}(x)
 P_n^\mu(x)(1-x^2)^{\mu/2}\,\dd x.
\]
The factor \(2\pi\mathcal N_{\ell\mu}\) is independent of \(n\).
Applying Corollary~\ref{cor:app-associated-Legendre-projection} with
\(g=H_{\ell,\mu,\beta}\) gives
\[
 \begin{aligned}
 \bigl|B_n^{(\ell,m)}(\beta)\bigr|
 &\le
 2\pi\mathcal N_{\ell\mu}
 C_{H_{\ell,\mu,\beta},\mu,\rho}\rho^{-n}
 \\
 &=
 C_{\ell,m,\beta,\rho}\rho^{-n},
 \qquad n\ge\mu,
 \end{aligned}
\]
where \(C_{\ell,m,\beta,\rho}\) is independent of \(n\). This proves
\eqref{eq:fixed-mode-geometric-decay}.

Taking \(n\)-th roots gives
\[
 \bigl|B_n^{(\ell,m)}(\beta)\bigr|^{1/n}
 \le
 C_{\ell,m,\beta,\rho}^{1/n}\rho^{-1}.
\]
Since \(C_{\ell,m,\beta,\rho}^{1/n}\to1\),
\[
 \limsup_{n\to\infty}
 \bigl|B_n^{(\ell,m)}(\beta)\bigr|^{1/n}
 \le
 \rho^{-1}.
\]
Since \(\rho\in(1,\rho_\beta)\) was arbitrary, we may let
\(\rho\uparrow\rho_\beta\). This gives
\[
 \limsup_{n\to\infty}
 \bigl|B_n^{(\ell,m)}(\beta)\bigr|^{1/n}
 \le
 \rho_\beta^{-1}
 =
 \sqrt{\frac{|1-\beta|}{1+\beta}}.
\]
This proves \eqref{eq:limsup-decay}.
\end{proof}

%% ============================================================
%% Section: Proof of Sobolev stability
%% ============================================================
\section{Proof of Sobolev stability}
\label{app:native-transport-proof}

On compact manifolds, Sobolev stability under smooth diffeomorphisms follows
from the local definition of Sobolev spaces and the invariance of these spaces
under smooth coordinate changes; see \cite[Ch.~4]{Taylor2011}. We give a
short atlas argument on \(\Sph^2\) to connect this result with the
Fourier--Laplace Sobolev norm used in this paper. For the equivalent spectral
description on the sphere, see
\cite[Ch.~1]{HubbertLeGiaMorton2015}.

Let \(T:\Sph^2\to\Sph^2\) be a \(C^\infty\)-diffeomorphism, and define
\[
 C_Tf:=f\circ T.
\]

%% ------------------------------------------------------------
%% Subsection: Integer Sobolev orders
%% ------------------------------------------------------------
\subsection{Integer Sobolev orders}
\label{subsec:integer-sobolev-stability-appendix}

We first state the integer-order result in the notation used throughout the
paper.

\begin{theorem}
\label{thm:sobolev-stability-appendix}
For every integer \(k\ge0\), the composition operator \(C_T\) extends
uniquely to a bounded isomorphism on \(H^k(\Sph^2)\). Equivalently, there
exist constants \(A_{k,T},B_{k,T}>0\) such that
\[
 A_{k,T}\|f\|_{H^k(\Sph^2)}
 \le
 \|f\circ T\|_{H^k(\Sph^2)}
 \le
 B_{k,T}\|f\|_{H^k(\Sph^2)}
\]
for every \(f\in H^k(\Sph^2)\).
\end{theorem}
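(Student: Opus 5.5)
The plan is to connect the spectral norm \eqref{eq:sobolev-S2} at integer order \(k\) to a chart-based norm. In that chart norm, composition with \(T\) is just a change of variables in finitely many local \(H^k(\R^2)\) norms.

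\textbf{Step 1: an equivalent chart norm.} Fix a finite atlas of \(\Sph^2\), for instance the six hemispherical projections \(\varphi_j:U_j\to\R^2\). Choose a smooth partition of unity \(\{\chi_j\}\) subordinate to this cover. Define
\[
 \|f\|_{*,k}^2:=\sum_j\bigl\|(\chi_jf)\circ\varphi_j^{-1}\bigr\|_{H^k(\R^2)}^2 .
\]
We show \(\|\cdot\|_{*,k}\asymp\|\cdot\|_{H^k(\Sph^2)}\) in two parts.

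\emph{Spectral to differential.} Since \((1+\Lambda_\ell)^k\) is the symbol of \((1-\Delta_{\Sph^2})^k\), we have \(\|f\|_{H^k}^2=\langle(1-\Delta_{\Sph^2})^kf,f\rangle_{L^2}\) on smooth \(f\). For even \(k\) this equals \(\|(1-\Delta)^{k/2}f\|_{L^2}^2\). For odd \(k=2r+1\) it equals \(\|(1-\Delta)^rf\|^2+\|\nabla(1-\Delta)^rf\|^2\).

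\emph{Differential to chart.} In local coordinates \(\Delta_{\Sph^2}\) is a second-order elliptic operator with smooth coefficients. This gives the upper bound \(\|f\|_{H^k}\lesssim\|f\|_{*,k}\) directly. The lower bound \(\|f\|_{*,k}\lesssim\|f\|_{H^k}\) follows from local elliptic regularity (Gårding/interior estimates), which control all derivatives up to order \(k\) of \(\chi_jf\) by \(\|(1-\Delta)^{\lceil k/2\rceil}\cdots\|\) plus lower-order terms. The lower-order terms are absorbed by induction on \(k\), starting from \(k=0\), where both norms are equivalent to \(L^2\).

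Alternatively, one can cite \cite[Ch.~4]{Taylor2011} and \cite[Ch.~1]{HubbertLeGiaMorton2015} for this equivalence. I expect this identification to be the main obstacle; everything else is routine.

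\textbf{Step 2: bound composition in the chart norm.} Work with \(f\in C^\infty(\Sph^2)\), which is dense in \(H^k\) (truncated harmonic expansions). Write
\[
 (\chi_j (f\circ T))\circ\varphi_j^{-1}=\sum_i \bigl(\chi_j\cdot(\chi_i f)\circ T\bigr)\circ\varphi_j^{-1}.
\]
Refine the atlas if necessary so that each \(T(\operatorname{supp}\chi_j)\) meets charts on which \(\varphi_i\circ T\circ\varphi_j^{-1}\) is a smooth diffeomorphism between open sets of \(\R^2\). All derivatives of this map, and of its Jacobian inverse, are bounded on the compact support.

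The chain rule (Faà di Bruno) expresses derivatives up to order \(k\) of \(g\circ\psi\) as finite sums of derivatives of \(g\) of order \(\le k\), composed with \(\psi\), times bounded smooth coefficients. Changing variables in the \(L^2\) integral costs only a bounded Jacobian factor. Multiplying by the smooth cutoffs \(\chi_j\) is also bounded on \(H^k(\R^2)\) by the Leibniz rule.

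Summing over the finitely many pairs \((i,j)\) gives \(\|f\circ T\|_{*,k}\le B'\|f\|_{*,k}\).

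\textbf{Step 3: conclude.} By Step 1, the bound of Step 2 transfers to the spectral norm, giving the upper constant \(B_{k,T}\). By density, \(C_T\) extends uniquely to a bounded operator on \(H^k(\Sph^2)\). Applying the same argument to the smooth diffeomorphism \(T^{-1}\) gives \(\|f\|_{H^k}=\|(f\circ T)\circ T^{-1}\|_{H^k}\le B_{k,T^{-1}}\|f\circ T\|_{H^k}\). This yields \(A_{k,T}=B_{k,T^{-1}}^{-1}\) and shows that \(C_{T^{-1}}\) is the bounded inverse.
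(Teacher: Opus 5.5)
Your proposal is correct. Its skeleton matches the paper's: a chart norm equivalent to the spectral norm, boundedness of composition on smooth functions, density, and $T^{-1}$ for the reverse bound. The difference lies in how Step~2 is carried out.

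The paper never differentiates a composition. It transports the atlas by $T$, setting $\widetilde\kappa_j=\kappa_j\circ T$ and $\widetilde\chi_j=\chi_j\circ T$. Then the local representatives of $f\circ T$ in the new atlas coincide with those of $f$ in the old one, and $\|f\circ T\|_{H^k(\Sph^2),\widetilde{\mathcal A}}=\|f\|_{H^k(\Sph^2),\mathcal A}$ holds exactly. All the analytic content is delegated to the cited fact that every finite smooth atlas with a subordinate partition of unity gives a norm equivalent to the spectral one.

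You instead keep a single atlas and prove the needed comparison by hand, using Fa\`a di Bruno, the change of variables and the Leibniz rule for the maps $\varphi_i\circ T\circ\varphi_j^{-1}$. This is exactly the computation hidden inside that atlas-independence result. The paper's route is shorter and conceptually cleaner. Yours is more self-contained: it needs the spectral/chart equivalence for only one fixed atlas, and it makes the dependence of $B_{k,T}$ on $T$ explicit through $C^k$ bounds on these transition maps and a lower bound on their Jacobians.

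Two small remarks. First, no refinement of the atlas is needed in Step~2. The support of $\chi_j\cdot(\chi_if)\circ T$ is a compact subset of $U_j\cap T^{-1}(U_i)$, and there $\varphi_i\circ T\circ\varphi_j^{-1}$ is already a diffeomorphism with bounded derivatives and Jacobian bounded away from zero. Second, in your elliptic-regularity sketch, $(1-\Delta_{\Sph^2})^{\lceil k/2\rceil}$ has order $k+1$ when $k$ is odd. There you should use your own odd-case quantity $\|(1-\Delta_{\Sph^2})^rf\|^2+\|\nabla(1-\Delta_{\Sph^2})^rf\|^2$, or simply rely on the citation as the paper does.
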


\begin{proof}
We first prove the estimate for \(f\in C^\infty(\Sph^2)\). Choose a finite
smooth atlas and a subordinate smooth partition of unity as in
Lee~\cite[Ch.~1--2]{Lee2003},
\[
 \mathcal A
 =
 \{(U_j,\kappa_j,\chi_j)\}_{j=1}^N,
 \qquad
 \kappa_j:U_j\to
 \Omega_j:=\kappa_j(U_j)\subset\mathbb R^2,
\]
with \(\operatorname{supp}\chi_j\subset U_j\). For
\(k\in\mathbb N_0\), define
\begin{equation}
\label{eq:atlas-norm-D}
 \|f\|_{H^k(\Sph^2),\mathcal A}^2
 :=
 \sum_{j=1}^N
 \left\|
 (\chi_jf)\circ\kappa_j^{-1}
 \right\|_{H^k(\Omega_j)}^2.
\end{equation}
This is the standard localized Sobolev norm on a compact manifold. Smooth
coordinate changes preserve Sobolev norms up to constant factors. Therefore,
different finite smooth atlases and subordinate partitions of unity define
equivalent norms; see \cite[Ch.~4]{Taylor2011}.

On \(\Sph^2\), the Sobolev norm can also be expressed through the
spherical-harmonic coefficients, as in \eqref{eq:sobolev-S2}; see
\cite[Ch.~1]{HubbertLeGiaMorton2015}. This spectral norm is equivalent to
the localized atlas norm. Hence, for the chosen atlas \(\mathcal A\),
there exist constants
\(c_{\mathcal A,k},C_{\mathcal A,k}>0\) such that
\begin{equation}
\label{eq:atlas-spectral-equivalence-D}
 c_{\mathcal A,k}\|f\|_{H^k(\Sph^2)}
 \le
 \|f\|_{H^k(\Sph^2),\mathcal A}
 \le
 C_{\mathcal A,k}\|f\|_{H^k(\Sph^2)}.
\end{equation}
Transport the atlas by \(T\):
\[
 V_j:=T^{-1}(U_j),
 \qquad
 \widetilde\kappa_j:=\kappa_j\circ T,
 \qquad
 \widetilde\chi_j:=\chi_j\circ T.
\]
Since \(T\) is a diffeomorphism,
\[
 \widetilde{\mathcal A}
 :=
 \{(V_j,\widetilde\kappa_j,\widetilde\chi_j)\}_{j=1}^N
\]
is again a finite smooth atlas with a subordinate smooth partition of
unity. Moreover,
\begin{equation}
\label{eq:transported-chart-inverse-D}
 \widetilde\kappa_j^{-1}
 =
 T^{-1}\circ\kappa_j^{-1},
 \qquad
 \widetilde\kappa_j(V_j)=\Omega_j.
\end{equation}
For every \(j\), the corresponding local representatives satisfy
\begin{align}
\label{eq:local-rep-identity-D}
 (\widetilde\chi_j(f\circ T))
 \circ
 \widetilde\kappa_j^{-1}
 &=
 \bigl((\chi_j\circ T)(f\circ T)\bigr)
 \circ T^{-1}\circ\kappa_j^{-1}
 \notag\\
 &=
 (\chi_jf)\circ\kappa_j^{-1}.
\end{align}
Hence the localized norms agree exactly:
\begin{equation}
\label{eq:exact-atlas-norm-identity-D}
 \|f\circ T\|_{H^k(\Sph^2),\widetilde{\mathcal A}}
 =
 \|f\|_{H^k(\Sph^2),\mathcal A}.
\end{equation}

We first apply \eqref{eq:atlas-spectral-equivalence-D} to
\(\widetilde{\mathcal A}\). We then use
\eqref{eq:exact-atlas-norm-identity-D} and apply the same norm equivalence
to \(\mathcal A\). This gives
\[
 \begin{aligned}
 \|f\circ T\|_{H^k(\Sph^2)}
 &\le
 c_{\widetilde{\mathcal A},k}^{-1}
 \|f\circ T\|_{H^k(\Sph^2),\widetilde{\mathcal A}}
 \\
 &=
 c_{\widetilde{\mathcal A},k}^{-1}
 \|f\|_{H^k(\Sph^2),\mathcal A}
 \\
 &\le
 c_{\widetilde{\mathcal A},k}^{-1}
 C_{\mathcal A,k}
 \|f\|_{H^k(\Sph^2)}.
 \end{aligned}
\]
Thus \(C_T\) is bounded on \(H^k(\Sph^2)\) for smooth functions.

Applying the same estimate to \(T^{-1}\), with \(f\circ T\) in place of
\(f\), gives the reverse bound. Hence
\[
 \|f\circ T\|_{H^k(\Sph^2)}
 \asymp_{k,T}
 \|f\|_{H^k(\Sph^2)}
\]
for \(f\in C^\infty(\Sph^2)\). By the density of
\(C^\infty(\Sph^2)\) in \(H^k(\Sph^2)\), the operator extends uniquely to
all of \(H^k(\Sph^2)\); see Taylor~\cite[Ch.~4]{Taylor2011}. The same
argument for \(T^{-1}\) shows that the extension is invertible and
\[
 C_T^{-1}=C_{T^{-1}}.
\]
\end{proof}

%% ------------------------------------------------------------
%% Subsection: Fractional Sobolev orders and duality
%% ------------------------------------------------------------
\subsection{Fractional Sobolev orders and duality}
\label{subsec:fractional-sobolev-stability-appendix}

We next extend the integer-order result to arbitrary positive orders by
interpolation.

\begin{corollary}
\label{cor:fractional-sobolev-stability-appendix}
For every \(\tau>0\), composition with \(T\) is a bounded isomorphism on
\(H^\tau(\Sph^2)\).
\end{corollary}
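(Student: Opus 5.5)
The plan is to interpolate between the integer orders covered by Theorem~\ref{thm:sobolev-stability-appendix}. The key observation is that the spectral Sobolev spaces \eqref{eq:sobolev-S2} form an exact interpolation scale. For \(\tau>0\), choose integers \(k_0<\tau<k_1\) (e.g.\ \(k_0=\lfloor\tau\rfloor\), \(k_1=k_0+1\)) and write \(\tau=(1-\theta)k_0+\theta k_1\) with \(\theta\in(0,1)\); integer \(\tau\) is already covered. Each \(H^s(\Sph^2)\) is unitarily equivalent, via \(f\mapsto(\widehat f_{\ell m})\), to the weighted sequence space \(\ell^2\) with weights \(w_\ell^{(s)}=(1+\Lambda_\ell)^{s}\). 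For weighted \(\ell^2\) spaces on a common index set, complex interpolation is exact. Indeed, \([\ell^2(w_0),\ell^2(w_1)]_\theta=\ell^2(w_0^{1-\theta}w_1^\theta)\) with equal norms, by the Stein--Weiss theorem for weighted \(L^2\). Since \((1+\Lambda_\ell)^{k_0(1-\theta)+k_1\theta}=(1+\Lambda_\ell)^\tau\), this gives \([H^{k_0}(\Sph^2),H^{k_1}(\Sph^2)]_\theta=H^\tau(\Sph^2)\) isometrically. The same holds for the real method \((\cdot,\cdot)_{\theta,2}\), up to equivalent norms, if one prefers that route.

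Next, I apply Theorem~\ref{thm:sobolev-stability-appendix}. It says \(C_T\) is bounded \(H^{k_0}\to H^{k_0}\) with norm \(\le B_{k_0,T}\) and bounded \(H^{k_1}\to H^{k_1}\) with norm \(\le B_{k_1,T}\). These are compatible, being restrictions of the same operator on \(H^{k_0}\supset H^{k_1}\): the \(H^{k_1}\)-extension agrees with the \(H^{k_0}\)-extension because both are continuous extensions from the dense subspace \(C^\infty(\Sph^2)\). The interpolation theorem then gives \(\|C_Tf\|_{H^\tau}\le B_{k_0,T}^{1-\theta}B_{k_1,T}^{\theta}\|f\|_{H^\tau}\). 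Running the identical argument with \(T^{-1}\) in place of \(T\) shows that \(C_{T^{-1}}\) is bounded on \(H^\tau(\Sph^2)\). Since \(C_TC_{T^{-1}}=C_{T^{-1}}C_T=\mathrm{id}\) on \(C^\infty(\Sph^2)\), density extends these identities to \(H^\tau(\Sph^2)\). Hence \(C_T\) is a bounded isomorphism with inverse \(C_{T^{-1}}\), and the lower bound \(\|f\circ T\|_{H^\tau}\ge \|C_{T^{-1}}\|^{-1}\|f\|_{H^\tau}\) follows.

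The main obstacle is justifying the interpolation identity for the spectral scale cleanly. I would handle it directly rather than cite it: the map \(f\mapsto(\widehat f_{\ell m})\) is simultaneously an isometric isomorphism onto weighted \(\ell^2\) for all \(s\). Stein--Weiss for a discrete measure then applies verbatim. Alternatively, the three-lines argument can be run explicitly on the analytic family \(F(z)=\sum(1+\Lambda_\ell)^{(\tau-k_0-(k_1-k_0)z)/2}\widehat f_{\ell m}Y_{\ell m}\). A minor point to check is that the paper's integer-order theorem uses the same spectral norm. It does, by \eqref{eq:atlas-spectral-equivalence-D}, so no further norm comparison is needed.
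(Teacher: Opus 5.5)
Your proposal is correct and follows the paper's route: you interpolate the integer-order bounds of Theorem~\ref{thm:sobolev-stability-appendix}, then repeat the argument with $T^{-1}$ to get the inverse. The paper takes endpoints $H^0(\Sph^2)$ and $H^r(\Sph^2)$ with $\theta=\tau/r$ and cites Taylor for the interpolation identity, whereas you use adjacent integers, verify the identity directly from the spectral weights via Stein--Weiss, and spell out the compatibility and density steps that the paper leaves implicit.
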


\begin{proof}
Choose an integer \(r>\tau\) and set \(\theta:=\tau/r\). The interpolation
identity for Sobolev spaces on compact manifolds gives
\[
[H^0(\Sph^2),H^r(\Sph^2)]_\theta
=
H^{\theta r}(\Sph^2)
=
H^\tau(\Sph^2);
\]
see Taylor~\cite[Ch.~4]{Taylor2011}.

By Theorem~\ref{thm:sobolev-stability-appendix}, \(C_T\) is bounded on
both endpoint spaces. Interpolation therefore gives boundedness on
\(H^\tau(\Sph^2)\). Applying the same argument to \(T^{-1}\) gives the
bounded inverse \(C_{T^{-1}}\).
\end{proof}

We use the following duality consequence in
Corollary~\ref{cor:sobolev-jones-bound}.

\begin{remark}
\label{rem:dual-norm-comparison}
Sobolev duality gives
\[
\bigl(H^\tau(\Sph^2)\bigr)^*
\equiv
H^{-\tau}(\Sph^2);
\]
see \cite[Ch.~4]{Taylor2011}. Consequently, if \(X\) is
\(H^\tau(\Sph^2)\) equipped with an equivalent norm, written as
\[
X\equiv H^\tau(\Sph^2),
\]
then
\[
X^*\equiv H^{-\tau}(\Sph^2).
\]
Equivalently, there are constants \(c_*,C_*>0\) such that
\[
c_*\|F\|_{H^{-\tau}(\Sph^2)}
\le
\|F\|_{X^*}
\le
C_*\|F\|_{H^{-\tau}(\Sph^2)}
\]
for every \(F\in H^{-\tau}(\Sph^2)\).
\end{remark}

%% ------------------------------------------------------------
%% End of document
%% ------------------------------------------------------------
\end{document}